\theoremstyle{plain}
\newtheorem{theorem}{Theorem}[section]
\theoremstyle{plain}
\newtheorem{proposition}{Proposition}[subsection]
\newtheorem{lemma}[proposition]{Lemma}
\newtheorem{corollary}[proposition]{Corollary}
\theoremstyle{definition}
\newtheorem{definition}{Definition}[subsection]
\theoremstyle{remark}
\newtheorem{remark}{Remark}
\newtheorem{claim}{Claim}
\declaretheorem[name=Acknowledgements,numbered=no]{ack}
\newcommand{\vertiii}[1]{{\left\vert\kern-0.25ex\left\vert\kern-0.25ex\left\vert #1 
    \right\vert\kern-0.25ex\right\vert\kern-0.25ex\right\vert}}
\def\e{\epsilon}
\def\R{{\mathbb R}}
\def\N{{\mathbb N}}
\def\M{{\mathcal M}}
\def\E{{\mathcal E}}
\def\F{{\mathcal F}}
\def\G{{\mathcal G}}
\def\L{{\mathscr L}}
\def\vphi{{\varphi}}
\def\T{{\bold T}}
\begin{document}

\title[The Vlasov--Poisson system with a trapping potential]{Small data solutions for the Vlasov--Poisson system with a trapping potential}

\author[Anibal Velozo Ruiz]{Anibal Velozo Ruiz} \address{Facultad de matem\'atica, Pontificia Universidad Cat\'olica de Chile, Avenida Vicu\~na Mackenna 4860, Santiago, Chile.}
\email{apvelozo@mat.uc.cl}

\author[Renato Velozo Ruiz]{Renato Velozo Ruiz} \address{Laboratory Jacques-Louis Lions (LJLL), University Pierre and Marie Curie (Paris 6), 4
place Jussieu, 75252 Paris, France.}
\email{ravelozor@gmail.com}

\begin{abstract}
In this paper, we study small data solutions for the Vlasov--Poisson system with the simplest external potential, for which unstable trapping holds for the associated Hamiltonian flow. We prove sharp decay estimates in space and time for small data solutions to the Vlasov--Poisson system with the unstable trapping potential $\frac{-|x|^2}{2}$ in dimension two or higher. The proofs are obtained through a commuting vector field approach. We exploit the uniform hyperbolicity of the Hamiltonian flow, by making use of the commuting vector fields contained in the stable and unstable invariant distributions of phase space for the linearized system. In dimension two, we make use of modified vector field techniques due to the slow decay estimates in time. Moreover, we show an explicit teleological construction of the trapped set in terms of the non-linear evolution of the force field.
\end{abstract}

\maketitle

\setcounter{tocdepth}{1}
\tableofcontents

\section{Introduction}\label{introduction_small_data_unstable_potential}
In this paper, we study the evolution in time of collisionless many-particle systems on $\R^n$, which are described statistically by a distribution function on phase space that satisfies a non-linear PDE system motivated by kinetic theory. More precisely, we investigate the non-linear dynamics of solutions $f(t,x,v)$ to the \emph{Vlasov--Poisson system with an external potential $\Phi(x)$}; given by 
\begin{equation}\label{vlasov_poisson_external_potential}
\begin{cases}
\partial_t f+v\cdot\nabla_xf-(\nabla_x\Phi+\mu\nabla_x\phi) \cdot \nabla_vf=0,\\ 
\Delta_x \phi=\rho(f),\\
\rho(f)(t,x):=\int_{\R^d}f(t,x,v)dv,\\
f(t=0,x,v)=f_0(x,v),
\end{cases}
\end{equation}
where $t\in [0,\infty)$, $x\in \R^n_x$, $v\in \R^n_v$, and $\mu\in \{1,-1\}$ is a fixed constant. According to the value of $\mu$, the interaction between the particles of the system is either \emph{attractive} (when $\mu=1$), or \emph{repulsive} (when $\mu=-1$). The nonlinearity of this classical kinetic PDE system arises from the mean field generated by the many-particle system, through the gradient of the solution to the Poisson equation, which is determined in terms of the so-called \emph{spatial density} $\rho(f)$, defined by integrating the distribution function in the velocity variables. 

The Vlasov--Poisson system with an external potential $\Phi$, describes a collisionless many-particle system for which the trajectories described by its particles are set by the mean field generated by the many-particle system, and an external potential $\Phi$ motivated by specific considerations of the problem at hand. External potentials have been previously used in the literature to study collisional and collisionless many-particle systems in kinetic theory \cite{HN04, He07, DMS09,D11, DL12, DMS15, CDHMM21, CL21}. The Vlasov--Poisson system with an external potential is motivated by the classical \emph{Vlasov--Poisson system}, given precisely by the Vlasov--Poisson system with a vanishing external potential. 

The Vlasov--Poisson system was originally introduced for the study of galactic dynamics by Jeans \cite{Je15}, when the interaction between the particles of the system is attractive ($\mu=1$). In this setup, the field $\nabla_x\phi$ is also known as the \emph{gravitational field}. Independently, the Vlasov--Poisson system was introduced for the study of plasma physics by Vlasov \cite{Vla68}, when the interaction between the particles of the system is repulsive ($\mu=-1$). In this setup, the field $\nabla_x\phi$ is also known as the \emph{electric field}. We note that in the plasma physics case, the many-particle system (\ref{vlasov_poisson_external_potential}) is composed by a single species of particles without global neutrality. The field $\nabla_x\phi$ for the Vlasov--Poisson system with an external potential has the same meaning in both the attractive and the repulsive case. Subsequently, the Vlasov--Poisson system has been widely used to research collisionless many-particle systems in astrophysics \cite{BT11} and plasma physics \cite{LP81}.

The Vlasov--Poisson system is a non-linear transport--elliptic type PDE system whose rich dynamics have been extensively studied in the scientific literature. The first well-posedness result for this PDE system was obtained by Okabe and Ukai \cite{OU78} who proved global well-posedness in dimension two and local well-posedness in dimension three. Later in time, a large class of non-trivial stationary solutions for this system were constructed  \cite{BFH86, RR00, BT11}. Seminal independent works by Pfaffelmoser \cite{Pf92} and Lions--Perthame \cite{LP91} of the early nineties proved \emph{global well-posedness} for the Vlasov--Poisson system in dimension three (see also Schaeffer's proof \cite{Sch91}). These global well-posedness results can be adapted to incorporate an external potential $\Phi(x)$, as long as $\nabla_x\Phi$ has Lipschitz regularity (see the introduction of \cite{GHK12}). However, the description of the non-linear dynamics of solutions to the Vlasov--Poisson system for arbitrary finite energy data is not yet fully understood. Nonetheless, non-linear perturbative stability results for stationary solutions of this PDE system have been proved. Orbital stability under spherically symmetric perturbations has been proved for several non-increasing spherically symmetric stationary solutions \cite{Sch04,SS06,GR07, GL08,LMR08,LMR11}. We stress the work by Lemou, M\'ehats, and Rapha\"{e}l \cite{LMR12}, who proved orbital stability under \emph{arbitrary perturbations} for a large class of non-increasing spherically symmetric stationary solutions previously considered in the literature. We also comment on the asymptotic stability of a point charge for the repulsive Vlasov--Poisson system in dimension three by Pausader, Widmayer, and Yang \cite{PWY22}. 

The first asymptotic stability result for solutions to the Vlasov--Poisson system was obtained by Bardos and Degond \cite{BD85}, who studied the evolution in time of small data solutions for the Vlasov--Poisson system for compactly supported initial data, using the method of characteristics. Later on, this small data global existence result for the Vlasov--Poisson system was improved by Hwang, Rendall and Vel\'asquez \cite{HRV11} who proved optimal time (but not spatial) decay estimates for higher order derivatives of the spatial density for compactly supported data, again using the method of characteristics. More recently, the stability of the vacuum solution for the Vlasov--Poisson system à la Bardos--Degond was revisited by Smulevici \cite{Sm16}, who proved stability based upon energy estimates using a vector field method. As a result, Smulevici \cite{Sm16} obtained propagation in time of a global energy bound, in terms of commuted Vlasov fields associated with conservation laws of the free transport operator, and optimal space and time decay estimates for the spatial density induced by the distribution function. Later Duan \cite{Du22} simplified the functional framework used to prove the stability of the vacuum solution for the Vlasov--Poisson system in \cite{Sm16}. See \cite{IPWW} for another proof of the stability of vacuum using methods coming from dispersive PDEs.

In this paper, we are interested in stability results for \emph{dispersive} collisionless many-particle systems for which the dynamics described by the particles of the system are \emph{hyperbolic}. Motivated by this class of many-particle systems, we consider the Vlasov--Poisson system with the simplest external potential, for which \emph{unstable trapping} is expected to hold for the Hamiltonian flow associated to small data solutions of this system. For the purposes of this paper, we say that \emph{unstable trapping} holds for a Hamiltonian flow in $\R^n_x\times \R^n_v$ if the trajectories of the flow escape to infinity for every point in phase space, except for a non-trivial set of measure zero for which the future of every trajectory of the flow is always bounded. More precisely, we study the non-linear dynamics of small data solutions for the Vlasov--Poisson system with the external potential $\frac{-|x|^2}{2}$. We note that unstable trapping holds trivially for the linear Vlasov equation with the external potential $\frac{-|x|^2}{2}$. As a result, we prove asymptotic stability for small data solutions to the Vlasov--Poisson systen with the external potential $\frac{-|x|^2}{2}$ in dimension higher or equal to two, by using a commuting vector field method à la Smulevici. 

We investigate this toy model in order to offer insights on the study of stability results for dispersive collisionless many-particle systems for which the associated Hamiltonian flow is hyperbolic. This dispersive behaviour holds locally for 1D Hamiltonian flows arising from potentials with a global maximum in a neighborhood of the associated fixed hyperbolic point. An important example of dispersive collisionless many-particle systems, for which the Hamiltonian flow is hyperbolic, is given by many-particle systems in the exterior of black hole backgrounds which admit a \emph{normally hyperbolic trapped set} \cite{WZ11, D15}.\footnote{We stress the trapped set in the exterior of black hole backgrounds is \emph{eventually absolutely $r$-normally hyperbolic} for every $r$ according to \cite[Chapter 1, Definition 4]{HPS77}.}

\subsection{The main results}
In this manuscript, we investigate the non-linear dynamics of small data solutions for the Vlasov--Poisson system with the external potential $\frac{-|x|^2}{2}$; given by
\begin{equation}\label{vlasov_poisson_unstable_trapping_potential_paper}
\begin{cases}
\partial_t f+v\cdot\nabla_xf+x\cdot \nabla_vf-\mu\nabla_x\phi \cdot \nabla_vf=0,\\
\Delta_x \phi=\rho(f),\\
\rho(f)(t,x):=\int_{\R^d}f(t,x,v)dv,\\
f(t=0,x,v)=f_0(x,v),
\end{cases}
\end{equation}
where $t\in [0,\infty)$, $x\in \R^n_x$, $v\in \R^n_v$, and $\mu\in\{1,-1\}$ is a fixed constant. The local well-posedness theory for this PDE system is standard (see for instance \cite[Section 3]{HK19}). In dimension greater than two, we study the evolution in time of small initial distribution functions $f_0:\R^{n}_x\times \R^{n}_v\to [0,\infty)$, in the energy space defined by a higher order Sobolev norm: $$\mathcal{E}_N[f]:=\sum_{|\alpha|\leq N}\sum_{ Z^{\alpha}\in {\lambda}^{|\alpha|}}\|Z^{\alpha}f\|_{L^1_{x,v}},$$ where $Z^{\alpha}$ are differential operators of order $|\alpha|$, obtained as compositions of vector fields, in a class ${\lambda}$ of commuting vector fields for the linear Vlasov equation with the trapping potential $\frac{-|x|^2}{2}$. This linear Vlasov equation corresponds to the linearization of the Vlasov--Poisson system with the external potential $\frac{-|x|^2}{2}$, with respect to its vacuum solution. See Subsection \ref{subsection_macro_micro_vector_field} for the precise definition of $\lambda$.

\begin{theorem}\label{theorem_stability_vacuum_external_potential}
Let $n\geq 3$ and $N\geq 2n$. There exists $\epsilon_0>0$ such that for all $\epsilon\in (0,\epsilon_0)$, if the initial data $f_0$ for the Vlasov--Poisson system with the trapping potential $\frac{-|x|^2}{2}$ on $\R^n_x\times\R_v^n$ satisfies $\mathcal{E}_N[f_0]\leq \epsilon$. Then, the corresponding solution $f$ for the Vlasov--Poisson system with the trapping potential $\frac{-|x|^2}{2}$ exists globally, and it satisfies the following estimates for every $t\in [0,\infty)$ and $x\in \R^n$:
\begin{enumerate}[label = (\roman*)]
    \item Global energy estimate: 
    $$\mathcal{E}_N[f(t)]\leq 2\epsilon.$$
    \item Decay in space and time of the spatial density for any multi-index $\alpha$ of order $|\alpha|\leq N-n$:
    $$|\rho(Z^{\alpha}f)(t,x)|\leq \dfrac{C_{N,n}\epsilon}{(e^t+|x|)^{n}},$$
    as well as improved decay estimates for its derivatives $$|\partial_x^{\alpha}\rho(f)(t,x)|\leq \dfrac{C_{N,n}\epsilon}{(e^t+|x|)^{n+|\alpha|}},$$ where $C_{N,n}>0$ is a uniform constant depending only on $n$ and $N$.
\end{enumerate}
\end{theorem}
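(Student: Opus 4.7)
The plan is a bootstrap/continuity argument in the spirit of Smulevici, tailored to the hyperbolic linear flow of the inverted harmonic oscillator. Let $\mathsf{T} := \partial_t + v\cdot\nabla_x + x\cdot\nabla_v$ denote the free transport operator, and let $T_\star$ be the supremum of times on which $\mathcal{E}_N[f(t)] \leq 2\e$ holds; the goal is to improve this to $\mathcal{E}_N[f(t)] \leq \tfrac{3}{2}\e$ on $[0,T_\star)$, so that $T_\star=\infty$ follows from local well-posedness and continuation. The first concrete step is to apply each $Z^\alpha\in\lambda^{|\alpha|}$ to the Vlasov equation~(\ref{vlasov_poisson_unstable_trapping_potential_paper}). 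The class $\lambda$ is expected to contain the commuting vector fields $e^{\pm t}(\partial_{x_i}\pm \partial_{v_i})$, arising from the unstable and stable eigenspaces of the linearised Hamiltonian flow, together with symplectic rotations of phase space. Commuting yields
\[
\mathsf{T}(Z^\alpha f) = \mu\,\nabla_x\phi\cdot\nabla_v(Z^\alpha f) + \mu\,[Z^\alpha,\,\nabla_x\phi\cdot\nabla_v]\,f,
\]
and since $\mathsf{T}$ preserves the $L^1_{x,v}$ norm, the problem reduces to controlling the $L^1_{x,v}$ norm of the commutator integrated along characteristics.

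The next step is a Klainerman--Sobolev-type inequality of the form
\[
|\rho(Z^\alpha f)(t,x)| \lesssim \frac{1}{(e^t+|x|)^n}\sum_{|\beta|\leq n}\|Z^{\alpha+\beta}f(t)\|_{L^1_{x,v}}.
\]
The $e^{-nt}$ factor emerges from integrating $f$ in $v$ using the stable generators, which trade $\partial_v$-derivatives for $\partial_x$-derivatives modulo $e^{-t}$ weights; the $|x|^{-n}$ factor emerges from the spatial rotation and scaling fields, which trade powers of $|x|$ for additional $Z$-derivatives. For the force field, the Newton-potential representation in dimension $n\geq 3$ yields
\[
|\partial_x^\alpha \nabla_x\phi(t,x)| \lesssim \int_{\R^n} \frac{|\partial_x^\beta \rho(f)(t,y)|}{|x-y|^{n-1+|\alpha|-|\beta|}}\,dy
\]
for suitable $\beta$; combined with the pointwise bound on $\rho(f)$, this gives exponential $t$-decay and polynomial $|x|$-decay of all needed derivatives of $\nabla_x\phi$, critically with a result that is integrable in $t$. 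Expanding $[Z^\alpha,\nabla_x\phi\cdot\nabla_v]f$ by Leibniz splits the commutator into terms where $Z$-derivatives fall on $\nabla_x\phi$ (pointwise controlled via Newton-potential bounds on $\rho(Z^\beta f)$, which by the previous step decays like $\e(e^t+|x|)^{-n}$) and terms where $Z$-derivatives fall on $\nabla_v f$ (directly controlled in $L^1_{x,v}$ by the bootstrap). The hierarchy $N\geq 2n$ leaves enough regularity to spare for the pointwise estimates. Summing over $|\alpha|\leq N$, integrating in time, and applying Gr\"onwall's inequality with the $L^1_t$-integrable force field gives $\mathcal{E}_N[f(t)] \leq \e + C\e^2 \leq \tfrac{3}{2}\e$ for $\e$ small enough, closing the bootstrap. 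Part (ii) of the theorem then follows a posteriori from the Klainerman--Sobolev step applied to the improved energy.

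The main obstacle I expect is the design of the Klainerman--Sobolev inequality, so that the weights from the stable and unstable vector fields combine with the classical spatial weights to produce the sharp factor $(e^t+|x|)^{-n}$, rather than a weaker product of independent time and space decays; this essentially pins down the precise content of $\lambda$. A closely related difficulty is that the unstable generators carry $e^t$-weights, so their commutators with $\nabla_x\phi\cdot\nabla_v$ are at risk of producing terms whose time integrals diverge. This is precisely where the hypothesis $n\geq 3$ is needed: the Newton potential furnishes a marginal extra power of $(e^t+|x|)^{-1}$ that makes the right-hand side of the energy inequality integrable in time. In $n=2$ this borderline behaviour breaks, which is why the paper's two-dimensional argument requires the modified vector fields alluded to in the abstract.
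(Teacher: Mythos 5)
Your proposal follows essentially the same route as the paper: a bootstrap on $\mathcal{E}_N$, commutation with the stable/unstable/scaling/rotation fields in $\lambda$, a weighted Klainerman--Sobolev inequality producing the $(e^t+|x|)^{-n}$ decay of $\rho$, Newton-kernel estimates for $\nabla_x Z^\gamma\phi$ giving $e^{-(n-1)t}$ decay, and the observation that the $e^t$ weight incurred when rewriting $\partial_v$ in terms of $U_i,S_i$ is offset to leave an $e^{-(n-2)t}$ source, integrable precisely when $n\geq 3$. The paper fills in one detail you gesture at with ``$N\geq 2n$ leaves enough regularity'': the bound on $\|\nabla_x Z^\gamma(\phi)\,Z^\beta f\|_{L^1_{x,v}}$ splits into the cases $|\gamma|\leq N-n$ (pointwise decay of the force via the weighted Sobolev bound for $\rho(|Z^{\gamma'}f|)$) and $|\gamma|>N-n$ (then necessarily $|\beta|\leq N-n$, so the weighted Sobolev is applied to $Z^\beta f$ instead and the force is kept under the integral), but this is exactly the mechanism your remark anticipates.
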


In the two-dimensional case, we study the evolution in time of small initial distribution functions $f_0:\R^{n}_x\times \R^{n}_v\to [0,\infty)$, in the energy space defined by a higher order Sobolev norm: $$\mathcal{E}^m_N[f]:=\sum_{|\alpha|\leq N}\sum_{ Y^{\alpha}\in {\lambda_m}^{|\alpha|}}\|Y^{\alpha}f\|_{L^1_{x,v}},$$ where $Y^{\alpha}$ are differential operators of order $|\alpha|$, obtained as compositions of modified vector fields in a class ${\lambda}_m$. The vector fields in ${\lambda}_m$ are modifications of the commuting vector fields for the linear Vlasov equation with the trapping potential $\frac{-|x|^2}{2}$ in $\lambda$. See Subsection \ref{subsection_modified_vector_fields} for the precise definition of $\lambda_m$.

\begin{theorem}\label{theorem_stability_vacuum_external_potential_2D}
Let $N\geq 7$. There exists $\epsilon_0>0$ such that for all $\epsilon\in (0,\epsilon_0)$, if the initial data $f_0$ for the Vlasov--Poisson system with the trapping potential $\frac{-|x|^2}{2}$ on $\R^2_x\times\R_v^2$ satisfies $\mathcal{E}_N[f_0]\leq \epsilon$. Then, the corresponding solution $f$ for the two dimensional Vlasov--Poisson system with the trapping potential $\frac{-|x|^2}{2}$ exists globally, and it satisfies the following estimates for every $t\in [0,\infty)$ and $x\in \R^2$:
\begin{enumerate}[label = (\roman*)]
    \item Global energy estimate: 
    $$\mathcal{E}^m_N[f(t)]\leq 2\epsilon.$$
    \item Decay in space and time of the spatial density for any multi-index $\alpha$ of order $|\alpha|\leq N-2$:
    $$|\rho(Z^{\alpha}f)(t,x)|\leq \dfrac{C_{N}\epsilon}{(e^t+|x|)^{2}},$$
    as well as improved decay estimates for its derivatives $$|\partial_x^{\alpha}\rho(f)(t,x)|\leq \dfrac{C_{N}\epsilon}{(e^t+|x|)^{2+|\alpha|}},$$ where $C_{N}>0$ is a uniform constant depending only on $N$.
\end{enumerate}
\end{theorem}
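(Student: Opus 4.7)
The proof follows the same bootstrap architecture as in Theorem~\ref{theorem_stability_vacuum_external_potential}, replacing the class $\lambda$ of commuting vector fields by the class $\lambda_m$ of modified vector fields. I would set up a bootstrap on a maximal interval $[0,T^*)$ with the hypothesis $\mathcal{E}^m_N[f(t)] \leq 2\epsilon$, and aim to improve it to $\mathcal{E}^m_N[f(t)] \leq \tfrac{3\epsilon}{2}$ for all $t \in [0,T^*)$. A standard continuation argument then yields $T^* = \infty$, and the pointwise decay bounds in part~(ii) follow from the improved energy bound combined with the pointwise estimates derived along the way.

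The inner loop has four ingredients. First, I would adapt a Klainerman--Sobolev inequality tailored to the vector fields in $\lambda$ to obtain, from the $L^1_{x,v}$ bootstrap on $Z^\alpha f$, the pointwise bound $|\rho(Z^\alpha f)(t,x)| \lesssim \epsilon (e^t + |x|)^{-2}$ for $|\alpha| \leq N - 2$. Second, elliptic regularity for the two-dimensional Poisson equation $\Delta \phi = \rho$ translates this into pointwise bounds on $\nabla_x \phi$ and its derivatives, each $x$-derivative of $\rho$ providing an additional factor of $(e^t + |x|)^{-1}$. Third, I would commute the full non-linear Vlasov operator with products $Y^\alpha$ of modified vector fields; the modification defining $\lambda_m$ is chosen precisely so that the most dangerous commutator term arising from the non-linear part $\mu \nabla_x \phi \cdot \nabla_v$, which would be non-integrable in time in two dimensions if the unmodified fields in $\lambda$ were used, is cancelled exactly by the time derivative of the correction built into $Y$. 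Fourth, integrate the resulting $L^1_{x,v}$ identity along characteristics of the free-streaming plus trapping-potential flow and apply a Gr\"onwall-type argument, recovering $\mathcal{E}^m_N[f(t)] \leq \epsilon + C \epsilon^2$, which improves the bootstrap for $\epsilon$ small.

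The main obstacle is the third step. The correction coefficients in $\lambda_m$ depend non-linearly on the unknown through $\nabla_x \phi$, so the modified operators are no longer plain differential operators with fixed coefficients, and their commutators with the Vlasov operator and amongst themselves generate new non-linear error terms at every order of commutation. One must verify that these correction coefficients remain of size $O(\epsilon)$ throughout the bootstrap, so that $\mathcal{E}^m_N[f]$ and the unmodified norm $\mathcal{E}_N[f]$ are comparable along the solution, and that iterating the modification up to order $N$ produces only finitely many, controllable, error types at each step. The regularity threshold $N \geq 7$ reflects the fact that simultaneous pointwise control of $\nabla_x \phi$, of its derivatives, and of the correction coefficients themselves requires several orders of loss above the $L^1_{x,v}$ energy level at which the modification is defined, which in two dimensions is tighter than in dimension three.
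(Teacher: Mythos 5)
The architecture you describe (bootstrap, weighted Klainerman--Sobolev, elliptic estimates, commutation with modified fields, Gr\"onwall) matches the paper's, but the resolution of your ``main obstacle'' is exactly wrong on the decisive point, and this propagates into several parts of your argument that as stated would not close.

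You write that one must ``verify that these correction coefficients remain of size $O(\epsilon)$ throughout the bootstrap, so that $\mathcal{E}^m_N[f]$ and the unmodified norm $\mathcal{E}_N[f]$ are comparable along the solution.'' Neither of these holds. Since $\T_\phi(\vphi^i_k) = -\tfrac{\mu}{2} e^t \partial_{x^k}(Z^i\phi + c_i\phi)$ and $\nabla_x Z^\gamma \phi$ decays only like $e^{-t}$, the right-hand side is $O(1)$ in time, so integrating along characteristics gives $\vphi = O(\epsilon^{1/2}(1+t))$: the modifications \emph{grow linearly in time}, as the paper emphasizes in its introduction (in contrast to the logarithmic growth in the flat Vlasov--Poisson modification). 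Because $Z^\alpha = \sum P^\alpha_{d\beta}(\vphi) Y^\beta$ with coefficients that are polynomials in $\vphi$, the unmodified energy $\mathcal{E}_N$ can be controlled only with a polynomial-in-$t$ loss over $\mathcal{E}^m_N$, so the two are not comparable. This also means the decay estimate in part~(ii), which concerns $\rho(Z^\alpha f)$ rather than $\rho(Y^\alpha f)$, does not follow by equivalence of norms: one needs the structural identity of Lemma~\ref{lemma_spatial_density_non_modified_vs_into_modified}, which expresses $\rho(Z^\alpha f)$ in terms of $\rho(Q(\partial_x\vphi)Y^\beta f)$ (with $\partial_x\vphi$ uniformly bounded) plus terms carrying extra $e^{-2jt}$ factors that absorb the polynomial growth of $\vphi$; the spatial derivative $\partial_x\vphi$, not $\vphi$ itself, is the quantity that stays $O(\epsilon^{1/2})$.

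Because of this growth, a single bootstrap hypothesis on $\mathcal{E}^m_N$ is not enough, and the concluding estimate is not a simple $\mathcal{E}^m_N[f(t)]\le \epsilon + C\epsilon^2$. The paper runs a four-part bootstrap, (B1)--(B4), coupling the energy bound with pointwise estimates $|Y^\alpha\vphi|\lesssim \epsilon^{1/2}(1+t)$, $|Y^\alpha\nabla\vphi|\lesssim \epsilon^{1/2}$, and $|\nabla_x Z^\alpha\phi|\lesssim \epsilon^{1/2}e^{-t}$. The heart of the argument (Lemma~\ref{lem_main_estimate_2d}) is a coupled Gr\"onwall inequality for the two quantities $\F(t) := \sum \|Y^\alpha(\vphi) Y^\beta f\|_{L^1_{x,v}}$ and $\G(t) := \sum \|Y^\alpha(\partial_x\vphi) Y^\beta f\|_{L^1_{x,v}}$, with carefully tuned exponents $\sigma_0$, $\sigma$ ensuring that $\F(t)\lesssim \epsilon e^{\sigma t}$ (mild, controllable growth) while $\G(t)\lesssim \epsilon$. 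The bookkeeping of multilinear forms $P^{\alpha}_{d\gamma\beta}(\vphi)$ by degree and ``signature'' is what ensures the polynomial-in-$t$ losses from $\vphi$-factors are always beaten by exponential decay from the force field. Without this structure, the estimate you sketch in step four would not close in dimension two, since the factor $e^t$ coming from $\partial_v = \tfrac{1}{2e^t}U - \tfrac{e^t}{2}S$ multiplied against $e^{-t}$ from $\nabla_x\phi$ leaves a borderline, non-integrable contribution; that is precisely what the modification and the linear-growth bookkeeping are designed to handle.
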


\begin{remark}
\begin{enumerate}[label = (\roman*)]
    \item The proofs of Theorem \ref{theorem_stability_vacuum_external_potential} and Theorem \ref{theorem_stability_vacuum_external_potential_2D} fit into the general framework of the vector field method for dispersive collisionless kinetic equations developed in \cite{Sm16}, using weighted Sobolev estimates in terms of commuting vector fields. As a result, we obtain sharp decay estimates in space and time for the induced spatial density by exploiting the weights of the corresponding commuting vector fields.
    \item We exploit the uniform hyperbolicity of the non-linear Hamiltonian flow, by making use of the commuting vector fields contained in the stable and unstable invariant distributions of phase space\footnote{We refer to a \emph{distribution} in phase space $\R^n_x\times\R^n_v$ as a map $(x,v)\mapsto \Delta_{(x,v)}\subseteq T_{(x,v)}(\R^n_x\times\R^n_v)$, where $\Delta_{(x,v)}$ are vector subspaces satisfying suitable conditions (in the standard sense used in differential geometry).} for the linearized system. In dimension two, we make use of modified vector field techniques due to the slow decay estimates in time. The modifications to the commuting vector fields for the linearized system \emph{grow linearly in time}. This is in contrast with previous applications of modified vector fields to collisionless kinetic equations where the modifications \emph{grow logarithmically in time}. See for instance \cite{Sm16, B20, FJS21}. As a result, we obtain exponential decay in time for the induced spatial density. The rate of exponential decay for the spatial density coincides with the \emph{sum of all positive Lyapunov exponents} of the Hamiltonian flow.
\item The decay assumed in the velocity variable of the initial distribution functions in Theorem \ref{theorem_stability_vacuum_external_potential} and Theorem \ref{theorem_stability_vacuum_external_potential_2D} is \emph{optimal}. The integrability in the velocity variable of the distribution function is required to make sense of the Poisson equation classically. Similar assumptions are made for derivatives of $f$. In particular, Theorem \ref{theorem_stability_vacuum_external_potential} and Theorem \ref{theorem_stability_vacuum_external_potential_2D} allow initial distribution functions with infinite total Hamiltonian energy.
\end{enumerate}
\end{remark}

Let $f$ be a small data solution of \eqref{vlasov_poisson_unstable_trapping_potential_paper}, according to the assumptions in Theorem \ref{theorem_stability_vacuum_external_potential} or Theorem \ref{theorem_stability_vacuum_external_potential_2D}. The particle dynamics along which the distribution function $f$ is transported corresponds to the \emph{characteristic flow} given by 
\begin{equation}\label{characteristics_NL_system_section_trap_intro}
\frac{d}{dt}X(t,x,v)=V(t,x,v),\qquad \frac{d}{dt}V(t,x,v)=X(t,x,v)-\mu\nabla_x\phi (t,X(t,x,v)),
\end{equation}
with the initial data $X(0,x,v)=x$ and $V(0,x,v)=v$. The characteristics are well-defined by the classical Cauchy--Lipschitz theorem. In the proofs of Theorem \ref{theorem_stability_vacuum_external_potential} and Theorem \ref{theorem_stability_vacuum_external_potential_2D}, we show that $\nabla_x\phi$ decays exponentially in time. Thus, the characteristic flow \eqref{characteristics_NL_system_section_trap_intro} determines a decaying perturbation of the linearized particle system as $t\to\infty$. For this reason, one expects that unstable trapping holds for the characteristic flow \eqref{characteristics_NL_system_section_trap_intro}.

\begin{definition}
Let $(X(t,x,v),V(t,x,v))$ be a solution of the characteristic flow \eqref{characteristics_NL_system_section_trap_intro}. We say that $(x,v)$ \emph{escapes to infinity}, if $\|(X(t),V(t))\|\to \infty$ as $t\to\infty$. If $(x,v)$ does not escape to infinity, we call $(x,v)$ \emph{trapped}. We denote by $\Gamma_+\subset \R^n_x\times\R^n_v$ the union of all trapped $(x,v)$. We call $\Gamma_+$ the \emph{trapped set}.
\end{definition}

Since the force field $\nabla_x\phi$ decays exponentially in time, the origin $\{x=0,v=0\}$ is \emph{formally} a fixed point of \eqref{characteristics_NL_system_section_trap_intro} when $t\to \infty$. Applying the stable manifold theorem \cite[Theorem 2.6]{Hin21} for decaying perturbations of time-translation-invariant dynamical systems with hyperbolic trapping, the set $$W^s(0,0):=\Big\{(x,v)\in \R^n_x\times\R^n_v: (X(t,x,v),V(t,x,v))\to (0,0) \text{   as   } t\to \infty\Big\}$$ defines the stable manifold of the origin. Furthermore, $W^s(0,0)$ is an $n$-dimensional invariant manifold of class $C^1$ which converges to the trapped set of the linearized system $\{x+v=0\}$ when $t\to \infty$.

In the specific case of a decaying perturbation \eqref{characteristics_NL_system_section_trap_intro} of the linearized particle system, we identify \emph{explicitly} the stable manifold $W^s(0,0)$ in terms of the non-linear evolution in time of the force field $\nabla_x\phi$. Moreover, we characterize the trapped set $\Gamma_+$ as the stable manifold $W^s(0,0)$.

\begin{theorem}\label{thm_characterization_trapped_set}
Let $f_0$ be an initial data for the Vlasov--Poisson system with the trapping potential $\frac{-|x|^2}{2}$ on $\R^n_x\times\R_v^n$, such that $\mathcal{E}_N[f_0]< \epsilon_0$. Let $\Gamma_+$ be the trapped set of the characteristic flow \eqref{characteristics_NL_system_section_trap_intro} associated to the corresponding solution $f$ of \eqref{vlasov_poisson_unstable_trapping_potential_paper}. Then, the trapped set $\Gamma_+$ is equal to the $n$-dimensional stable manifold of the origin $W^s(0,0)$ of class $C^{N-n-1}$. Moreover, the trapped set is characterized as
\begin{equation}\label{characterization_stable_manifold_intro}
\Gamma_+=\Big\{(x,v): x+v=\int_0^{\infty}\frac{1}{e^{t'}}\mu\nabla_x\phi(t',X(t',x,v))dt' \Big\},
\end{equation}
where we have $$\Big|\int_0^{\infty}\frac{1}{e^{t'}}\mu\nabla_x\phi(t',X(t',x,v))dt'\Big|\leq C_{N,n}\e_0,$$ with $C_{N,n}>0$ a uniform constant depending only on $n$ and $N$.
\end{theorem}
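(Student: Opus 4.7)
The plan is to switch to hyperbolic coordinates along characteristics. Set $P(t):=X(t,x,v)+V(t,x,v)$ and $Q(t):=X(t,x,v)-V(t,x,v)$. Differentiating along \eqref{characteristics_NL_system_section_trap_intro} yields the decoupled pair
\begin{align*}
\dot P &= P - \mu\nabla_x\phi(t,X(t,x,v)),\\
\dot Q &= -Q + \mu\nabla_x\phi(t,X(t,x,v)),
\end{align*}
which makes manifest the unstable direction $\{x-v=0\}$ and the stable direction $\{x+v=0\}$ of the linearized flow. The first input I would need is exponential decay of the forcing: from Theorem~\ref{theorem_stability_vacuum_external_potential} or Theorem~\ref{theorem_stability_vacuum_external_potential_2D} the induced density satisfies $|\rho(t,x)|\lesssim \e(e^t+|x|)^{-n}$, and a standard computation with the Green's function of $\Delta_x$ converts this into a uniform exponential-in-$t$ bound on $|\nabla_x\phi(t,\cdot)|$.

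Applying Duhamel's formula to the $P$-equation gives
\begin{equation*}
e^{-t}P(t) = (x+v) - \int_0^t e^{-s}\mu\nabla_x\phi(s,X(s,x,v))\,ds,
\end{equation*}
and the $e^{-s}$ weight combined with the exponential decay of $\nabla_x\phi$ makes the integral converge absolutely, supplying the bound $C_{N,n}\e_0$ claimed in the theorem. I would then introduce the defect map
\begin{equation*}
\Psi(x,v) := (x+v) - \int_0^\infty e^{-s}\mu\nabla_x\phi(s,X(s,x,v))\,ds
\end{equation*}
and prove the two inclusions characterizing $\Gamma_+$. If $(x,v)\in\Gamma_+$, then $P(t)$ is bounded by definition of trapping, so $e^{-t}P(t)\to 0$, which forces $\Psi(x,v)=0$. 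Conversely, when $\Psi(x,v)=0$ one can rewrite
\begin{equation*}
P(t) = -\int_t^\infty e^{t-s}\mu\nabla_x\phi(s,X(s,x,v))\,ds,
\end{equation*}
which decays exponentially; the analogous Duhamel identity for $Q$ together with the exponential decay of $\nabla_x\phi$ shows $Q(t)\to 0$ as well. Hence $(X(t),V(t))\to(0,0)$ and $(x,v)\in W^s(0,0)$. Combined with the obvious inclusion $W^s(0,0)\subset\Gamma_+$, this yields $\Gamma_+=W^s(0,0)=\{\Psi=0\}$ and the explicit characterization \eqref{characterization_stable_manifold_intro}.

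To upgrade this set to a $C^{N-n-1}$ submanifold I would invoke the implicit function theorem. The Jacobian of $\Psi$ in $(x,v)$ is an $O(\e_0)$-perturbation of the full-rank map $(x,v)\mapsto x+v$, since differentiating under the integral picks up factors of $\partial_{(x,v)}X(s,\cdot)$ which are $O(\e_0)$-close to the linear flow. It follows that $\{\Psi=0\}$ is locally the graph of a $C^{N-n-1}$ function over the stable subspace $\{x+v=0\}$, hence a smooth $n$-dimensional manifold. The regularity count requires iterated differentiation in $(x,v)$ of both $\nabla_x\phi$ and of the characteristic map $(x,v)\mapsto X(s,x,v)$, the latter satisfying a linear variational ODE with exponentially decaying forcing.

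The main obstacle is propagating enough regularity through the integral formula without losing the exponential decay that guaranteed absolute convergence in the first place. Each $(x,v)$-derivative commutes with the variational flow to produce an extra factor of a higher-order spatial derivative of $\nabla_x\phi$; the improved bound $|\partial_x^\alpha\rho|\lesssim \e(e^t+|x|)^{-n-|\alpha|}$ from part (ii) of the main theorems is precisely what keeps these integrals finite. A careful bookkeeping, paying the usual Sobolev embedding cost of $\lceil n/2\rceil$ derivatives to pass from $L^1_{x,v}$ energies on $Z^\alpha f$ to pointwise estimates on $\partial_x^\alpha\nabla_x\phi$, and then an additional derivative loss from differentiating the characteristic flow itself, is what produces the final exponent $N-n-1$.
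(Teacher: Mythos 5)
Your proposal follows essentially the same route as the paper: integrate the characteristic flow in the hyperbolic coordinates $P=X+V$, $Q=X-V$ to obtain the Duhamel identity $e^{-t}P(t)=(x+v)-\int_0^t e^{-s}\mu\nabla_x\phi\,ds$ (exactly the paper's formulae \eqref{formula_charact_flow_x_plus_v}--\eqref{formula_charact_flow_x_minus_v}), introduce the defect map $\Psi(x,v)=x+v-\int_0^\infty e^{-s}\mu\nabla_x\phi\,ds$, prove the two inclusions $\Gamma_+\subset\{\Psi=0\}$ and $\{\Psi=0\}\subset W^s(0,0)\subset\Gamma_+$, and upgrade $\{\Psi=0\}$ to a $C^{N-n-1}$ manifold by the implicit function theorem after checking that the Jacobian of $\Psi$ is a small perturbation of the rank-$n$ map $(x,v)\mapsto x+v$. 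These are precisely the steps in Proposition~\ref{prop_stable_mfld_exist_and_propert} and Corollary~\ref{cor_characteriz_trapped_set}.

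A few small imprecisions worth flagging. First, ``$P(t)$ is bounded by definition of trapping'' is not literally correct: trapping only says $\|(X,V)\|\not\to\infty$, so there is merely a subsequence along which $P(t_k)$ is bounded; that still forces $e^{-t_k}P(t_k)\to 0$ and hence $\Psi=0$, and the contrapositive in the paper (if $\Psi\neq 0$ then $|P(t)|\gtrsim e^t$ while $|Q(t)|\to 0$, so the orbit escapes) is the cleaner way to phrase it. Second, the Sobolev cost from $L^1_{x,v}$ energies on $Z^\alpha f$ to pointwise control of $\rho$ is $n$ derivatives (the weighted Sobolev inequalities of Propositions~\ref{proposition_weighted_sobolev_linear_vlasov}--\ref{proposition_weighted_sobolev_absolute_values} sum over $|\alpha|\le n$), not $\lceil n/2\rceil$ as you state; that is where the $-n$ in the regularity exponent comes from. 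Third, the variational system for $\partial_{x,v}^\alpha X$ is not one with ``exponentially decaying forcing'' in the naive sense — its homogeneous part already generates the $\cosh t,\sinh t$ growth — and the key is the precise bootstrap bound $|\partial_{x,v}^\alpha X|\le(1+2\e^{1/2})e^t$ proved in the paper's Claims~\ref{claim_estimate_derivative_x_stable_mfld}--\ref{claim_estimate_higher_derivative_x_stable_mfld}; combined with the extra $e^{-(1+|\lambda|)t}$ decay of $\nabla_x\partial_x^\lambda\phi$ and the Faà di Bruno formula (which you gloss over — iterated derivatives of $\nabla_x\phi(t,X(t))$ produce products of lower-order derivatives of $X$, not just a single extra factor), this is what keeps $\partial_{x,v}^\alpha\Phi$ uniformly $O(\e^{1/2})$. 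You also omit the paper's separate verification that $W^s(0,0)$ is non-empty and flow-invariant, though both are short consequences of the characterization you derive. None of these are fatal; they are details of the same argument.
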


\begin{remark}
\begin{enumerate}[label = (\roman*)]
\item In the proof of Theorem \ref{thm_characterization_trapped_set}, we first characterize the set $W^s(0,0)$ as the right hand side of \eqref{characterization_stable_manifold_intro}, and later we show this set is a non-empty invariant manifold of class $C^1$. We find the characterization \eqref{characterization_stable_manifold_intro} of the trapped set by integrating in time the characteristic flow. In particular, we do \emph{not} apply the stable manifold theorem \cite[Theorem 2.6]{Hin21} to obtain Theorem \ref{thm_characterization_trapped_set}.
\item The characterization of the trapped set \eqref{characterization_stable_manifold_intro} gives an \emph{explicit teleological construction} of $\Gamma_+$ in terms of the non-linear evolution in time of the force field $\nabla_x\phi$. In particular, one can easily show that $\Gamma_+$ converges quantitatively to the trapped set of the linearized system $\{x+v=0\}$ when $t\to \infty$.
\end{enumerate}
\end{remark}

\subsubsection{Previous non-linear stability results for dispersive collisionless many-particle systems using vector fields methods}

Vector field methods have been developed to obtain \emph{robust} techniques to prove asymptotic stability results for stationary solutions of non-linear evolution equations. We stress the classical vector field method developed by Klainerman \cite{Kl85} for the study of the wave equation in Minkowski spacetime, which allows the proof of quantitative decay estimates in space and time for solutions to the wave equation, based on weighted Sobolev estimates, using energy norms in terms of commuting vector fields arising from the symmetries of spacetime. The vector field method has shown to be a powerful technique for the study of quasilinear systems of wave equations, such as the Einstein vacuum equations \cite{CK93, LR10}.

The study of vector field methods for dispersive collisionless many-particle systems was pioneered by Smulevici \cite{Sm16} who developed a vector field method for this class of kinetic systems, inspired by the classical vector field method for wave equations introduced in \cite{Kl85}. Specifically Smulevici \cite{Sm16} proved the stability of the vacuum solution for the Vlasov--Poisson system using this methodology. Later Duan \cite{Du22} simplified the functional framework used to prove the asymptotic stability result in \cite{Sm16}. Smulevici \cite{Sm16} was motivated by the work of Fajman, Joudioux, and Smulevici \cite{FJS17}, who developed a vector field method to prove decay estimates in space and time for the spatial density induced by solutions to the \emph{relativistic} Vlasov equation in Minkowski spacetime.\footnote{In general relativity, many-particle systems can be composed by particles moving at the speed of light for which their mass vanishes. Nonetheless, we only comment on stability results for relativistic collisionless many-particle systems for which the mass of their particles is one.} Furthermore, \cite{FJS17} made use of a vector field method to prove stability results for the vacuum solution of the Vlasov--Nordstr\"{o}m system. Later on, Fajman, Joudioux, and Smulevici \cite{FJS21} once again used a vector field method to study dispersive collisionless many-particle systems in a neighborhood of Minkowski spacetime, under the geometric framework of general relativity. In other words, the authors of \cite{FJS21} proved the stability of Minkowski spacetime as a solution of the Einstein--Vlasov system. We emphasize that Taylor and Lindblad \cite{LT20} independently proved the stability of Minkowski spacetime as a solution of the Einstein--Vlasov system by also using a vector field method. 

The vector field method for dispersive collisionless many-particle systems has also been used by Bigorgne \cite{B20, B21, B22}, in order to prove the stability of vacuum for the relativistic Vlasov--Maxwell in dimension greater or equal to three. Wang \cite{Wa22} obtained another proof of the stability of vacuum for the relativistic Vlasov--Maxwell in dimension three, by using a combination of the vector field method and Fourier techniques. We emphasize that the stability of the vacuum solution for the relativistic Vlasov--Maxwell system had been first shown by Glassey and Schaeffer \cite{GS87} using the method of characteristics.

\subsection{Outline of the paper}
The remainder of the paper is structured as follows.
\begin{itemize}
    \item \textbf{Section \ref{preliminaries_unstable_potential}.} We study the linearization of the non-linear Vlasov--Poisson system with the potential $\frac{-|x|^2}{2}$, with respect to its vacuum solution. We introduce the class of vector fields used to define the energy norm seen in Theorem \ref{theorem_stability_vacuum_external_potential}. We conclude with some basic lemmata for the commuted equations.
    \item \textbf{Section \ref{section_exponential_decay_velocity_averages}.} We prove weighted Sobolev inequalities for the induced spatial density of a distribution function by making use of commuting vector fields. We obtain decay in space and time of the spatial density induced by solutions to the linear Vlasov equation with the potential $\frac{-|x|^2}{2}$. 
    \item \textbf{Section \ref{section_proof_theorem_dimension_higher_two}.} We prove global existence of small data solutions for the Vlasov--Poisson system with the potential $\frac{-|x|^2}{2}$ in dimension greater than two.
    \item \textbf{Section \ref{section_proof_theorem_dimension_two}.} We prove global existence of small data solutions for the Vlasov--Poisson system with the potential $\frac{-|x|^2}{2}$ in dimension two using modified vector fields.
    \item \textbf{Section \ref{section_proof_trapped_set_characteristic_flow}.} We characterize the trapped set of the characteristic flow associated to the small data solutions studied in the previous sections. 
\end{itemize}

\begin{ack}
RVR would like to express his gratitude to his advisors Mihalis Dafermos and Cl\'ement Mouhot for their continued guidance and encouragements. RVR also would like to thank L\'eo Bigorgne and Jacques Smulevici for many helpful discussions. RVR received funding from the ANID grant 72190188, the Cambridge Trust grant 10469706, and the European Union’s Horizon 2020 research and innovation programme under the Marie Skłodowska-Curie grant 101034255. AVR received funding from the grant FONDECYT Iniciaci\'on 11220409.
\end{ack}

\section{Preliminaries}\label{preliminaries_unstable_potential}

In this section, we introduce the set of commuting vector fields used to study dispersion for the non-linear Vlasov--Poisson system with the external potential $\frac{-|x|^2}{2}$ building upon the dynamics defined by the flow map associated to the characteristics of the linear Vlasov equation with the same external potential. Furthermore, we prove useful lemmata which are going to be applied in the following section to show weighted Sobolev inequalities for the induced spatial density of a distribution function.

In the rest of the paper, the notation $A\lesssim B$ is repetitively used to specify that there exists a universal constant $C > 0$ such that $A \leq CB$, where $C$ depends only on the dimension $n$, the corresponding order of Sobolev regularity, or other fixed constants.

\subsection{The Vlasov equation with the potential \texorpdfstring{$\frac{-|x|^2}{2}$}{x2}}\label{subsection_vlasov_linear_flow}

In this subsection, we study the dynamics of the linearization of the non-linear Vlasov--Poisson system with the trapping potential $\frac{-|x|^2}{2}$ with respect to its vacuum solution, which is given by the \emph{linear Vlasov equation with the trapping potential $\frac{-|x|^2}{2}$} taking the form
\begin{equation}\label{vlasov_linear_flow}
\begin{cases}
\partial_t f+v\cdot\nabla_xf+x\cdot \nabla_vf=0,\\ 
f(t=0,x,v)=f_0(x,v),
\end{cases}
\end{equation}
where $f_0:\R^{n}_x\times \R^{n}_v\to [0,\infty)$ is a sufficiently regular initial data. We emphasize that this linear Vlasov equation is a transport equation along the Hamiltonian flow given by
\begin{equation}\label{linear_hyperbolic_ode_system}
    \dfrac{dx^i}{dt}=v^i,\qquad \dfrac{dv^i}{dt}=x^i,
\end{equation}
defined by the Hamiltonian system $(\R^{n}_x\times \R^{n}_v, H)$ in terms of the Hamiltonian  $$H(x,v):=\dfrac{1}{2}\sum_{i=1}^n(v^i)^2-\dfrac{1}{2}\sum_{i=1}^n(x^i)^2.$$ The Hamiltonian system $(\R^{n}_x\times \R^{n}_v,H)$ is \emph{completely integrable in the sense of Liouville} due to the $n$ independent conserved quantities in involution $$H^i(x,v):=\dfrac{1}{2}(v^i)^2-\dfrac{1}{2}(x^i)^2,$$ where $i\in \{1,2,\dots,n\}$, whose sum yields the total Hamiltonian $H$. In particular, we can write an explicit solution for the linear Vlasov equation (\ref{vlasov_linear_flow}) by computing the flow map precisely.

\begin{lemma}
Let $f_0$ be an initial data for the Vlasov equation (\ref{vlasov_linear_flow}). Then, the corresponding solution $f$ to the Vlasov equation (\ref{vlasov_linear_flow}) is given by
\begin{equation}
    f(t,x,v)=f_0\Big(x\cosh t-v\sinh t , v \cosh t-x \sinh t\Big).
\end{equation}
\end{lemma}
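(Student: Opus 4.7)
The plan is to apply the method of characteristics directly. The PDE in \eqref{vlasov_linear_flow} is a first-order linear transport equation, so $f$ is constant along the integral curves of the Hamiltonian flow \eqref{linear_hyperbolic_ode_system}. Thus if $(X(t;x_0,v_0), V(t;x_0,v_0))$ denotes the solution of \eqref{linear_hyperbolic_ode_system} with initial data $(x_0,v_0)$, the solution is
\begin{equation*}
f\bigl(t, X(t;x_0,v_0), V(t;x_0,v_0)\bigr) = f_0(x_0,v_0),
\end{equation*}
and the claimed formula will follow by solving the ODE system \eqref{linear_hyperbolic_ode_system} in closed form and then inverting the flow map to express $(x_0,v_0)$ in terms of $(t,x,v)$.

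First I would integrate the ODE system \eqref{linear_hyperbolic_ode_system}. Since the equations decouple across coordinates, each component satisfies $\ddot{x}^i = x^i$, whose general solution is a linear combination of $\cosh t$ and $\sinh t$ with coefficients determined by the initial conditions. Matching $X(0) = x_0$ and $\dot{X}(0) = V(0) = v_0$ immediately yields the forward flow
\begin{equation*}
X(t;x_0,v_0) = x_0 \cosh t + v_0 \sinh t, \qquad V(t;x_0,v_0) = x_0 \sinh t + v_0 \cosh t.
\end{equation*}

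Next I would invert this linear map in the variables $(x_0, v_0)$ for fixed $t$. The associated $2\times 2$ block matrix has determinant $\cosh^2 t - \sinh^2 t = 1$, so the inversion is immediate and gives $x_0 = x\cosh t - v\sinh t$ and $v_0 = v\cosh t - x\sinh t$. Substituting into $f(t,x,v) = f_0(x_0,v_0)$ produces the claimed expression.

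There is no real obstacle here; the only point worth noting is that, having written down the candidate formula, one should (and I would) double-check it by computing $\partial_t f + v\cdot\nabla_x f + x\cdot\nabla_v f$ on the right-hand side and observing that the coefficients of $\partial_{x_0} f_0$ and $\partial_{v_0} f_0$ vanish due to the identities $\partial_t(\cosh t) = \sinh t$ and $\partial_t(\sinh t) = \cosh t$. This provides a direct PDE verification independent of the characteristic derivation, and confirms that the initial condition $f(0,x,v) = f_0(x,v)$ is satisfied since $\cosh 0 = 1$ and $\sinh 0 = 0$.
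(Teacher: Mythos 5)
Your proof is correct and follows essentially the same route as the paper: both apply the method of characteristics, solve the linear ODE system explicitly, and evaluate $f_0$ at the inverted flow map. The only cosmetic difference is that you integrate via the second-order ODE $\ddot{x}^i = x^i$ and invert the resulting $\cosh/\sinh$ matrix using its unit determinant, whereas the paper decomposes into the decoupled modes $x^i \pm v^i$ (which evolve by $e^{\pm t}$) and then uses the group property $\phi_{-t}=\phi_t^{-1}$; these are equivalent computations.
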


\begin{proof}
Integrating directly the Hamiltonian flow (\ref{linear_hyperbolic_ode_system}) satisfied by the characteristics of the linear Vlasov equation, we obtain $$(X_{\L}^i+V_{\L}^i)(t)=e^{t}(X_{\L}^i+V_{\L}^i)(0),\qquad (X_{\L}^i-V_{\L}^i)(t)=e^{-t}(X_{\L}^i-V_{\L}^i)(0),$$ for every $i\in \{1,2,\dots,n\}$. As a result, the flow map $\phi_t:\R^{n}_x\times \R^{n}_v\to \R^{n}_x\times \R^{n}_v$ defined by the characteristics of the Vlasov equation (\ref{vlasov_linear_flow}) is given by
\begin{equation}
    \phi_t(x,v):=(X_{\L}(t),V_{\L}(t))=\Big(x\cosh t +v\sinh t ,x\sinh t+v\cosh t \Big),
\end{equation}
which allows to write the solution of the linear Vlasov equation (\ref{vlasov_linear_flow}) by $$f(t,x,v)=f_0(\phi_{-t}(x,v))=(X_{\L}(-t),V_{\L}(-t))=f_0\Big(x\cosh t-v\sinh t , v \cosh t-x \sinh t\Big),$$ in terms of the initial distribution function $f_0$. 
\end{proof}

\subsection{Macroscopic and microscopic vector fields}\label{subsection_macro_micro_vector_field}

In this subsection, we introduce classes of vector fields contained in the tangent space of phase space used to study the dispersion of small data solutions for the non-linear Vlasov--Poisson system with the trapping potential $\frac{-|x|^2}{2}$ motivated by the explicit dynamics of the linear Vlasov equation (\ref{vlasov_linear_flow}). For this purpose, we introduce the following terminology: we say that a vector field is \emph{macroscopic} if it is contained in the tangent space of $\R^n_x$, and we say that a vector field is \emph{microscopic} if it is contained in the tangent space of $\R^n_x\times \R^n_v$.

Let us consider the generator of the Hamiltonian flow defined by the characteristics of the linear Vlasov equation (\ref{vlasov_linear_flow}) given by 
\begin{equation}
    X:=v \cdot \nabla_{x}+x\cdot \nabla_{v},
\end{equation}
and observe the linear Vlasov equation (\ref{vlasov_linear_flow}) can be written as
$$(\partial_t+X)f =0.$$

The commutators between the vector fields $\partial_{x^i}$, $\partial_{v^i}$ and $X$ are given by
\begin{equation*}
    [\partial_{x^i}, X ]=\partial_{v^i}, \quad 
    [\partial_{v^i},X]=\partial_{x^i},\quad [\partial_{x^i},\partial_{v^i}]=0,\quad 
    \text{for every $i\in \{1,2,\dots,n\}$.}
\end{equation*}
This allows us to exhibit several vector fields that commute with equation (\ref{vlasov_linear_flow}). More precisely, let us consider the following commuting microscopic vector fields
\begin{enumerate}[label = (\roman*)]
    \item unstable vector fields $U_i:=e^t(\partial_{x^i}+\partial_{v^i})$,
    \item stable vector fields $S_{i}:=e^{-t}(\partial_{x^i}-\partial_{v^i})$,
    \item scaling in phase space $L:=\sum_{i=1}^n x^i\partial_{x^i}+v^i\partial_{v^i}$,
    \item rotations $R_{ij}:=x^i\partial_{x^j}-x^j\partial_{x^i}+v^i\partial_{v^j}-v^j\partial_{v^i}$,
\end{enumerate}
and define 
\begin{equation*}
\lambda:=\Big\{ U_i, S_i, L, R_{ij}  \Big\},\quad  \lambda_0:=\Big\{ U_i, L, R_{ij}  \Big\},
\end{equation*}
where $i,j\in \{1,2,\dots,n\}$. The collection of microscopic vector fields $\lambda$ is used to set the energy space on which the distribution functions in this paper are defined.

\begin{lemma}\label{lemma_commutators_Vlasov_external_potential}
Let $f$ be a regular solution of the Vlasov equation with the trapping potential $\frac{-|x|^2}{2}$. Then, $Zf$ is also a solution of this equation for every $Z\in \lambda$. 
\end{lemma}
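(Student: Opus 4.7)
The plan is to observe that since $f$ satisfies $(\partial_t+X)f=0$, applying $\partial_t+X$ to $Zf$ produces
\begin{equation*}
(\partial_t+X)(Zf)=Z\bigl((\partial_t+X)f\bigr)+[\partial_t+X,\,Z]f=[\partial_t+X,\,Z]f,
\end{equation*}
so the statement reduces to checking that $[\partial_t+X,Z]=0$ as a differential operator on functions of $(t,x,v)$, for every $Z\in\lambda$. I would dispatch the four families of vector fields by direct commutator calculations, using only the basic identities already recorded, namely $[\partial_{x^i},X]=\partial_{v^i}$, $[\partial_{v^i},X]=\partial_{x^i}$, and $[\partial_{x^i},\partial_{v^j}]=0$.

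For the unstable fields $U_i=e^t(\partial_{x^i}+\partial_{v^i})$, the time-dependent prefactor contributes $[\partial_t,U_i]=U_i$, while the spatial piece contributes $[X,U_i]=e^t([X,\partial_{x^i}]+[X,\partial_{v^i}])=-e^t(\partial_{v^i}+\partial_{x^i})=-U_i$. The two terms cancel, giving $[\partial_t+X,U_i]=0$. The analogous computation for $S_i=e^{-t}(\partial_{x^i}-\partial_{v^i})$ yields $[\partial_t,S_i]=-S_i$ and $[X,S_i]=S_i$, again summing to zero; this exponential-cancellation mechanism is precisely the point of the stable/unstable weights $e^{\pm t}$ tailored to the hyperbolic Hamiltonian flow.

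For the phase-space scaling $L=\sum_i(x^i\partial_{x^i}+v^i\partial_{v^i})$, clearly $[\partial_t,L]=0$, and $[X,L]=0$ follows from a short bookkeeping calculation on the four cross-terms $[v^j\partial_{x^j},x^i\partial_{x^i}]$, $[v^j\partial_{x^j},v^i\partial_{v^i}]$, $[x^j\partial_{v^j},x^i\partial_{x^i}]$, $[x^j\partial_{v^j},v^i\partial_{v^i}]$, which evaluate to $v^i\partial_{x^i}$, $-v^j\partial_{x^j}$, $-x^j\partial_{v^j}$, $x^i\partial_{v^i}$ and hence sum to zero; this reflects that $X$ is homogeneous of degree zero under the simultaneous dilation of $(x,v)$. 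For the rotations $R_{ij}$, we have $[\partial_t,R_{ij}]=0$, and $[X,R_{ij}]=0$ by direct expansion: the terms coming from $v\cdot\nabla_x$ against the $x$-part of $R_{ij}$ cancel in pairs against those coming from $x\cdot\nabla_v$ against the $v$-part, reflecting the invariance of $X=v\cdot\nabla_x+x\cdot\nabla_v$ under the diagonal action of $SO(n)$ on $\R^n_x\times\R^n_v$.

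No step should be a genuine obstacle; the only thing to be careful about is sign bookkeeping in the $L$ computation and confirming the indexed cancellations for the rotations $R_{ij}$. Once all four commutators are verified to vanish, the opening display shows $(\partial_t+X)(Zf)=0$, completing the proof.
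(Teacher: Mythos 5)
Your proof is correct and follows essentially the same route as the paper: both reduce the lemma to verifying that $[\partial_t+X,Z]=0$ for each $Z\in\lambda$, which the paper asserts without calculation and you verify by direct expansion. The additional case-by-case bookkeeping you supply (the cancellation between $[\partial_t,U_i]=U_i$ and $[X,U_i]=-U_i$, etc.) is correct and simply makes explicit what the paper leaves to the reader.
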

\begin{proof}
Observe that $[\partial_t+X,Z]=0$, for every $Z\in \lambda$. Thus, we have
$$(\partial_t+X)(Zf)=Z(\partial_t+X)f+[\partial_t+X,Z]f=0,$$ since $f$ is a solution of the linear Vlasov equation. Therefore, $Zf$ is a solution as well. 
\end{proof}

Observe that for every sufficiently regular solution $f$ to the linear Vlasov equation, the norm $\|f(t)\|_{L^1_{x,v}}$ is constant in time. In particular, we have that $$\|f(t)\|_{L^1_{x,v}}=\|f(0)\|_{L^1_{x,v}},$$ for every $t\ge 0$. A similar conservation law for derivatives of the distribution function follows from Lemma \ref{lemma_commutators_Vlasov_external_potential}. 

\begin{corollary}\label{corollary_conservation_law_vector_fields}
Let $f_0$ be a sufficiently regular initial data for the Vlasov equation (\ref{vlasov_linear_flow}). Then, the corresponding solution $f$ to the Vlasov equation (\ref{vlasov_linear_flow}) satisfies that $$\|Zf(t)\|_{L^1_{x,v}}=\|Z f(0)\|_{L^1_{x,v}},$$ for every $t\geq 0$, and every vector field $Z\in \lambda$.
\end{corollary}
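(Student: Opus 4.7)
The plan is to combine Lemma \ref{lemma_commutators_Vlasov_external_potential} with the fact that the Hamiltonian flow $\phi_t$ associated to the characteristics (\ref{linear_hyperbolic_ode_system}) is volume-preserving on $\R^n_x \times \R^n_v$. Fix $Z \in \lambda$ and set $g := Zf$. By Lemma \ref{lemma_commutators_Vlasov_external_potential}, $g$ is itself a sufficiently regular solution of the linear Vlasov equation (\ref{vlasov_linear_flow}) with initial datum $g(0,x,v) = (Zf)(0,x,v)$, so it suffices to show that the $L^1_{x,v}$ norm is conserved along the linear flow.

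By the method of characteristics, $g$ is constant along the integral curves of $\partial_t + X$, i.e.\ $g(t,x,v) = g(0, \phi_{-t}(x,v))$, where the flow map computed in Subsection \ref{subsection_vlasov_linear_flow} is
\[
\phi_t(x,v) = \big(x\cosh t + v\sinh t,\; x\sinh t + v\cosh t\big).
\]
On each pair of coordinates $(x^i,v^i)$ the flow acts as the linear map with matrix $\left(\begin{smallmatrix}\cosh t & \sinh t\\ \sinh t & \cosh t\end{smallmatrix}\right)$, whose determinant is $\cosh^2 t - \sinh^2 t = 1$. Hence the Jacobian determinant of $\phi_{-t}$ on $\R^n_x\times\R^n_v$ is $1$.

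The claim now follows from the change of variables $(x',v') = \phi_{-t}(x,v)$:
\[
\|Zf(t)\|_{L^1_{x,v}} = \int_{\R^{2n}} \big|g(0,\phi_{-t}(x,v))\big|\, dx\, dv = \int_{\R^{2n}} |g(0,x',v')|\, dx'\, dv' = \|Zf(0)\|_{L^1_{x,v}}.
\]
There is no real obstacle here: the only mild technical point is to ensure the initial data is regular enough that $Zf$ exists and the pointwise characteristic representation is valid, but this is absorbed into the hypothesis that $f_0$ is sufficiently regular. Alternatively, one could obtain the same conclusion by noting that $X = v\cdot\nabla_x + x\cdot\nabla_v$ is divergence-free in phase space, so testing $(\partial_t + X)g = 0$ against $\mathrm{sgn}(g)$ (suitably regularised) yields $\tfrac{d}{dt}\|g(t)\|_{L^1_{x,v}} = 0$; the characteristic argument above makes this rigorous without any regularisation.
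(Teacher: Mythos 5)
Your proof is correct and follows exactly the route the paper intends: the corollary is stated without a proof environment because it is immediate from the preceding observation that $\|f(t)\|_{L^1_{x,v}}$ is constant for any solution of the linear Vlasov equation, together with Lemma \ref{lemma_commutators_Vlasov_external_potential} giving that $Zf$ is again a solution. You have simply made explicit the reason for $L^1$-conservation (the flow $\phi_t$ is volume-preserving, with Jacobian $\cosh^2 t-\sinh^2 t=1$ on each $(x^i,v^i)$-block), which the paper leaves as a standard fact.
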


\begin{remark}
In Section \ref{section_exponential_decay_velocity_averages}, we prove optimal space and time decay estimates for the spatial density induced by solutions to the linear Vlasov equation (\ref{vlasov_linear_flow}), even though spatial derivatives of the distribution function grow exponentially in time. This follows by using the commuting vector fields of the Vlasov equation contained in the invariant distributions of phase space, since the spatial derivatives of the distribution function can be written as
\begin{align*}
\partial_{x^i}f(t,x,v)&=\dfrac{1}{2}(\partial_{x^i}-\partial_{v^i})f(t,x,v)+\dfrac{1}{2}(\partial_{x^i}+\partial_{v^i})f(t,x,v)\\
&=\dfrac{1}{2}e^t(\partial_{x^i}-\partial_{v^i})f_0(x_0,v_0)+\dfrac{1}{2}e^{-t}(\partial_{x^i}+\partial_{v^i})f_0(x_0,v_0),   
\end{align*}
in terms of a point $(x_0,v_0)$ in the support of the initial distribution function $f_0$.
\end{remark}

Let us also consider the macroscopic vector fields associated to the microscopic vector fields previously defined by 
\begin{equation*}
    U_i^{x}=e^t\partial_{x^i},\quad S_i^{x}=e^{-t}\partial_{x^i}, \quad 
    L^x=\sum_{i=1}^n x^i\partial_{x^i},\quad R_{ij}^x=x^i\partial_{x^j}-x^j\partial_{x^i},
\end{equation*}
and define 
\begin{equation*}
\Lambda=\Big\{ U^x_i, S^x_i, L^x, R^x_{ij}  \Big\},\quad  \Lambda_0=\Big\{ U^x_i, L^x, R^x_{ij}  \Big\},
\end{equation*}
for $i,j\in\{1,\ldots,n\}$. The set of macroscopic vector fields $\Lambda$ and the set of microscopic vector fields $\lambda$ are precisely related to each other by the following result for the study of the spatial density of an arbitrary distribution function. 

\begin{lemma}\label{lemma_linear_vlasov_derivatives_spatial_density}
Let $f$ be a sufficiently regular distribution function. Then, the derivatives of the induced spatial density satisfy
\begin{align*}
    U_i^x\rho(f)&=\rho(U_if),\qquad L^x\rho(f)=\rho(Lf)+n\rho(f), \\
    S_i^x\rho(f)&=\rho(S_{i}f),\qquad R_{ij}^x\rho(f)=\rho(R_{ij}f)
\end{align*}
for every $i,j\in\{1,2,\dots,n\}$
\end{lemma}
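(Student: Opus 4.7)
The plan is to verify each of the four identities by a direct computation: swap the macroscopic $x$-derivative with the $v$-integration defining $\rho$, then handle any velocity derivatives that appear on the right-hand side through integration by parts in $v$. The decay in $v$ of a sufficiently regular distribution function ensures that the boundary terms at infinity vanish.

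First I would record the two elementary facts on which everything rests: differentiation in $x$ commutes with integration in $v$, so $\partial_{x^i}\rho(f)=\rho(\partial_{x^i}f)$; and pure $v$-divergences integrate to zero, i.e.\ $\int_{\R^n}\partial_{v^i}g\,dv=0$ for any $g$ decaying at infinity. From these, the identities for $U_i^x$ and $S_i^x$ are immediate: the scalar factors $e^{\pm t}$ pass outside the integral, and the extra $\pm\partial_{v^i}f$ appearing in $U_if$ or $S_if$ contributes nothing to $\rho$.

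For the rotation identity, the only extra term in $\rho(R_{ij}f)$ compared to $R_{ij}^x\rho(f)$ is the velocity piece $\int(v^i\partial_{v^j}-v^j\partial_{v^i})f\,dv$. Integration by parts rewrites this as $-\int(\delta_{ij}-\delta_{ij})f\,dv=0$, giving $R_{ij}^x\rho(f)=\rho(R_{ij}f)$. For the scaling vector field, the same integration by parts applied to each summand $\int v^i\partial_{v^i}f\,dv$ produces $-\rho(f)$, and summing over $i\in\{1,\dots,n\}$ yields $-n\rho(f)$; rearranging gives $L^x\rho(f)=\rho(Lf)+n\rho(f)$, with the dimensional factor $n$ arising precisely from the $n$ summands in $L$.

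I do not foresee a genuine obstacle here. The only subtlety is tracking the regularity and decay of $f$ required to discard the boundary terms in the integration by parts; since the statement assumes $f$ is sufficiently regular, and in the paper's applications the commuted objects are controlled in $L^1_{x,v}$ norms, this is automatic.
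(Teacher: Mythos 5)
Your proof is correct; the computations via interchanging $x$-differentiation with $v$-integration and integrating by parts in $v$ (using $\partial_{v^j} v^i = \delta_{ij}$ so that each $\int v^i\partial_{v^i}f\,dv = -\rho(f)$, giving the factor $n$) are exactly the right verification. The paper states this lemma without proof, so there is nothing to compare against, but your argument is the standard and intended one.
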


\subsection{Macroscopic and microscopic differential operators}

Let $(Z^i)_i$ be an arbitrary ordering of the microscopic vector fields contained in $\lambda$. In the following, we use a multi-index notation for the microscopic differential operators of order $|\alpha|$ given by the composition $$Z^{\alpha}:=Z^{\alpha_1} Z^{\alpha_2}\dots Z^{\alpha_{n}},$$ for every multi-index $\alpha\in \N^{n}$. We denote by $\lambda^{|\alpha|}$ the family of microscopic differential operators obtained as a composition of $|\alpha|$ vector fields in $\lambda$. Furthermore, we can uniquely associate a macroscopic differential operator to any microscopic differential operator $Z^{\alpha}\in \lambda^{|\alpha|}$ by replacing every microscopic vector field $Z$ by the corresponding macroscopic vector field $Z^x$. By a small abuse of notation, we denote also by $Z^{\alpha}$ the associated macroscopic differential operator to an arbitrary microscopic differential operator $Z^{\alpha}$. We denote by $\Lambda^{|\alpha|}$ the family of macroscopic differential operators of order $|\alpha|$ obtained as a composition of $|\alpha|$ vector fields in $\Lambda$. Finally, we denote by $\partial_x^{\alpha}$ a standard macroscopic differential operator $$\partial_x^{\alpha}:=\partial^{\alpha_1}_{x^{1}}\partial^{\alpha_2}_{x^2}\dots \partial^{\alpha_n}_{x^n},$$ for every multi-index $\alpha\in \N^{n}$.

In the following, we prove that the arbitrary ordering of the vector fields chosen to build differential operators can be taken without loss of generality modulo some uniform constants.

\begin{lemma}\label{lemma_commuting_in_lambda}
Let $\Omega\in \{\lambda, \lambda_0, \Lambda,\Lambda_0\}$. Let $\alpha$ and $\beta$ be two multi-indices. Then, the commutator between $Z^{\alpha}\in \Omega^{|\alpha|}$ and $Z^{\beta}\in\Omega^{|\beta|}$ is given by $$[Z^{\alpha}, Z^{\beta}]=\sum_{|\gamma|\le |\alpha|+|\beta|-1}\sum_{Z^\gamma\in \Omega^{|\gamma|}}C^{\alpha \beta}_{\gamma}Z^{\gamma},$$
for some constant coefficients $C^{\alpha \beta}_{\gamma}$.
\end{lemma}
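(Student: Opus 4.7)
The approach is a straightforward induction on the total order $|\alpha|+|\beta|$, powered by a finite-dimensional Lie-algebraic base case. The key point is that each of the four sets $\lambda,\lambda_0,\Lambda,\Lambda_0$ is closed (over $\mathbb{R}$) under the Lie bracket of vector fields: in other words, for any two generators $Z,W\in\Omega$, the commutator $[Z,W]$ is a linear combination of elements of $\Omega$ with \emph{constant} coefficients (possibly zero). Granting this, the lemma reduces to a Leibniz-type bookkeeping argument.

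For the base case ($|\alpha|=|\beta|=1$), I would tabulate all relevant pairwise commutators in $\lambda$. Using $[\partial_{x^i},\partial_{v^j}]=0$, $[L,\partial_{x^i}]=-\partial_{x^i}$, $[L,\partial_{v^i}]=-\partial_{v^i}$, and the standard rotation identities $[R_{ij},\partial_{x^k}]=-\delta_{ik}\partial_{x^j}+\delta_{jk}\partial_{x^i}$ (same for $\partial_{v^k}$), one checks that
\[
[U_i,U_j]=[S_i,S_j]=[U_i,S_j]=0,\quad [L,U_i]=-U_i,\quad [L,S_i]=-S_i,\quad [L,R_{ij}]=0,
\]
while $[R_{ij},U_k]=-\delta_{ik}U_j+\delta_{jk}U_i$, $[R_{ij},S_k]=-\delta_{ik}S_j+\delta_{jk}S_i$, and $[R_{ij},R_{k\ell}]$ is the usual linear combination of rotations dictated by $\mathfrak{so}(n)$. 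All commutators have constant coefficients and lie in $\mathrm{span}_{\mathbb{R}}(\lambda)$. Crucially, none of them produces a stable vector field $S_i$ unless a stable vector field already appears on the left, which shows that $\lambda_0$ is likewise closed. The macroscopic cases $\Lambda,\Lambda_0$ are identical modulo deleting all $\partial_{v^i}$ factors, so the same identities (and closure statements) apply verbatim.

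For the inductive step, assume the lemma for every pair of multi-indices of total order strictly less than $|\alpha|+|\beta|$, where $|\alpha|+|\beta|\geq 3$. Decompose $Z^{\alpha}=Z^{\alpha_1}Z^{\alpha'}$ with $|\alpha'|=|\alpha|-1$, and use the derivation identity
\[
[Z^{\alpha},Z^{\beta}]=Z^{\alpha_1}\,[Z^{\alpha'},Z^{\beta}]+[Z^{\alpha_1},Z^{\beta}]\,Z^{\alpha'}.
\]
The inductive hypothesis applied to $[Z^{\alpha'},Z^{\beta}]$ expresses it as a constant-coefficient combination of operators $Z^{\gamma_1}\in\Omega^{|\gamma_1|}$ with $|\gamma_1|\le|\alpha|+|\beta|-2$, so prepending $Z^{\alpha_1}$ gives operators in $\Omega^{\le|\alpha|+|\beta|-1}$. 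For the second summand, further decompose $Z^{\beta}=Z^{\beta_1}Z^{\beta'}$ and apply the same identity to $[Z^{\alpha_1},Z^{\beta}]$; an analogous (and shorter) induction shows this is a constant-coefficient combination of elements of $\Omega^{\le|\beta|}$, and multiplying by $Z^{\alpha'}$ keeps the order within $|\alpha|+|\beta|-1$. This completes the induction.

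The only real obstacle is verifying closure of each of the four sets under brackets (the base case); everything else is bookkeeping. In particular one must be slightly careful with $\lambda_0$ and $\Lambda_0$, since a priori a bracket among $U_i,L,R_{ij}$ could have leaked into a stable vector field, but the explicit computations show this does not occur. The inductive argument itself is completely uniform across the four choices of $\Omega$ and makes no use of the specific structure of the generators beyond the closure property established in the base case.
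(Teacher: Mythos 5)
Your proposal is correct and follows essentially the same strategy as the paper: establish the base case by tabulating the pairwise brackets of generators (showing each of the four families is closed under the Lie bracket, including the non-trivial observation that $\lambda_0$ and $\Lambda_0$ do not leak stable vector fields), then induct on the total order using $[AB,C]=A[B,C]+[A,C]B$. The paper's version is terser — it lists only a representative subset of the commutators and dismisses the inductive step in one line — whereas you spell out both the full bracket table and the Leibniz bookkeeping, but the underlying argument is identical.
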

\begin{proof}
Observe that  $$[U_i,R_{ij}]=U_j,\quad [S_i,R_{ij}]=S_j, \quad [L, R_{ij}]=0,\quad [R_{ij},R_{jk}]=R_{ik},$$
$$[U_i,L]=U_i, \quad [S_i,L]=S_i, \quad [U_i, S_j]=0, \quad [U_i,U_j]=0, \quad [S_i,S_j]=0,$$
for $i,j\in\{1,\ldots,n\}$, and note that the same commutation relations hold if we replace $Z\in \lambda$ by the associated macroscopic vector fields $Z^x\in \Lambda$. This argument proves the result for $|\alpha|=|\beta|=1$. The general statement follows by induction.  
\end{proof}

Moreover, we can use the microscopic differential operators previously discussed to build conservation laws for higher order derivatives of a sufficiently regular solution of the Vlasov equation (\ref{vlasov_linear_flow}) as in Corollary \ref{corollary_conservation_law_vector_fields}.

\begin{corollary}\label{corollary_conservation_law_differential_operators}
Let $f_0$ be a sufficiently regular initial data for the Vlasov equation (\ref{vlasov_linear_flow}). Then, the corresponding solution $f$ to the Vlasov equation (\ref{vlasov_linear_flow}) satisfies $$\|Z^{\alpha}f(t)\|_{L^1_{x,v}}=\|Z^{\alpha} f(0)\|_{L^1_{x,v}},$$ for every $t\geq 0$, and every multi-index $\alpha$.
\end{corollary}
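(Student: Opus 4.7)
The plan is to reduce the statement to the case $|\alpha|=1$ already handled in Corollary \ref{corollary_conservation_law_vector_fields}, by combining an induction on the order with the fact that the Hamiltonian vector field $\partial_t+X$ commutes with every element of $\lambda$.

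First, I would observe that if $Z_1,\ldots,Z_k \in \lambda$ are arbitrary vector fields (in any order) and $g$ is a sufficiently regular solution of the linear Vlasov equation (\ref{vlasov_linear_flow}), then $Z_1 Z_2 \cdots Z_k g$ is again a solution. This is proved by induction on $k$, starting from Lemma \ref{lemma_commutators_Vlasov_external_potential} (which gives the base case $k=1$) and using the Leibniz-type identity for commutators: if $[\partial_t+X, Z] = 0$ for each $Z \in \lambda$, then
$$[\partial_t+X, Z_1 \cdots Z_k] = \sum_{j=1}^{k} Z_1 \cdots Z_{j-1}\,[\partial_t+X,Z_j]\,Z_{j+1}\cdots Z_k = 0,$$
so that $(\partial_t+X)(Z_1\cdots Z_k g) = Z_1\cdots Z_k(\partial_t+X)g = 0$. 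Applied with $g=f$, this shows that every $Z^\alpha f$ with $Z^\alpha \in \lambda^{|\alpha|}$ is itself a solution of (\ref{vlasov_linear_flow}).

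Next, I would invoke the fact recorded just before Corollary \ref{corollary_conservation_law_vector_fields}: for any sufficiently regular solution $g$ of (\ref{vlasov_linear_flow}), the $L^1_{x,v}$ norm is preserved in time, i.e.\ $\|g(t)\|_{L^1_{x,v}} = \|g(0)\|_{L^1_{x,v}}$ for all $t\geq 0$. This in turn is a direct consequence of the transport structure of the equation together with the fact that the phase-space vector field $v\cdot\nabla_x + x\cdot\nabla_v$ is divergence-free, so that the associated flow $\phi_t$ preserves the Lebesgue measure $dx\,dv$. Applying this to $g = Z^\alpha f$ yields
$$\|Z^{\alpha} f(t)\|_{L^1_{x,v}} = \|Z^{\alpha} f(0)\|_{L^1_{x,v}},$$
which is the claimed identity.

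I do not anticipate any genuine obstacle here; the only points requiring a little care are (i) making sure the iterated commutator with $\partial_t+X$ vanishes even though individual vector fields in $\lambda$ carry explicit time-dependence through the factors $e^{\pm t}$ (this is already the content of Lemma \ref{lemma_commutators_Vlasov_external_potential}), and (ii) verifying that $Z^\alpha f$ remains sufficiently regular to justify the conservation of the $L^1_{x,v}$ norm — this is built into the hypothesis that $f_0$ is sufficiently regular, since enough regularity for $f_0$ propagates to $f$ along the smooth flow $\phi_t$.
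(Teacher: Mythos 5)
Your proposal is correct and takes essentially the same route the paper intends: the paper states this corollary without a written proof as the iterated version of Corollary \ref{corollary_conservation_law_vector_fields}, relying implicitly on the observation that $[\partial_t+X,Z]=0$ for every $Z\in\lambda$ (Lemma \ref{lemma_commutators_Vlasov_external_potential}) propagates to compositions, combined with the $L^1_{x,v}$-conservation coming from the measure-preserving characteristic flow. Your induction via the Leibniz identity for iterated commutators makes explicit exactly the step the paper leaves implicit.
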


In the following, we state a key vector field identity to obtain quantitative decay estimates in space and time for the spatial density induced by an arbitrary distribution function in terms of a higher order energy norm according to the weighted Sobolev inequalities proven in the following section. For this purpose, we firstly recall the relation $$|x|^2 \partial_{x^j}=\sum_{i=1}^nx^iR_{ij}^x + x^j L^x,$$ noticed in \cite[Lemma 2.5]{Sm16} between the macroscopic rotations and the macroscopic scaling. As a result, we have $$|x| \partial_{x^j}=\sum_{i=1}^n\frac{x^i}{|x|}R_{ij}^x + \frac{x^j}{|x|} L^x,$$ which allows to prove the following useful lemma.

\begin{lemma}\label{lemma_linear_vlasov_weight_vector_fields}
For any multi-index $\alpha$, we have 
\begin{equation}\label{identity_macroscopic_giving_decay}
    (e^t+|x|)^{\alpha}\partial_x^{\alpha}=\sum_{|\beta|\leq |\alpha|}\sum_{Z^{\beta}\in \Lambda_0^{|\beta|}} C_{\beta}Z^{\beta},
\end{equation}
for some uniformly bounded functions $C_{\beta}$.
\end{lemma}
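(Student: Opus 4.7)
The plan is to prove Lemma \ref{lemma_linear_vlasov_weight_vector_fields} by induction on the order $|\alpha|$ of the multi-index. The base case $|\alpha|=0$ is the trivial identity. For $|\alpha|=1$, I would combine the equality $e^t\partial_{x^j}=U_j^x$ with the macroscopic identity $|x|\,\partial_{x^j}=\sum_i \tfrac{x^i}{|x|}R_{ij}^x+\tfrac{x^j}{|x|}L^x$ recalled just before the lemma, obtaining
$$(e^t+|x|)\,\partial_{x^j}=U_j^x+\sum_{i=1}^n \frac{x^i}{|x|}R_{ij}^x+\frac{x^j}{|x|}L^x,$$
in which the coefficients $1$ and $x^i/|x|$ are uniformly bounded by $1$. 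This realises the desired identity at order $1$.

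For the inductive step, assume the identity holds for every multi-index of order at most $k$, and let $\alpha$ have order $k+1$. Writing $\alpha=\alpha'+e_j$ and factoring out a single derivative and a single weight, I would compute
$$(e^t+|x|)^{k+1}\partial_x^{\alpha}=(e^t+|x|)^{k}\cdot \bigl[(e^t+|x|)\partial_{x^j}\bigr]\cdot \partial_x^{\alpha'}=\sum_{Z\in\Lambda_0} b_Z^j\,(e^t+|x|)^{k}\,Z\,\partial_x^{\alpha'},$$
where $b_Z^j\in\{1,\,x^i/|x|,\,x^j/|x|\}$ are the bounded base-case coefficients. I would then commute $Z\in\Lambda_0$ past the weight via the operator identity $(e^t+|x|)^{k}Z=Z(e^t+|x|)^{k}-Z((e^t+|x|)^{k})$ and apply the induction hypothesis to the factor $(e^t+|x|)^{k}\partial_x^{\alpha'}$. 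The resulting compositions $Z\circ Z^\beta$ are flattened into standard $\Lambda_0^{|\beta|+1}$-form using Lemma \ref{lemma_commuting_in_lambda}, while the remainder $Z((e^t+|x|)^{k})\,\partial_x^{\alpha'}$ is controlled through the tame action of $\Lambda_0$ on the weight: direct computation yields
$$U_j^x(e^t+|x|)=\frac{e^tx^j}{|x|},\qquad L^x(e^t+|x|)=|x|,\qquad R_{ij}^x(e^t+|x|)=0,$$
each of absolute value at most $e^t+|x|$, so that $Z((e^t+|x|)^{k})/(e^t+|x|)^{k}$ is uniformly bounded. The remainder therefore reduces, after applying the induction hypothesis to $(e^t+|x|)^{k}\partial_x^{\alpha'}$, to a sum of uniformly bounded coefficients times elements of $\Lambda_0^{\leq k}$.

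The main obstacle is the bookkeeping needed to verify uniform boundedness of the coefficient functions $C_\beta$ produced throughout the induction, since individual $\Lambda_0$-derivatives of the base-case coefficients $x^i/|x|$ develop a $1/|x|$ singularity at the origin. To organise this cleanly I would carry out the induction so that the weight $(e^t+|x|)^{|\alpha|-|\beta|}$ is always moved past $\Lambda_0$-operators through the logarithmic-derivative identity
$$Z\!\left((e^t+|x|)^m\right)=m\,(e^t+|x|)^{m}\cdot \frac{Z(e^t+|x|)}{e^t+|x|},$$
so that only the uniformly bounded quantity $Z(e^t+|x|)/(e^t+|x|)$ is ever differentiated explicitly. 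With this convention every coefficient at each inductive step is a finite polynomial combination of the bounded building blocks $x^i/|x|$, $e^t/(e^t+|x|)$, $|x|/(e^t+|x|)$, and $e^t x^j/(|x|(e^t+|x|))$, all of absolute value at most $1$, which closes the induction and yields \eqref{identity_macroscopic_giving_decay}.
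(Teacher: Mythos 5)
Your high-level plan (induct on $|\alpha|$, use the base-case identity $(e^t+|x|)\partial_{x^j}=U_j^x+\sum_i\tfrac{x^i}{|x|}R_{ij}^x+\tfrac{x^j}{|x|}L^x$, commute the weight past a $\Lambda_0$ vector field) is natural, and indeed the paper gives only the base identity and states the lemma without proof, so an induction of this type is what is expected. But the way you close the induction has a genuine gap.

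After invoking the induction hypothesis $(e^t+|x|)^k\partial_x^{\alpha'}=\sum_\beta C_\beta Z^\beta$ and writing $(e^t+|x|)^k Z = Z\,(e^t+|x|)^k - \bigl(Z(e^t+|x|)^k\bigr)$, the first piece $Z\circ\bigl(\sum_\beta C_\beta Z^\beta\bigr)$ produces, by the Leibniz rule, both $\sum_\beta C_\beta\,ZZ^\beta$ (which you flatten via Lemma~\ref{lemma_commuting_in_lambda}) \emph{and} $\sum_\beta (ZC_\beta)\,Z^\beta$. It is this second sum that violates uniform boundedness: for $Z=U_l^x$ and $C_\beta=x^i/|x|$ one has
\begin{equation*}
U_l^x\!\left(\frac{x^i}{|x|}\right)=\frac{e^t}{|x|}\left(\delta_{il}-\frac{x^ix^l}{|x|^2}\right),
\end{equation*}
which is unbounded near $x=0$ and is not a polynomial in your four building blocks $x^i/|x|$, $e^t/(e^t+|x|)$, $|x|/(e^t+|x|)$, $e^t x^j/(|x|(e^t+|x|))$. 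The logarithmic-derivative device you invoke only treats the pure weight $(e^t+|x|)^m$; it never touches $ZC_\beta$. So the stated induction does not close.

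The lemma is nevertheless true, because the singular factor $e^t/|x|$ always enters multiplying $R^x_{ij}$ or $L^x$, and can be reabsorbed through $\tfrac{e^t}{|x|}R^x_{ij}=\tfrac{x^i}{|x|}U_j^x-\tfrac{x^j}{|x|}U_i^x$ and $\tfrac{e^t}{|x|}L^x=\sum_i\tfrac{x^i}{|x|}U_i^x$. A cleaner way to sidestep the issue altogether is to peel off the \emph{last} derivative instead of the first: write $\partial_x^\alpha=\partial_x^{\alpha'}\partial_{x^j}$, apply the induction hypothesis to the operator $(e^t+|x|)^{|\alpha'|}\partial_x^{\alpha'}$ acting on $g=\partial_{x^j}f$, and then commute $\partial_{x^j}$ to the \emph{left} through $Z^\beta$. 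Since $[\partial_{x^j},U_l^x]=0$, $[\partial_{x^j},L^x]=\partial_{x^j}$, and $[\partial_{x^j},R^x_{il}]=\delta_{ij}\partial_{x^l}-\delta_{lj}\partial_{x^i}$ are constant-coefficient combinations of coordinate derivatives, one gets $Z^\beta\partial_{x^j}=\sum_l\sum_{|\beta'|\leq|\beta|}c_{l\beta'}\,\partial_{x^l}Z^{\beta'}$ with numerical $c_{l\beta'}$; the leftover factor $(e^t+|x|)\partial_{x^l}$ is then absorbed into $\Lambda_0$ by the base case, and the coefficients $C_\beta$ from the induction hypothesis are never differentiated, so uniform boundedness propagates trivially.
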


We conclude this subsection by relating the macroscopic and microscopic differential operators in the same manner as in Lemma \ref{lemma_linear_vlasov_derivatives_spatial_density}. 

\begin{lemma}\label{lemma_connection_microscopic_macroscopic_vector_fields}
Let $f$ be a sufficiently regular distribution function and let $\alpha$ be a multi-index. Then, there exist constant coefficients $C^{\alpha}_{\beta}$ such that
\begin{equation}
    Z^{\alpha}\rho(f)=\rho(Z^{\alpha}f)+\sum_{|\beta|\leq |\alpha|-1}C^{\alpha}_{ \beta}\rho(Z^{\beta}f),
\end{equation}
where the vector fields in the left hand side are macroscopic, whereas the ones in the right hand side are microscopic.
\end{lemma}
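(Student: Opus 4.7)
The plan is to prove the identity by induction on the order $|\alpha|$ of the differential operator, using Lemma \ref{lemma_linear_vlasov_derivatives_spatial_density} as the base case and a single-step reduction at each inductive step. The base case $|\alpha|=1$ is nothing other than the four identities
\begin{align*}
U_i^{x}\rho(f)&=\rho(U_if),\qquad S_i^{x}\rho(f)=\rho(S_if),\\
R_{ij}^{x}\rho(f)&=\rho(R_{ij}f),\qquad L^{x}\rho(f)=\rho(Lf)+n\rho(f),
\end{align*}
collected in Lemma \ref{lemma_linear_vlasov_derivatives_spatial_density}, each of which has exactly the required form (only the scaling vector field $L$ produces a nonzero lower-order contribution, namely the zeroth-order term $n\rho(f)$, all other coefficients $C^\alpha_\beta$ being zero).

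For the inductive step, I would assume the identity holds for all multi-indices of order at most $k$ and take a microscopic differential operator $Z^{\alpha}\in\lambda^{k+1}$. By factoring off the outermost vector field, write $Z^{\alpha}=Z\,Z^{\alpha'}$ with $Z\in\lambda$ and $Z^{\alpha'}\in\lambda^{k}$, with associated macroscopic decomposition $Z^{\alpha}=Z^x\,Z^{\alpha'}$ on the spatial side. Applying $Z^x$ to the inductive hypothesis
\begin{equation*}
Z^{\alpha'}\rho(f)=\rho(Z^{\alpha'}f)+\sum_{|\beta|\leq k-1}C^{\alpha'}_{\beta}\rho(Z^{\beta}f),
\end{equation*}
and then invoking the base case on each individual term $Z^x\rho(Z^{\gamma}f)$, gives
\begin{equation*}
Z^{\alpha}\rho(f)=\rho(Z\,Z^{\alpha'}f)+c_{Z}\rho(Z^{\alpha'}f)+\sum_{|\beta|\leq k-1}C^{\alpha'}_{\beta}\bigl(\rho(Z\,Z^{\beta}f)+c_{Z}\rho(Z^{\beta}f)\bigr),
\end{equation*}
where $c_{Z}=n$ if $Z=L$ and $c_{Z}=0$ otherwise. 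The first term is $\rho(Z^{\alpha}f)$, and every other term is of the form $\rho(Z^{\gamma}f)$ with $|\gamma|\leq k$, so absorbing the coefficients into new constants $C^{\alpha}_{\gamma}$ yields exactly the claimed formula.

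The only mild subtlety is that after composing we obtain microscopic operators such as $Z\,Z^{\beta}$ whose factors may not appear in the same order as some fixed enumeration of $\lambda^{|\gamma|}$; this is handled without loss of generality by Lemma \ref{lemma_commuting_in_lambda}, which produces only lower-order corrections and so preserves the structure of the error term. I do not expect any essential obstacle here: the whole argument is a bookkeeping exercise in which the only genuine input is the single-field identity of Lemma \ref{lemma_linear_vlasov_derivatives_spatial_density}, and the only reason lower-order terms appear at all is the inhomogeneous commutator $L^x\rho(f)-\rho(Lf)=n\rho(f)$ coming from differentiation under the integral sign in the velocity scaling.
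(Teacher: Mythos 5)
Your proof is correct; the paper states this lemma without giving a proof, so there is no paper argument to compare against, but the induction you set up — base case from Lemma \ref{lemma_linear_vlasov_derivatives_spatial_density} and inductive step by peeling off the outermost vector field — is the natural argument, and every term produced is indeed of order at most $|\alpha|-1$. One small simplification: your appeal to Lemma \ref{lemma_commuting_in_lambda} to handle reordering is not actually needed, since the paper defines $\lambda^{k}$ to be the family of \emph{all} compositions of $k$ vector fields in $\lambda$ (not a fixed enumeration), so $Z\,Z^{\beta}$ already lies in $\lambda^{|\beta|+1}$ and the sum in the statement ranges over all such operators; the ordering issue never arises.
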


\subsection{The commuted equations}
Let us denote the non-linear transport operator applied to the distribution function in the Vlasov--Poisson system with the external potential $\frac{-|x|^2}{2}$ by $$\T_\phi :=\partial_t +v\cdot\nabla_x +x\cdot\nabla_v -\mu\nabla_x\phi\cdot \nabla_v ,$$ where the field $\nabla_x\phi$ is defined through the Poisson equation $\Delta \phi=\rho(f)$.

\begin{lemma}\label{lemma_commuted_nonliner_Vlasov}
There exist constant coefficients $C^{\alpha}_{\beta \gamma}$ such that \begin{equation}\label{eq_comm}
    [\T_{\phi},Z^{\alpha}]=\sum_{|\gamma|+|\beta|\leq |\alpha|,}\sum_{|\beta|\leq |\alpha|-1} C^{\alpha}_{\beta \gamma}\nabla_x Z^{\gamma}\phi\cdot  \nabla_v Z^{\beta},
\end{equation}
where the vector fields $Z^{\alpha}\in\lambda^{|\alpha|}$, $Z^{\gamma}\in \Lambda^{|\gamma|}$, and $Z^{\beta}\in \lambda^{|\beta|}$.
\end{lemma}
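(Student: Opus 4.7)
The plan is to prove the lemma by induction on $|\alpha|$, after separating $\T_\phi$ into its linear part (with trapping potential) and the nonlinear perturbation and reducing the problem to the commutator of the latter with $Z^\alpha$.

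First I would write $\T_\phi = (\partial_t + X) - \mu \nabla_x \phi \cdot \nabla_v$. Since $[\partial_t + X, Z] = 0$ for every $Z \in \lambda$ by Lemma \ref{lemma_commutators_Vlasov_external_potential}, a short induction using $[\partial_t + X, Z Z^{\alpha'}] = [\partial_t + X, Z] Z^{\alpha'} + Z [\partial_t + X, Z^{\alpha'}]$ yields $[\partial_t + X, Z^\alpha] = 0$ for every $Z^\alpha \in \lambda^{|\alpha|}$. Hence the whole question reduces to analysing $[\nabla_x \phi \cdot \nabla_v, Z^\alpha]$, and the claimed identity follows once this commutator is expressed as $\sum C^\alpha_{\beta\gamma} \nabla_x Z^\gamma \phi \cdot \nabla_v Z^\beta$ with the degree constraints in the statement.

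For the base case $|\alpha|=1$ I would compute $[\nabla_x \phi \cdot \nabla_v, Z]$ for each $Z \in \lambda$ directly. The key observation is that $[\nabla_x \phi \cdot \nabla_v, \partial_{v^i}] = 0$ since $\phi = \phi(t,x)$ is independent of $v$, so only the macroscopic part of $Z$ contributes nontrivially when differentiating the coefficient $\nabla_x\phi$. Using the macroscopic commutation relations (that $U_i^x$ and $S_i^x$ commute with $\nabla_x$; that $L^x \nabla_x = \nabla_x L^x - \nabla_x$; and that $R_{ij}^x \partial_{x^k} = \partial_{x^k} R_{ij}^x - \delta_{ik}\partial_{x^j} + \delta_{jk}\partial_{x^i}$), together with the microscopic commutation relations $[\partial_{v^j}, U_i] = [\partial_{v^j}, S_i] = 0$, $[\partial_{v^j}, L] = \partial_{v^j}$, and $[\partial_{v^j}, R_{ik}] = \delta_{ji}\partial_{v^k} - \delta_{jk}\partial_{v^i}$, each commutator collapses to a linear combination of $\nabla_x Z^\gamma \phi \cdot \nabla_v$ with $|\gamma| \leq 1$, constant coefficients, and $|\beta| = 0$, which matches the required form.

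For the induction step I would use $[\T_\phi, Z Z^\alpha] = [\T_\phi, Z] Z^\alpha + Z [\T_\phi, Z^\alpha]$. The first term, by the base case, is a sum of $\nabla_x Z^\gamma \phi \cdot \nabla_v Z^\alpha$ with $|\gamma| \leq 1$, $|\beta| = |\alpha|$, which is of the required form. For the second term, applying $Z$ by Leibniz to each summand $\nabla_x Z^\gamma \phi \cdot \nabla_v Z^\beta$ produces $Z(\nabla_x Z^\gamma \phi) \cdot \nabla_v Z^\beta + \nabla_x Z^\gamma \phi \cdot Z(\nabla_v Z^\beta)$. On the first piece only the macroscopic part of $Z$ acts nontrivially, and the macroscopic commutation relations above give $\nabla_x (Z^x Z^\gamma \phi)$ plus lower-order terms, each of the form $\nabla_x Z^{\gamma'} \phi$. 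On the second piece the microscopic commutators $[\partial_{v^j}, Z]$ above yield $Z (\nabla_v Z^\beta) = \nabla_v (Z Z^\beta)$ plus constant multiples of $\nabla_v Z^\beta$. After reordering compositions using Lemma \ref{lemma_commuting_in_lambda}, one obtains a linear combination of terms $\nabla_x Z^{\gamma'} \phi \cdot \nabla_v Z^{\beta'}$ with $|\gamma'| + |\beta'| \leq |\alpha| + 1$ and $|\beta'| \leq |\beta| + 1 \leq |\alpha|$, as required, and with constant coefficients.

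The main obstacle, in my view, is purely combinatorial bookkeeping rather than any analytic subtlety: one must verify at each application of Leibniz that $Z$ falls either on the $\phi$-factor (raising $|\gamma|$ by at most one) or on the $f$-factor (raising $|\beta|$ by at most one), but never on both at the same term, so that the total count $|\gamma|+|\beta|$ and the strict bound $|\beta| \leq |\alpha|-1$ are both preserved. The constancy of the coefficients is automatic because every commutator used along the way has constant (in fact integer) coefficients; no time-dependent weights survive the computation.
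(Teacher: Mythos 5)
Your proof follows the paper's own argument: compute $[\T_\phi,Z]$ for each $Z\in\lambda$ as the base case (using that $\phi$ is $v$-independent and the explicit commutation relations of $\lambda$ with $\partial_{x^k},\partial_{v^k}$), then induct via $[\T_\phi,ZZ^\alpha]=[\T_\phi,Z]Z^\alpha+Z[\T_\phi,Z^\alpha]$ together with the same bookkeeping of when $Z$ falls on the $\phi$-factor versus the $f$-factor. The only cosmetic difference is that you explicitly split off the linear transport part $\partial_t+X$ at the outset, whereas the paper does this implicitly by relying on Lemma \ref{lemma_commutators_Vlasov_external_potential} in its base-case computation; both versions also rely (the paper explicitly, you implicitly in the phrase ``each commutator collapses'') on the cancellation of the cross terms for rotations, which is what keeps the output in the dot-product form $\nabla_x Z^\gamma\phi\cdot\nabla_v Z^\beta$.
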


\begin{proof}
For each vector field $Z^i\in \lambda$, we can easily compute 
$$[\T_{\phi},Z^i]=\mu \sum_{k=1}^n \partial_{x^k}(Z^i\phi+c_i\phi)\partial_{v^k},$$ where $c_i=-2$ if $Z^i=L$, otherwise, $c_i=0$. This verifies equation (\ref{eq_comm}) for $|\alpha|=1$. We argue inductively on $|\alpha|$ to prove the general case. Observe that
$$[\T_\phi, Z^iZ^\alpha]=[\T_\phi, Z^i]Z^\alpha+Z^i[\T_\phi, Z^\alpha].$$
Since $[\T_\phi, Z^i]Z^\alpha$ has the required form, it remains to analyse the second term. Note that 
\begin{align*}
Z^i[\T_\phi, Z^\alpha]&= \sum_{\beta,\gamma}C^{\alpha}_{\beta \gamma} \sum_{k=1}^n  Z^i(\partial_{x^k} Z^{\gamma}\phi ) \partial_{v^k} Z^{\beta}+\partial_{x^k} Z^{\gamma}\phi Z^i( \partial_{v^k} Z^{\beta})\\
&=  \sum_{\beta,\gamma}C^{\alpha}_{\beta \gamma}\sum_{k=1}^n  \partial_{x^k} (Z^i Z^{\gamma}\phi) \partial_{v^k} Z^{\beta}+\partial_{x^k} Z^{\gamma}\phi \partial_{v^k} Z^i Z^{\beta}\\
&\qquad+  \sum_{\beta,\gamma}C^{\alpha}_{\beta \gamma}\sum_{k=1}^n   [Z^i, \partial_{x^k}] Z^{\gamma}\phi \partial_{v^k} Z^{\beta}+\partial_{x^k} Z^{\gamma}\phi [Z^i,\partial_{v^k}] Z^{\beta},
\end{align*}
where we have applied $Z^i$ to equation \eqref{eq_comm}, which is our inductive assumption. In the last equality the first term has the correct form and the second one behaves nicely for all choices of $Z^i$: if $Z^i$ is stable or unstable, then the commutators vanish; if $Z^i$ is a rotation the summation cancel out; and if $Z^i=L$ we note that $[L^i,\partial_x^i]=-\partial_x^i$, $[L^i,-\partial_v^i]=-\partial_v^i$. Therefore, the sum has always the required form.
\end{proof}

\begin{lemma}\label{lemma_commuted_poisson_equation}
Let $f$ be a sufficiently regular distribution function, and let $\phi$ be the
solution to the Poisson equation $\Delta \phi = \rho(f)$. Then, for any multi-index $\alpha$ the function $Z^{\alpha}\phi$ satisfies the equation $$\Delta Z^{\alpha}\phi=\sum_{|\beta|\leq |\alpha|}C^{\alpha}_{\beta}Z^{\beta}\rho(f),$$ for some constant coefficients $C^{\alpha}_{\beta}$.
\end{lemma}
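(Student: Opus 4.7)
The plan is to prove this by induction on $|\alpha|$, reducing to a short catalogue of commutators between the flat Laplacian $\Delta$ and each of the macroscopic vector fields in $\Lambda$. Since $\phi$ depends only on $(t,x)$ and the operators in $Z^\alpha$ (interpreted as macroscopic, per the abuse of notation established before Lemma \ref{lemma_commuting_in_lambda}) are built from vector fields in $\Lambda$, the commutators will only introduce terms of equal or lower order, and these can be collected into the required form.

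First I would compute $[\Delta,Z]$ for each $Z\in \Lambda$. The unstable, stable, and rotation fields $U_i^x=e^t\partial_{x^i}$, $S_i^x=e^{-t}\partial_{x^i}$, $R_{ij}^x=x^i\partial_{x^j}-x^j\partial_{x^i}$ commute with $\Delta$: for the translations this is immediate (the time-dependent weights are scalar in $x$), and for the rotations it follows from the standard fact that Euclidean rotations preserve $\Delta$. The only nontrivial case is the scaling $L^x=\sum_i x^i\partial_{x^i}$, for which a direct calculation using $[\partial_{x^k},x^i\partial_{x^i}]=\partial_{x^k}$ gives
\begin{equation*}
[\Delta, L^x]=2\Delta.
\end{equation*}

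Second, for the base case $|\alpha|=1$, applying any $Z\in\Lambda$ to $\Delta\phi=\rho(f)$ and using the commutator computation gives $\Delta(Z\phi)=Z\rho(f)$ when $Z\in\{U_i^x,S_i^x,R_{ij}^x\}$, and $\Delta(L^x\phi)=L^x\rho(f)+2\rho(f)$, which is exactly the desired form with constant coefficients. For the inductive step, write $Z^\alpha=Z\,Z^{\alpha'}$ with $Z\in\Lambda$ and $|\alpha'|=|\alpha|-1$, and compute
\begin{equation*}
\Delta Z^{\alpha}\phi = Z\,\Delta Z^{\alpha'}\phi + [\Delta, Z] Z^{\alpha'}\phi.
\end{equation*}
By the inductive hypothesis, $\Delta Z^{\alpha'}\phi=\sum_{|\beta|\leq |\alpha'|}C^{\alpha'}_{\beta}Z^{\beta}\rho(f)$, so the first term on the right is a linear combination of $Z Z^\beta\rho(f)$ with $|\beta|\leq|\alpha|-1$, which by Lemma \ref{lemma_commuting_in_lambda} can be rewritten as a combination of macroscopic operators of order $\leq|\alpha|$ applied to $\rho(f)$, with constant coefficients. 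The second term is either zero (if $Z\ne L^x$) or equal to $2\Delta Z^{\alpha'}\phi$ (if $Z=L^x$), and in the latter case the inductive hypothesis again rewrites it as a combination of $Z^{\beta}\rho(f)$ with $|\beta|\leq|\alpha|-1$. Collecting all contributions yields constant coefficients $C^\alpha_\beta$ with $|\beta|\leq|\alpha|$, as claimed.

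There is no serious obstacle here; the only point requiring care is the bookkeeping that ensures the coefficients remain constants (and not, for instance, $x$-dependent functions). This is guaranteed because the only nontrivial commutator $[\Delta,L^x]=2\Delta$ is a constant multiple of $\Delta$, and because the commutators among elements of $\Lambda$ given in the proof of Lemma \ref{lemma_commuting_in_lambda} again have constant (integer) coefficients. Thus the induction closes within the class of linear combinations $\sum_{|\beta|\leq|\alpha|}C^{\alpha}_{\beta}Z^{\beta}\rho(f)$ with constant $C^{\alpha}_{\beta}$.
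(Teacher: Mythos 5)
Your proof is correct and follows essentially the same route as the paper: compute $[\Delta,Z]$ for each $Z\in\Lambda$ (zero except for $[\Delta,L^x]=2\Delta$), then induct via $\Delta Z Z^{\alpha'}\phi = Z\,\Delta Z^{\alpha'}\phi + [\Delta,Z]Z^{\alpha'}\phi$. The only difference is cosmetic: your invocation of Lemma \ref{lemma_commuting_in_lambda} is unnecessary, since $ZZ^\beta$ is already a macroscopic operator of order $|\beta|+1\le|\alpha|$ in the paper's notation.
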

\begin{proof}
Note that $[\Delta,Z]=0$ for any $Z\in\Lambda\setminus\{L^x\}$, and that  $[\Delta,L^x]=2\Delta$. For $|\alpha|=1$ the result holds trivially. For higher order derivatives we proceed by induction and use that 
\begin{align*}
    \Delta Z^i Z^\alpha\phi&=Z^i \Delta Z^\alpha\phi+[\Delta,Z^i]Z^\alpha\phi,
\end{align*}
noticing that $[\Delta,Z^i]$ is either equal to zero, or to a multiple of $\Delta.$
\end{proof}

\section{Decay of velocity averages for the linearized system}\label{section_exponential_decay_velocity_averages}

In this section, we begin by proving weighted Sobolev inequalities for arbitrary finite energy distribution functions by exploiting the weights contained in the set of macroscopic vector fields $\Lambda_0$ and the set of microscopic vector fields $\lambda_0$. As a result, we prove sharp quantitative decay estimates in space and time for the spatial density induced by solutions to the linear Vlasov equation (\ref{vlasov_linear_flow}). We also obtain improved decay estimates for derivatives of the spatial density. 

\subsection{Weighted Sobolev inequalities}

First, we prove a weighted Sobolev inequality for the spatial density induced by arbitrary finite energy distribution functions. 

\begin{proposition}\label{proposition_weighted_sobolev_linear_vlasov}
For every sufficiently regular distribution function $f$, the induced spatial density satisfies that 
\begin{equation}\label{prop_1}
    |\rho(f)(t,x)|\lesssim \dfrac{1}{(e^t+|x|)^{n}}\sum_{|\alpha|\leq n}\sum_{Z^{\alpha}\in \lambda_0^{|\alpha|}} \|Z^{\alpha}f\|_{L^1_{x,v}},
\end{equation}
for every $t\geq0$ and every $x\in \R^n$.
\end{proposition}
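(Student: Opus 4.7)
The plan is to combine a rescaled $L^1$-Sobolev inequality on an appropriately chosen ball with the weighted differential identity of Lemma \ref{lemma_linear_vlasov_weight_vector_fields} and the macroscopic/microscopic comparison of Lemma \ref{lemma_connection_microscopic_macroscopic_vector_fields}.

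First, I would pick a scale $r>0$ comparable to $e^t+|x|$ so that $e^t+|y|\sim r$ uniformly for every $y\in B_r(x)$. Concretely, I would take $r:=e^t$ in the regime $|x|\le 2e^t$, and $r:=|x|/2$ in the regime $|x|>2e^t$; in both cases the triangle inequality gives $r\lesssim e^t+|y|\lesssim r$ on $B_r(x)$, and $r\sim e^t+|x|$.

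Second, after rescaling $y\mapsto x+rz$ and invoking the standard $L^1$-Sobolev embedding $W^{n,1}(B_1(0))\hookrightarrow L^\infty(B_1(0))$, one obtains
$$|\rho(f)(t,x)|\lesssim \sum_{k=0}^{n}r^{k-n}\,\|\nabla_x^k\rho(f)(t,\cdot)\|_{L^1_x(B_r(x))}.$$
At this point Lemma \ref{lemma_linear_vlasov_weight_vector_fields} lets me rewrite every Cartesian derivative as
$$\partial_y^\alpha=(e^t+|y|)^{-|\alpha|}\sum_{|\beta|\le|\alpha|}\sum_{Z^\beta\in\Lambda_0^{|\beta|}}C_\beta(y)\,Z^\beta,$$
with $|C_\beta|\lesssim 1$ by inspection of the coefficients $1$, $x^i/|x|$, $x^j/|x|$ appearing in the identity preceding Lemma \ref{lemma_linear_vlasov_weight_vector_fields}. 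Because $e^t+|y|\sim r$ on $B_r(x)$, each $k$-th derivative contributes a factor $r^{-k}$, the algebraic factors $r^{k}\cdot r^{-k}$ cancel, and the previous display collapses to
$$|\rho(f)(t,x)|\lesssim r^{-n}\sum_{|\beta|\le n}\sum_{Z^\beta\in\Lambda_0^{|\beta|}}\|Z^\beta\rho(f)(t,\cdot)\|_{L^1_x}.$$

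Third, Lemma \ref{lemma_connection_microscopic_macroscopic_vector_fields} writes each $Z^\beta\rho(f)$ as a linear combination of velocity averages $\rho(Z^\gamma f)$ of microscopic derivatives of orders $|\gamma|\le|\beta|$, and Fubini gives $\|\rho(Z^\gamma f)\|_{L^1_x}\le\|Z^\gamma f\|_{L^1_{x,v}}$. Substituting $r\sim e^t+|x|$ yields the claimed estimate
$$|\rho(f)(t,x)|\lesssim\dfrac{1}{(e^t+|x|)^n}\sum_{|\alpha|\le n}\sum_{Z^\alpha\in\lambda_0^{|\alpha|}}\|Z^\alpha f\|_{L^1_{x,v}}.$$

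The main obstacle, modest as it is, is the scale selection in the first step; it is what allows the $y$-dependent weights $(e^t+|y|)^{-|\alpha|}$ delivered by Lemma \ref{lemma_linear_vlasov_weight_vector_fields} to be converted into the single uniform factor $(e^t+|x|)^{-n}$ on the right-hand side. The rest is bookkeeping of the $L^1$-Sobolev inequality and of the commutator identities established in Section \ref{preliminaries_unstable_potential}.
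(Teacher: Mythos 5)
Your proposal is correct and follows essentially the same route as the paper: rescale to a ball of radius comparable to $e^t+|x|$, apply the $L^1$-Sobolev embedding, convert the resulting weighted Cartesian derivatives into $\Lambda_0$ vector fields via Lemma~\ref{lemma_linear_vlasov_weight_vector_fields}, and pass from macroscopic to microscopic operators with Lemma~\ref{lemma_connection_microscopic_macroscopic_vector_fields}. The only cosmetic difference is that you choose the scale $r$ piecewise in the two regimes $|x|\le 2e^t$ and $|x|>2e^t$, whereas the paper simply rescales by $e^t+|x|$ and verifies the one-sided comparison $e^t+|x+(e^t+|x|)y|\gtrsim e^t+|x|$ on $B(0,1/2)$; both give the same bound.
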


\begin{proof}
Given a point $(t,x)\in \R\times \R^{n}$, we set the function $\widetilde{\rho}: \R^{n}\to \R$ given by $\widetilde{\rho}(y)=\rho(f)(t,x+(e^t+|x|)y)$. Applying the standard Sobolev inequality, we have that \begin{align}\label{sobolev}|\rho(f)(t,x)|=|\widetilde{\rho}(0)|\leq \sum_{|\alpha|\leq n}\|\partial_y^{\alpha}\widetilde{\rho}\|_{L^1(B_{n}(0,1/2))},
\end{align}
where $B_{n}(0,1/2)$ denotes the open ball in $\R^{n}_y$ of radius $1/2$. By the chain rule, we have that $$\partial_{y^j}\widetilde{\rho}(y)=(e^t+|x|)\partial_{x^j}\rho(f)(t,x+(e^t+|x|)y).$$ Hence, the derivatives $\partial_y^{\alpha}\widetilde{\rho}$ can be bounded for every $y\in B_{n}(0,1/2)$ and $|\alpha|\le n$ by
\begin{align*}
    |\partial_y^{\alpha}\widetilde{\rho}(y)|& =(e^t+|x|)^{|\alpha|}|\partial_x^{\alpha}\rho(f)(t,x+(e^t+|x|)y)|\\
    & \lesssim (e^t+|x+(e^t+|x|)y|)^{|\alpha|}|\partial_x^{\alpha}\rho(f)(t,x+(e^t+|x|)y)|\\
    &\lesssim \sum_{|\beta|\leq |\alpha|}\sum_{Z^{\beta}\in \Lambda_0^{|\beta|}}|Z^{\beta}\rho(f)(t,x+(e^t+|x|)y)|,
\end{align*}
where in the second inequality we have compared $\min_{y\in B_{n}(0,1/2)} e^t+|x+(e^t+|x|)y|$ with $e^t+|x|$, and in the last inequality we have used Lemma \ref{lemma_linear_vlasov_weight_vector_fields}. Integrating in the $y$ coordinate, applying the change of variables $z=(e^t+|x|)y$, and using the Sobolev inequality (\ref{sobolev}) we obtain 
\begin{equation}\label{inequality_rho}
    |\rho(f)(t,x)|\lesssim \dfrac{1}{(e^t+|x|)^{n}}\sum_{|\beta|\leq n}\sum_{Z^{\beta}\in \Lambda_0^{|\beta|}} \|Z^{\beta}\rho(f)\|_{L^1_{x}}.
\end{equation}
Finally, we use Lemma \ref{lemma_connection_microscopic_macroscopic_vector_fields} to conclude the proof of the proposition.
\end{proof}

We proceed to prove another weighted Sobolev inequality for the spatial density induced by \emph{absolute values} of arbitrary finite energy distribution functions. The proof follows by a slightly different argument as the one obtained for Proposition \ref{proposition_weighted_sobolev_absolute_values}.

\begin{proposition}\label{proposition_weighted_sobolev_absolute_values}
For every sufficiently regular distribution function $f$, the induced spatial density by its absolute value satisfies that
\begin{equation}
        \rho(|f|)(t,x)\lesssim \dfrac{1}{(e^t+|x|)^{n}}\sum_{|\alpha|\leq n}\sum_{Z^{\alpha}\in \lambda_0^{|\alpha|}} \|Z^{\alpha}f\|_{L^1_{x,v}},
\end{equation}
for every $t\geq0$ and every $x\in \R^n$.
\end{proposition}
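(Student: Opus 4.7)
The plan is to mimic the argument of Proposition \ref{proposition_weighted_sobolev_linear_vlasov} with one essential modification: the Sobolev embedding $W^{n,1}(B_n(0,1/2))\hookrightarrow L^\infty$ is applied pointwise in the velocity variable rather than to the spatial density itself. The reason for this change is that the function $y\mapsto \rho(|f|)(t,x+(e^t+|x|)y)$ is only Lipschitz in $y$ (since $|f|$ is only Lipschitz in $x$), so the Sobolev inequality cannot be invoked in the form used in the previous proof.

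Concretely, for each fixed $v\in\R^n_v$, the function $g_v(y):=f(t,x+(e^t+|x|)y,v)$ is smooth, and the Sobolev embedding gives
\[
|f(t,x,v)| \;=\; |g_v(0)| \;\lesssim\; \sum_{|\alpha|\leq n} \|\partial_y^{\alpha} g_v\|_{L^1(B_n(0,1/2))}.
\]
Integrating over $v$ and using Fubini produces an inequality of the form $\rho(|f|)(t,x)\lesssim \sum_{|\alpha|\leq n}\int\int_{B_n(0,1/2)}|\partial_y^{\alpha}g_v(y)|\,dy\,dv$. The subsequent manipulations mirror those of Proposition \ref{proposition_weighted_sobolev_linear_vlasov}: the chain rule gives $\partial_y^{\alpha} g_v(y) = (e^t+|x|)^{|\alpha|}\partial_x^{\alpha} f(t,x+(e^t+|x|)y,v)$, the substitution $z = x+(e^t+|x|)y$ produces the prefactor $(e^t+|x|)^{-n}$, the equivalence $e^t+|x|\sim e^t+|z|$ on the image ball $\Omega_x$ is used, and Lemma \ref{lemma_linear_vlasov_weight_vector_fields} is applied pointwise to control $(e^t+|z|)^{|\alpha|}|\partial_x^{\alpha} f(z,v)|$ by $\sum_{|\beta|\leq |\alpha|}\sum_{Z^{\beta}\in\Lambda_0^{|\beta|}}|Z^{\beta}f(z,v)|$ with uniformly bounded coefficients.

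At this stage the argument yields an intermediate macroscopic estimate of the form $\rho(|f|)(t,x)\lesssim (e^t+|x|)^{-n} \sum_{|\beta|\leq n}\sum_{Z^{\beta}\in\Lambda_0^{|\beta|}} \|Z^{\beta}f\|_{L^1_{x,v}}$, and the remaining task is to replace the macroscopic vector fields by microscopic ones in $\lambda_0$. This is the step I expect to be the main obstacle: in the proof of Proposition \ref{proposition_weighted_sobolev_linear_vlasov} the conversion is carried out via Lemma \ref{lemma_connection_microscopic_macroscopic_vector_fields}, which operates at the level of $\rho(f)$, and therefore does not directly apply to the $L^1_{x,v}$ norms appearing here. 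I would resolve this by exploiting the Lipschitz chain rule $Z|f| = \mathrm{sgn}(f)\,Zf$ (valid almost everywhere for each first-order vector field $Z\in\lambda_0$) together with integration by parts in the $v$ variable, which cancels the $\partial_v$-components of the microscopic vector fields up to lower-order terms absorbed into $\|f\|_{L^1_{x,v}}$. Higher-order iterations produce distributional contributions supported on $\{f=0\}$, which I anticipate to control via a mollification argument, so that each $\|Z^{\beta}_{\mathrm{macro}} f\|_{L^1_{x,v}}$ is ultimately dominated by $\sum_{|\gamma|\leq |\beta|}\|Z^{\gamma}f\|_{L^1_{x,v}}$ with microscopic $Z^{\gamma}\in\lambda_0^{|\gamma|}$, completing the proof.
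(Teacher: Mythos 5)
Your diagnosis of the obstacle is correct — $\rho(|f|)$ is only Lipschitz, so the $n$-fold Sobolev embedding cannot be applied to it directly as in Proposition~\ref{proposition_weighted_sobolev_linear_vlasov}. But your workaround runs into a fatal structural problem, and the paper handles the regularity issue in a different and essential way.

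The issue with your route is where Fubini places the absolute value. After applying $W^{n,1}\hookrightarrow L^\infty$ to $g_v(y)=f(t,x+(e^t+|x|)y,v)$ and integrating in $v$, you arrive (after the change of variables and Lemma~\ref{lemma_linear_vlasov_weight_vector_fields}) at a bound of the form
\[
\rho(|f|)(t,x)\lesssim \frac{1}{(e^t+|x|)^n}\sum_{|\beta|\le n}\sum_{Z^\beta\in\Lambda_0^{|\beta|}}\int_{\R^n}\!\int_{\R^n}|Z^\beta f(t,z,v)|\,dv\,dz ,
\]
where the $Z^\beta$ are \emph{macroscopic}. The absolute value now sits \emph{inside} the $v$-integral, so integration by parts in $v$ — the mechanism by which $\partial_v$-components are supplied for free — is no longer available. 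Concretely, for the macroscopic vector field $U_1^x=e^t\partial_{x^1}$ one has
\[
e^t\partial_{x^1}f=\tfrac12 U_1 f+\tfrac{e^{2t}}{2}S_1 f ,
\]
so $\|U_1^x f\|_{L^1_{x,v}}$ is bounded neither by $\lambda_0$-norms (since $S_1\notin\lambda_0$) nor by $\lambda$-norms with $t$-uniform constants (the $e^{2t}$ factor is lethal). Similarly $\|L^x f\|_{L^1_{x,v}}\le\|Lf\|_{L^1_{x,v}}+\|\sum_i v^i\partial_{v^i}f\|_{L^1_{x,v}}$, and the last term is not controlled by $\mathcal{E}_n[f]$. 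Your proposed fix — ``Lipschitz chain rule for $Z|f|$ plus integration by parts plus mollification'' — is addressing the wrong problem: at this stage of your argument no vector field has been applied to $|f|$; all the operators act on $f$, so $Z|f|=\operatorname{sgn}(f)Zf$ is simply not the identity you need. The difficulty is genuinely $L^1_{x,v}$-norm-level and not removable by mollification.

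The paper's proof keeps the $v$-integral on the outside throughout. It applies a \emph{1D} Sobolev inequality iteratively, one variable at a time, directly to $\widetilde\psi(y)=\rho(|f|)(t,x+(e^t+|x|)y)$, using only that $|\psi|\in W^{1,1}$ with $|\partial|\psi||\le|\partial\psi|$. The crucial order of operations at each step is: first integrate by parts in $v$ inside $\int(\cdot)\,|f|\,dv$ to convert the macroscopic contribution (e.g.\ $e^t\partial_{x^1}$, or the $R^x$/$L^x$ pieces from $|x|\partial_{x^1}$) into a microscopic vector field in $\lambda_0$ acting on $|f|$, \emph{then} apply the Lipschitz bound $|Z|f||\le|Zf|$. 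In your setup the integration by parts has already been forfeited by the time the absolute value appears, so the conversion step cannot be carried out; this is not a technical gap but a structural one, and the proof would need to be reorganized along the paper's lines.
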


\begin{proof}
Similarly as in the proof of Proposition \ref{proposition_weighted_sobolev_linear_vlasov}, we define a real-valued function $\widetilde{\psi}:B_{n}(0,1/2)\to \R$ given by $$\widetilde{\psi}(y):=\int_{\R^{n}}|f|\Big(t,x+(e^t+|x|)y,v\Big)dv.$$ Using a 1D Sobolev inequality with $\delta=\frac{1}{4n}$, we have
\begin{equation}
    \widetilde{\psi}(0)\leq C\int_{|y_1|\leq \delta^{1/2}} |\partial_{y_1}\widetilde{\psi}(y_1,0\dots,0)|+|\widetilde{\psi}(y_1,0\dots,0)|dy_1,
\end{equation}
where we have used that for a function $\psi\in W^{1,1}$, the absolute value of $\psi$ belongs to $W^{1,1}$, and satisfies $|\partial |\psi||\leq |\partial \psi|$. Moreover, the derivative in the previous integral can be written as
\begin{align*}
    \partial_{y_1}\widetilde{\psi}(y_1,0,\dots,0)&=\int_{\R^{n}} e^t\partial_{x^1}|f|\Big(t,x+(e^t+|x|)(y_1,0,\dots,0),v\Big)dv\\
    &\qquad +\int_{\R^{n}} |x|\partial_{x^1}|f|\Big(t,x+(e^t+|x|)(y_1,0,\dots,0),v\Big)dv.
\end{align*}
The first integral term of the derivative above can be estimated using integration by parts in the velocity variables to obtain
\begin{align*}
    |\partial_{y_1}\widetilde{\psi}(y_1,0,\dots,0)|&\leq \Big|\int_{\R^{n}} e^t\partial_{x^1}|f|\Big(t,x+(e^t+|x|)(y_1,0,\dots,0),v\Big)dv\Big|\\
    &\leq \int_{\R^{n}}\Big|e^t(\partial_{x^1}+\partial_{p^1})f\Big(t,x+(e^t+|x|)(y_1,0,\dots,0),v\Big)\Big|dv,
\end{align*}
and similarly for the second integral term of the derivative above. As a result, we have that 
\begin{equation}
    |\partial_{y_1}\widetilde{\psi}(y_1,0,\dots,0)|\leq \sum_{Z\in \lambda}\int_{\R^{n}}\Big|Zf\Big(t,x+(e^t+|x|)(y_1,0,\dots,0),v\Big)\Big|dv,
\end{equation}
which can be used to estimate $\widetilde{\psi}$ by 
\begin{align*}
\widetilde{\psi}(0)&\leq \sum_{Z\in \lambda}\int_{|y_1|\leq \delta^{1/2}}\int_{\R^{n}}\Big|Zf\Big(t,x+(e^t+|x|)(y_1,0,\dots,0),v\Big)\Big|dvdy_1 \\
&\quad+ \int_{|y_1|\leq \delta^{1/2}}\int_{\R^{n}}\Big|f\Big(t,x+(e^t+|x|)(y_1,0,\dots,0),v\Big)\Big|dvdy_1.
\end{align*}
Iterating this argument for all the variables in space, we obtain
\begin{equation}
    \widetilde{\psi}(0)\leq \sum_{Z^{\alpha}\in \lambda^{|\alpha|}}\int_{y\in B_{n}(0,1/2)}\int_{\R^{n}}\Big|Z^{\alpha}f\Big(t,x+(e^t+|x|)y,v\Big)\Big|dvdy,
\end{equation}
from which the proof of the proposition follows using the change of variables $z=(t+|x|)y$.
\end{proof}

Finally, we obtain improved decay estimates for derivatives of the spatial density by applying the weighted Sobolev inequality in Proposition \ref{proposition_weighted_sobolev_linear_vlasov} combined with Lemma \ref{lemma_linear_vlasov_derivatives_spatial_density} and Lemma \ref{lemma_linear_vlasov_weight_vector_fields}. 

\begin{proposition}[Improved decay estimates for derivatives of the spatial density]\label{proposition_improved_decay_spatial_density}
For every sufficiently regular distribution function $f$, the induced spatial density satisfies that  
\begin{equation}
    |\partial^{\alpha}_x\rho(f)(t,x)|\lesssim \dfrac{1}{(e^t+|x|)^{n+|\alpha|}}\sum_{|\beta|\leq n+|\alpha|}\sum_{Z^{\beta}\in \lambda_0^{|\beta|}} \|Z^{\beta}f\|_{L^1_{x,v}},
\end{equation}
for every $t\geq 0$, every $x\in \R^{n}$, and every multi-index $\alpha$.
\end{proposition}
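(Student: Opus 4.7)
The plan is to use Lemma \ref{lemma_linear_vlasov_weight_vector_fields} to convert the standard macroscopic derivatives $\partial_x^\alpha$ into a weighted combination of macroscopic vector fields in $\Lambda_0$, then pass from these to microscopic vector fields acting on $f$ using Lemma \ref{lemma_connection_microscopic_macroscopic_vector_fields}, and finally apply the weighted Sobolev inequality from Proposition \ref{proposition_weighted_sobolev_linear_vlasov} to the resulting velocity averages.

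More concretely, I would proceed as follows. First, multiplying both sides by $(e^t+|x|)^{|\alpha|}$ and applying Lemma \ref{lemma_linear_vlasov_weight_vector_fields} to the operator $\partial_x^\alpha$ acting on the function $\rho(f)(t,\cdot)$, one obtains
\begin{equation*}
(e^t+|x|)^{|\alpha|}|\partial_x^\alpha \rho(f)(t,x)| \lesssim \sum_{|\beta|\leq|\alpha|}\sum_{Z^\beta\in\Lambda_0^{|\beta|}} |Z^\beta \rho(f)(t,x)|.
\end{equation*}
Next, Lemma \ref{lemma_connection_microscopic_macroscopic_vector_fields} rewrites each term $Z^\beta \rho(f)$ with $Z^\beta\in \Lambda_0^{|\beta|}$ as a linear combination of velocity integrals $\rho(Z^\gamma f)$ with $Z^\gamma\in \lambda_0^{|\gamma|}$ for $|\gamma|\leq|\beta|$. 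This is the crucial step that exchanges the macroscopic weight $(e^t+|x|)^{|\alpha|}$ on the left for microscopic vector fields acting on $f$ on the right, at the price of at most $|\alpha|$ extra derivatives.

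To conclude, I would apply the weighted Sobolev inequality of Proposition \ref{proposition_weighted_sobolev_linear_vlasov} to each $Z^\gamma f$ in place of $f$, noting that derivatives in $\lambda_0^{|\gamma|}$ composed with those of Proposition \ref{proposition_weighted_sobolev_linear_vlasov} remain in $\lambda_0^{|\gamma|+n}$ modulo Lemma \ref{lemma_commuting_in_lambda}, so that
\begin{equation*}
|\rho(Z^\gamma f)(t,x)| \lesssim \frac{1}{(e^t+|x|)^n} \sum_{|\delta|\leq n}\sum_{Z^\delta\in \lambda_0^{|\delta|}} \|Z^\delta Z^\gamma f\|_{L^1_{x,v}}.
\end{equation*}
Combining the three estimates and dividing by $(e^t+|x|)^{|\alpha|}$ yields the claimed bound with the required index set $|\beta|\leq n+|\alpha|$.

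There is no serious obstacle here; the argument is essentially an assembly of the previously proven lemmas. The only mild bookkeeping point is ensuring that the compositions of operators remain in $\lambda_0^{|\beta|}$ (and not $\lambda^{|\beta|}$), which is guaranteed since Lemmata \ref{lemma_linear_vlasov_weight_vector_fields} and \ref{lemma_connection_microscopic_macroscopic_vector_fields} only produce operators from $\Lambda_0$ and $\lambda_0$ respectively, and Proposition \ref{proposition_weighted_sobolev_linear_vlasov} introduces further derivatives only from $\lambda_0$.
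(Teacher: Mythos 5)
Your proposal is correct and rests on exactly the same ingredients as the paper's argument: Lemma \ref{lemma_linear_vlasov_weight_vector_fields}, Lemma \ref{lemma_connection_microscopic_macroscopic_vector_fields}, and Proposition \ref{proposition_weighted_sobolev_linear_vlasov}. The only difference is the order of application. The paper first applies the intermediate estimate \eqref{inequality_rho} (the Sobolev step) to the function $\partial_x^\alpha\rho(f)$ and only afterwards invokes commutations and Lemma \ref{lemma_linear_vlasov_weight_vector_fields}, whereas you extract the pointwise weight $(e^t+|x|)^{|\alpha|}$ at the base point $x$ first, convert $\partial_x^\alpha$ into $\Lambda_0$-operators and then into $\lambda_0$-operators on $f$, and finally apply Proposition \ref{proposition_weighted_sobolev_linear_vlasov} as a black box to each $\rho(Z^\gamma f)$. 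Your ordering is arguably cleaner, since it makes transparent where the extra $(e^t+|x|)^{-|\alpha|}$ decay comes from (it is a pointwise relation at the fixed $x$, not something pulled out of the $L^1_x$ integral), and it avoids re-entering the proof of Proposition \ref{proposition_weighted_sobolev_linear_vlasov}. You also correctly flag the small bookkeeping point that all intermediate operators stay in $\Lambda_0$ and $\lambda_0$, so the stable vector fields $S_i$ never appear.
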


\begin{proof}
Applying the estimate (\ref{inequality_rho}) obtained in the proof of Proposition \ref{proposition_weighted_sobolev_linear_vlasov}, we have that
\begin{equation}
    |\partial_x^{\alpha}\rho(f)(t,x)|\lesssim \dfrac{1}{(e^t+|x|)^{n}}\sum_{|\beta|\leq n}\sum_{Z^{\beta}\in \Lambda_0^{|\beta|}} \|Z^{\beta}\partial_x^{\alpha}\rho(f)\|_{L^1_x}. 
\end{equation}
The improved decay for derivatives of the spatial density follows by commuting the differential operators $Z^{\beta}$, $\partial_x^{\alpha}$, and using Lemma \ref{lemma_linear_vlasov_weight_vector_fields}.
\end{proof}

\subsection{Applications to solutions to the Vlasov equation with the potential \texorpdfstring{$\frac{-|x|^2}{2}$}{x2}}

The weighted Sobolev inequality in Proposition \ref{proposition_weighted_sobolev_linear_vlasov} shows that the spatial density induced by solutions to the linear Vlasov equation (\ref{vlasov_linear_flow}) decay quantitatively in space and time.

\begin{corollary}\label{corollary_decay_linear_vlasov}
Let $f_0$ be a sufficiently regular initial data for the Vlasov equation (\ref{vlasov_linear_flow}). Then, the induced spatial density for the corresponding solution $f$ to the Vlasov equation with the trapping potential $\frac{-|x|^2}{2}$ satisfies \begin{equation}
        |\rho(f)(t,x)|\lesssim \dfrac{1}{(e^t+|x|)^{n}}\sum_{|\alpha|\leq n}\sum_{Z^{\alpha}\in \lambda_0^{|\alpha|}} \|Z^{\alpha}f_0\|_{L^1_{x,v}},
\end{equation}
for every $t\geq 0$, and every $x\in \R^{n}$.
\end{corollary}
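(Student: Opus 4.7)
The plan is to derive the stated decay as a direct combination of the weighted Sobolev inequality of Proposition \ref{proposition_weighted_sobolev_linear_vlasov} with the conservation law recorded in Corollary \ref{corollary_conservation_law_differential_operators}. The key observation is that Proposition \ref{proposition_weighted_sobolev_linear_vlasov} applies to any sufficiently regular distribution function, so in particular it applies to the time slice $f(t,\cdot,\cdot)$ of the solution, with no use made of the fact that $f$ solves an equation; the dynamics only enter through the conservation law for the microscopic $L^1$ norms.

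Concretely, I would first fix $t \geq 0$ and $x \in \R^n$ and apply Proposition \ref{proposition_weighted_sobolev_linear_vlasov} to $f(t,\cdot,\cdot)$ to obtain
\begin{equation*}
|\rho(f)(t,x)|\lesssim \dfrac{1}{(e^t+|x|)^{n}}\sum_{|\alpha|\leq n}\sum_{Z^{\alpha}\in \lambda_0^{|\alpha|}} \|Z^{\alpha}f(t)\|_{L^1_{x,v}}.
\end{equation*}
Next, since $\lambda_0 \subset \lambda$ by definition, every differential operator $Z^{\alpha}\in \lambda_0^{|\alpha|}$ is a composition of $|\alpha|$ vector fields in $\lambda$, so it falls within the scope of Corollary \ref{corollary_conservation_law_differential_operators}. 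Applying the latter yields $\|Z^{\alpha}f(t)\|_{L^1_{x,v}}=\|Z^{\alpha}f_0\|_{L^1_{x,v}}$ for each such multi-index $\alpha$. Substituting this identity into the previous bound gives exactly the desired estimate.

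There is essentially no obstacle here: the corollary is a two-line consequence of two results already established in Sections \ref{preliminaries_unstable_potential}--\ref{section_exponential_decay_velocity_averages}. The only small point to verify is that the conservation law indeed extends to compositions of vector fields, but that is precisely the content of Corollary \ref{corollary_conservation_law_differential_operators}, which was proved by commuting $\partial_t + X$ successively with each factor in $Z^{\alpha}$ using Lemma \ref{lemma_commutators_Vlasov_external_potential}. Thus the weighted Sobolev inequality transfers the spatial-time decay factor $(e^t+|x|)^{-n}$ from the solution to the data, and the initial energy $\sum_{|\alpha|\leq n}\sum_{Z^{\alpha}\in \lambda_0^{|\alpha|}}\|Z^{\alpha}f_0\|_{L^1_{x,v}}$ acts as the finite constant controlling the profile of $\rho(f)$ uniformly in $t$.
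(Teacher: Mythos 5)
Your proposal is correct and matches exactly what the paper intends: apply Proposition \ref{proposition_weighted_sobolev_linear_vlasov} to the time slice $f(t,\cdot,\cdot)$ and then replace each $\|Z^{\alpha}f(t)\|_{L^1_{x,v}}$ by $\|Z^{\alpha}f_0\|_{L^1_{x,v}}$ via Corollary \ref{corollary_conservation_law_differential_operators} (which applies since $\lambda_0\subset\lambda$). The paper states this as an immediate consequence without further detail, and your two-step chain is precisely that argument.
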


The weighted Sobolev inequality in Proposition \ref{proposition_weighted_sobolev_absolute_values} shows that the spatial density induced by the absolute value of solutions to the linear Vlasov equation (\ref{vlasov_linear_flow}) decay quantitatively in space and time.

\begin{corollary}\label{corollary_decay_linear_vlasov_absolute_values}
Let $f_0$ be a sufficiently regular initial data for the Vlasov equation (\ref{vlasov_linear_flow}). Then, the induced spatial density for the corresponding solution $f$ to the Vlasov equation with the trapping potential $\frac{-|x|^2}{2}$ satisfies \begin{equation}
        \rho(|f|)(t,x)\lesssim \dfrac{1}{(e^t+|x|)^{n}}\sum_{|\alpha|\leq n}\sum_{Z^{\alpha}\in \lambda_0^{|\alpha|}} \|Z^{\alpha}f_0\|_{L^1_{x,v}},
\end{equation}
for every $t\geq 0$, and every $x\in \R^{n}$.
\end{corollary}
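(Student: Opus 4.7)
The proof is a direct combination of two previously established results, so the plan is short. I would apply the weighted Sobolev inequality of Proposition \ref{proposition_weighted_sobolev_absolute_values} pointwise in time to the solution $f(t,\cdot,\cdot)$ of the linear Vlasov equation \eqref{vlasov_linear_flow}. This immediately yields
\begin{equation*}
\rho(|f|)(t,x) \lesssim \frac{1}{(e^t+|x|)^n} \sum_{|\alpha|\leq n} \sum_{Z^\alpha \in \lambda_0^{|\alpha|}} \|Z^\alpha f(t)\|_{L^1_{x,v}},
\end{equation*}
for every $t\geq 0$ and $x\in\R^n$, reducing the problem to controlling $\|Z^\alpha f(t)\|_{L^1_{x,v}}$ uniformly in time.

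For this control, I would invoke the commutation property of Lemma \ref{lemma_commutators_Vlasov_external_potential}: every $Z\in \lambda$ (and in particular every $Z \in \lambda_0$) commutes with the transport operator $\partial_t + X$, so iterating shows that $Z^\alpha f$ is itself a solution of \eqref{vlasov_linear_flow} with initial datum $Z^\alpha f_0$. Since the linear transport operator preserves the $L^1_{x,v}$ norm along the flow, Corollary \ref{corollary_conservation_law_differential_operators} supplies
\begin{equation*}
\|Z^\alpha f(t)\|_{L^1_{x,v}} = \|Z^\alpha f_0\|_{L^1_{x,v}}, \qquad \text{for all } t\geq 0.
\end{equation*}
Plugging this identity into the weighted Sobolev estimate above produces the stated bound.

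There is no real obstacle here, since both inputs are fully established in the preceding subsections; the only thing to be careful about is checking that $\lambda_0 \subset \lambda$, so that the conservation law genuinely applies to each commuted solution that appears on the right-hand side of Proposition \ref{proposition_weighted_sobolev_absolute_values}. This is immediate from the definitions of $\lambda$ and $\lambda_0$ given in Subsection \ref{subsection_macro_micro_vector_field}, and the argument is completely parallel to the one used to pass from Proposition \ref{proposition_weighted_sobolev_linear_vlasov} to Corollary \ref{corollary_decay_linear_vlasov}.
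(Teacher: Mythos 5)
Your proposal is correct and follows exactly the paper's intended route: apply the weighted Sobolev inequality of Proposition \ref{proposition_weighted_sobolev_absolute_values} at a fixed time $t$, then use the conservation law of Corollary \ref{corollary_conservation_law_differential_operators} (valid since $\lambda_0\subset\lambda$) to replace $\|Z^\alpha f(t)\|_{L^1_{x,v}}$ by $\|Z^\alpha f_0\|_{L^1_{x,v}}$. This is the same two-step argument the paper uses for the analogous Corollary \ref{corollary_decay_linear_vlasov}, so there is nothing to add.
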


Finally, the improved decay estimates for derivatives of the spatial density in Proposition \ref{proposition_improved_decay_spatial_density} shows that the derivatives of the spatial density induced by solutions to the linear Vlasov equation (\ref{vlasov_linear_flow}) decay quantitatively in space and time.

\begin{corollary} \label{corollary_improved_decay_spatial_density}
Let $f_0$ be a sufficiently regular initial data for the Vlasov equation (\ref{vlasov_linear_flow}). Then, the induced spatial density for the corresponding solution $f$ to the Vlasov equation with the trapping potential $\frac{-|x|^2}{2}$ satisfies 
\begin{equation}
    |\partial^{\alpha}_x\rho(f)(t,x)|\lesssim \dfrac{1}{(e^t+|x|)^{n+|\alpha|}}\sum_{|\beta|\leq n+|\alpha|}\sum_{Z^{\beta}\in \lambda_0^{|\beta|}} \|Z^{\beta}f_0\|_{L^1_{x,v}},
\end{equation}
for every $t\geq 0$, every $x\in \R^{n}$, and every multi-index $\alpha$.
\end{corollary}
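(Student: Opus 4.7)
The plan is to observe that this corollary is an immediate consequence of two previously established facts: the general weighted Sobolev estimate for derivatives of the spatial density (Proposition \ref{proposition_improved_decay_spatial_density}), which holds for \emph{any} sufficiently regular distribution function, and the conservation law for higher order derivatives of solutions of the linear Vlasov equation (Corollary \ref{corollary_conservation_law_differential_operators}).

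First, I would fix $t \ge 0$ and apply Proposition \ref{proposition_improved_decay_spatial_density} to the distribution function $f(t, \cdot, \cdot)$ viewed as a function on $\R^n_x \times \R^n_v$, obtaining
\[
|\partial^{\alpha}_x\rho(f)(t,x)|\lesssim \dfrac{1}{(e^t+|x|)^{n+|\alpha|}}\sum_{|\beta|\leq n+|\alpha|}\sum_{Z^{\beta}\in \lambda_0^{|\beta|}} \|Z^{\beta}f(t)\|_{L^1_{x,v}}.
\]
Next, since $f$ solves the linear Vlasov equation \eqref{vlasov_linear_flow} and $\lambda_0 \subset \lambda$, Corollary \ref{corollary_conservation_law_differential_operators} yields $\|Z^{\beta} f(t)\|_{L^1_{x,v}} = \|Z^{\beta} f_0\|_{L^1_{x,v}}$ for every multi-index $\beta$ and every $Z^\beta \in \lambda_0^{|\beta|}$. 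Substituting this identity into the previous display gives exactly the claimed bound, uniformly for $t \ge 0$ and $x \in \R^n$.

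There is no real obstacle here, since all the analytical content — the weighted Sobolev inequality, the identification \eqref{identity_macroscopic_giving_decay} that converts the macroscopic weights $(e^t+|x|)^{|\alpha|}\partial_x^\alpha$ into vector fields in $\Lambda_0$, and the commutation of $\partial_t + X$ with every $Z \in \lambda$ — has already been established. The argument is purely a matter of chaining the weighted Sobolev inequality with the conservation of $L^1_{x,v}$ norms along the linear flow. In the same way, the two preceding corollaries \ref{corollary_decay_linear_vlasov} and \ref{corollary_decay_linear_vlasov_absolute_values} follow by combining Propositions \ref{proposition_weighted_sobolev_linear_vlasov} and \ref{proposition_weighted_sobolev_absolute_values} with Corollary \ref{corollary_conservation_law_differential_operators}, so the present statement is the natural analog for derivatives of $\rho(f)$.
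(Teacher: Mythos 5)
Your proposal is correct and takes precisely the approach the paper intends: the corollary follows immediately by applying Proposition \ref{proposition_improved_decay_spatial_density} to $f(t,\cdot,\cdot)$ and then replacing each $\|Z^{\beta}f(t)\|_{L^1_{x,v}}$ by $\|Z^{\beta}f_0\|_{L^1_{x,v}}$ via Corollary \ref{corollary_conservation_law_differential_operators}, noting that $\lambda_0\subset\lambda$ so the conservation law applies to every $Z^{\beta}\in\lambda_0^{|\beta|}$. The paper gives no separate proof for this corollary precisely because this chaining is immediate, and your observation that Corollaries \ref{corollary_decay_linear_vlasov} and \ref{corollary_decay_linear_vlasov_absolute_values} follow in exactly the same way is also accurate.
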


\section{Small data solutions for the Vlasov--Poisson system with the potential \texorpdfstring{$\frac{-|x|^2}{2}$}{x2}}\label{section_proof_theorem_dimension_higher_two}

In this section, we study the evolution in time of sufficiently regular small data solutions $f$ for the Vlasov--Poisson system with the trapping potential $\frac{-|x|^2}{2}$ in the energy space defined by the norm
$$\mathcal{E}_N[f]:=\sum_{|\alpha|\leq N}\sum_{ Z^{\alpha}\in {\lambda}^{|\alpha|}}\|Z^{\alpha}f\|_{L^1_{x,v}},$$ where $N\in\N$. We emphasize that this energy norm is stronger than the energy norms used to obtain weighted Sobolev inequalities in the previous section. Nonetheless, we can still use this norm to prove quantitative decay estimates for the spatial density induced by solutions of the non-linear system, a crucial ingredient of the global existence result. More precisely, we have included the vector fields contained in the stable invariant distribution of phase space in the energy norm used in this section. We incorporate these vector fields, as together with the unstable vector fields, they generate the standard basis $\{\partial_{x^i},\partial_{v^i}\}$ of the tangent space of $\R^n_x\times \R^n_v.$ We make use of this fact in the proof of Theorem \ref{theorem_stability_vacuum_external_potential}. 

\subsection{The bootstrap assumption}
The proof of Theorem \ref{theorem_stability_vacuum_external_potential} follows by a standard continuity argument. We aim to prove that for $\e>0$ sufficiently small, if the initial data satisfies $\mathcal{E}_N[f_0]\le \e$, then, the global energy estimate $\mathcal{E}_N[f(t)]\le 2\e$ holds for every $t\ge 0$. For this purpose, we define
\begin{equation}\label{bootstrap_assumption_energy_estimate}
T:=\sup\Big\{t\ge 0:\mathcal{E}_N[f(s)]\le 2\e \text{ for every } s\in [0,t]\Big\}.    
\end{equation}
In the following, we show that the energy of the distribution function satisfies $\mathcal{E}_N[f(t)]\leq \frac{3\e}{2}$ for every $t\in [0,T]$. Therefore, the supremum (\ref{bootstrap_assumption_energy_estimate}) is infinite, and we obtain global existence of small data solutions for the Vlasov--Poisson system with the trapping potential $\frac{-|x|^2}{2}$.

\subsection{Proof of Theorem \ref{theorem_stability_vacuum_external_potential}}

By the standard energy estimate for the commuted distribution function $Z^{\alpha}f$ in $L^1_{x,v}$, we obtain 
\begin{equation}
    \|Z^{\alpha}f(t)\|_{L^1_{x,v}}\leq \|Z^{\alpha}f(0)\|_{L^1_{x,v}} + \int_0^t \|\T_{\phi}(Z^{\alpha}f)(s)\|_{L^1_{x,v}}ds.
\end{equation}
Furthermore, we write the non-linear Vlasov equation for the commuted distribution function $Z^{\alpha}f$ as
\begin{equation}
    \T_{\phi}(Z^{\alpha}f)=\sum_{|\beta|\leq |\alpha|-1,}\sum_{|\gamma|+|\beta|\leq |\alpha|}C^{\alpha}_{\beta \gamma} \nabla_x Z^{\gamma}\phi \cdot \nabla_v Z^{\beta}f,
\end{equation}
by using the commutator in Lemma \ref{lemma_commuted_nonliner_Vlasov}. We write the gradient in the velocity variables on the previous identity using the stable and unstable vector fields contained in $\lambda$ by
\begin{equation}
    \partial_{v^i} Z^{\beta}f=\dfrac{1}{2e^t}\Big(e^t(\partial_{x^i}+\partial_{v^i})Z^{\beta}f\Big)-\dfrac{e^t}{2}\Big(e^{-t}(\partial_{x^i}-\partial_{v^i})Z^{\beta}f\Big),
\end{equation}
to obtain the bound 
\begin{equation}\label{estimate_Vlasov_for_commuted_distribution_ growing_factor}
    \|\T_{\phi}(Z^{\alpha}f)\|_{L^1_{x,v}}\lesssim e^t \Big(\sum_{1\leq |\beta|\leq |\alpha|,}\sum_{|\gamma|+|\beta|\leq |\alpha|+1}\|\nabla_x Z^{\gamma}(\phi)   Z^{\beta}f\|_{L^1_{x,v}}\Big)
\end{equation}
for the non-linear contribution in the energy estimate for the commuted distribution function $Z^{\alpha}f$. In order to bound the non-linear terms $\|\nabla_xZ^\gamma(\phi)Z^\beta f\|_{L^1_{x,v}}$, we follow the strategy used by Duan \cite{Du22} to prove the stability of the vacuum solution for the Vlasov--Poisson system. More precisely, we make use of the explicit form of the Green function for the Poisson equation in $\R^n$ to estimate the gradient $\nabla_x Z^\gamma\phi$ combined with the bootstrap assumption to bound the derivatives of the distribution function $Z^{\beta}f$ in $L^1_{x,v}$. For this purpose, we need the following elementary estimate proved in \cite[Lemma 3.2]{Du22}. 

\begin{lemma}\label{lemma_uniform_integral_bound_kernel_convolution_duan}
For every $n\geq 2$, there exists a uniform constant $C_n>0$ depending only on $n$, such that for every $x\in \R^n$ we have
\begin{equation*}
    \int_{\R^n}\dfrac{dy}{|y|^{n-1}(1+|x+y|)^n} \leq C_n.
\end{equation*}
\end{lemma}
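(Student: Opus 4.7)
The plan is to reduce the estimate to a uniform bound by splitting the integration domain according to the relative sizes of $|y|$, $|x|$, and $|x+y|$, and on each piece absorbing one of the two singular weights into a constant while integrating the other. Setting $R := |x|$, I would first dispose of the compact regime $R \leq 2$: the integrand is bounded by $|y|^{-(n-1)}$ on any bounded ball (using $(1+|x+y|)^{-n} \leq 1$), which is integrable in dimension $n$, and for $|y|$ large, the triangle inequality gives $|x+y| \gtrsim |y|$, so the integrand decays like $|y|^{-(2n-1)}$, integrable at infinity for $n \geq 2$.

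The substantive case is $R \geq 2$, where I would partition $\R^n$ into three regions. On the inner ball $\{|y| \leq R/2\}$, one has $|x+y| \geq R/2$, so $(1+|x+y|)^{-n} \lesssim R^{-n}$, while $\int_{|y|\leq R/2}|y|^{-(n-1)}\,dy \lesssim R$ by polar coordinates, giving a contribution $\lesssim R^{1-n}$. On the far field $\{|y|\geq 2R\}$, the triangle inequality yields $|x+y| \geq |y|/2$, so the integrand is dominated by a constant multiple of $|y|^{1-2n}$, whose total mass on $\{|y|\geq 2R\}$ is again $\lesssim R^{1-n}$. On the middle annulus $\{R/2 \leq |y| \leq 2R\}$, I would use $|y|^{-(n-1)} \lesssim R^{1-n}$ and substitute $w = x+y$; the constraint $|y| \leq 2R$ forces $|w| \leq 3R$, so the contribution is bounded by $R^{1-n}\int_{|w|\leq 3R}(1+|w|)^{-n}\,dw \lesssim R^{1-n}\log R$.

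The main obstacle is precisely this middle annulus, where both singular weights contribute simultaneously and the profile $(1+|w|)^{-n}$ is only borderline integrable, producing the $\log R$ factor. The $R^{1-n}$ prefactor must then absorb this logarithm; for every $n \geq 2$ the product $R^{1-n}\log R$ remains bounded as $R\to\infty$, with the case $n=2$ being the tightest in that it leaves essentially no margin beyond the logarithmic gain. Summing the three contributions with the compact case yields a uniform constant depending only on $n$, as claimed.
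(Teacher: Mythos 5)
Your proof is correct, and all three regions for $R\geq 2$ are estimated properly. The middle annulus $\{R/2\leq|y|\leq 2R\}$ is indeed the tight piece: after the substitution $w=x+y$ the image annulus contains a ball of radius comparable to $R$ around the origin, so the $\log R$ loss from $\int_{|w|\leq 3R}(1+|w|)^{-n}\,dw$ is genuine, but it is absorbed by the prefactor $R^{1-n}$ for every $n\geq 2$. Note, however, that the paper does not contain its own proof of this lemma; it simply invokes \cite[Lemma 3.2]{Du22}, so there is no argument in the paper to compare against directly.

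It is worth recording that a shorter route avoids both the three-region case analysis and the logarithm. Split $\R^n$ into $\Omega_1=\{|y|\geq|x+y|\}$ and $\Omega_2=\{|y|<|x+y|\}$. On $\Omega_2$ one has $(1+|x+y|)^{-n}\leq(1+|y|)^{-n}$, so the integrand is dominated by $|y|^{-(n-1)}(1+|y|)^{-n}$. On $\Omega_1$ the change of variables $w=x+y$ gives $|y|=|w-x|\geq|w|$, hence $|y|^{-(n-1)}\leq|w|^{-(n-1)}$, and the integrand is dominated by $|w|^{-(n-1)}(1+|w|)^{-n}$. In both cases one lands on $\int_{\R^n}|z|^{-(n-1)}(1+|z|)^{-n}\,dz=\omega_{n-1}\int_0^\infty(1+r)^{-n}\,dr=\omega_{n-1}/(n-1)$, which is finite precisely when $n\geq 2$; this is the standard way to prove such convolution bounds and is presumably close to what Duan does. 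Your annular decomposition is heavier but still fully valid, and in fact tracks the dependence on $R=|x|$ more explicitly. One small remark: for $n=2$ the bound $R^{-1}\log R$ actually tends to zero, so describing that case as having ``essentially no margin'' overstates the tightness a bit.
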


As a consequence of Lemma \ref{lemma_uniform_integral_bound_kernel_convolution_duan}, we obtain decay in time for the integral term
\begin{equation}\label{remark_uniform_integral_bound_kernel_convolution_duan}
    \int_{\R^n}\dfrac{1}{|y|^{n-1}(e^t+|x-y|)^n} dy=\dfrac{1}{e^{(n-1)t}}\int_{\R^n} \dfrac{1}{|y'|^{n-1}(1+|y'-\frac{x}{e^t}|)^n}dy'\lesssim \dfrac{1}{e^{(n-1)t}},
\end{equation}
by using the change of variables $y=e^t y'$. We use the estimate (\ref{remark_uniform_integral_bound_kernel_convolution_duan}) to prove decay for the gradient $\nabla_x Z^\gamma\phi$. We improve the bootstrap assumption (\ref{bootstrap_assumption_energy_estimate}) using the following technical lemma to bound the non-linear terms $\|\nabla_xZ^\gamma(\phi)Z^\beta f\|_{L^1_{x,v}}$.

\begin{lemma}\label{lemma_decay_bilinear_terms_commuted_vlasov}
Under the bootstrap assumption (\ref{bootstrap_assumption_energy_estimate}), the corresponding solution $f$ to the Vlasov--Poisson system with the potential $\frac{-|x|^2}{2}$ satisfies 
\begin{equation}\label{estimate_bilinear_terms_commuted_vlasov_statement}
    \|\nabla_x Z^{\gamma}(\phi)   Z^{\beta}f\|_{L^1_{x,v}}\lesssim \dfrac{\epsilon^2}{e^{(n-1)t}}
\end{equation}
for every $t\in [0,T]$, and for any multi-indices $\beta$, $\gamma$ such that $|\beta|\leq N$, $|\gamma|\leq N$, and $|\beta|+|\gamma|\leq N+1$.
\end{lemma}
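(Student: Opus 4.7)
The plan is to estimate $\|\nabla_x Z^{\gamma}(\phi)\,Z^{\beta}f\|_{L^1_{x,v}}$ by first integrating out the velocity variable, writing
\[
\|\nabla_x Z^{\gamma}(\phi)\,Z^{\beta}f\|_{L^1_{x,v}} = \int_{\R^n}|\nabla_x Z^{\gamma}\phi(t,x)|\,\rho(|Z^{\beta}f|)(t,x)\,dx,
\]
and then splitting on whether the derivative count sits on the field or on the distribution function. Using Lemma \ref{lemma_commuted_poisson_equation} together with the Newtonian representation of the solution to the Poisson equation in dimension $n\geq 3$, one has the pointwise bound
\[
|\nabla_x Z^{\gamma}\phi(t,x)| \lesssim \sum_{|\beta'|\leq|\gamma|}\int_{\R^n}\frac{|Z^{\beta'}\rho(f)(t,y)|}{|x-y|^{n-1}}\,dy,
\]
and Lemma \ref{lemma_connection_microscopic_macroscopic_vector_fields} allows us to replace $Z^{\beta'}\rho(f)$ by $\rho(Z^{\beta'}f)$ modulo lower-order contributions of the same type.

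The central mechanism for the decay will be the change of variables $y=e^t z$ producing the factor $e^{-(n-1)t}$, combined with Lemma \ref{lemma_uniform_integral_bound_kernel_convolution_duan}, exactly as in the computation \eqref{remark_uniform_integral_bound_kernel_convolution_duan}. I split into two regimes using the constraint $N\geq 2n$:

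\textbf{Regime 1: $|\gamma|\leq N-n$.} Then $|\gamma|+n\leq N$, so I can apply the weighted Sobolev inequality of Proposition \ref{proposition_weighted_sobolev_linear_vlasov} to $Z^{\beta'}f$ for each $|\beta'|\leq|\gamma|$ appearing in Lemma \ref{lemma_commuted_poisson_equation}, together with the bootstrap assumption \eqref{bootstrap_assumption_energy_estimate}, to obtain
\[
|\rho(Z^{\beta'}f)(t,y)|\lesssim \frac{\e}{(e^t+|y|)^n}.
\]
Inserting this into the convolution representation and applying Lemma \ref{lemma_uniform_integral_bound_kernel_convolution_duan} via the change of variables $y=e^t z$ yields $\|\nabla_x Z^{\gamma}\phi\|_{L^\infty_x}\lesssim \e\,e^{-(n-1)t}$. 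Combined with the bootstrap bound $\|Z^{\beta}f\|_{L^1_{x,v}}\lesssim \e$, this gives \eqref{estimate_bilinear_terms_commuted_vlasov_statement}.

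\textbf{Regime 2: $|\gamma|>N-n$.} Then from $|\gamma|+|\beta|\leq N+1$ one gets $|\beta|\leq n$, whence $|\beta|+n\leq 2n\leq N$, so Proposition \ref{proposition_weighted_sobolev_absolute_values} applied to the absolute value $|Z^{\beta}f|$ gives
\[
\rho(|Z^{\beta}f|)(t,x)\lesssim \frac{\e}{(e^t+|x|)^n}.
\]
Using the convolution bound for $\nabla_x Z^{\gamma}\phi$ above, Fubini yields
\[
\int_{\R^n}\!\int_{\R^n}\frac{|Z^{\beta'}\rho(f)(t,y)|\,\rho(|Z^{\beta}f|)(t,x)}{|x-y|^{n-1}}\,dx\,dy
\lesssim \e\int_{\R^n}|Z^{\beta'}\rho(f)(t,y)|\!\int_{\R^n}\!\frac{dx}{|x-y|^{n-1}(e^t+|x|)^n}\,dy,
\]
and the inner integral is bounded by $e^{-(n-1)t}$ uniformly in $y$ by the same scaling plus Lemma \ref{lemma_uniform_integral_bound_kernel_convolution_duan}. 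Since $\|Z^{\beta'}\rho(f)(t)\|_{L^1_x}\lesssim \|Z^{\beta'}f(t)\|_{L^1_{x,v}}\lesssim \e$ by the bootstrap (again using Lemma \ref{lemma_connection_microscopic_macroscopic_vector_fields} to absorb lower-order terms), this closes the estimate.

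The main technical point is the bookkeeping of the regimes: one has to make sure that in each case the condition $N\geq 2n$ is enough to fit either $|\gamma|+n\leq N$ or $|\beta|+n\leq N$ under the joint constraint $|\beta|+|\gamma|\leq N+1$. Once this is arranged, the exponential decay is produced in both cases by the same scaling argument that converts the Newtonian kernel against a density with weight $(e^t+|\cdot|)^{-n}$ into a factor $e^{-(n-1)t}$ via Lemma \ref{lemma_uniform_integral_bound_kernel_convolution_duan}.
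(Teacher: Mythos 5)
Your proof is correct and follows essentially the same route as the paper's: the Green-function representation from Lemma \ref{lemma_commuted_poisson_equation}, the two-regime split on $|\gamma|$ versus $N-n$ using $N\geq 2n$, the weighted Sobolev inequality for the pointwise density bound, and Lemma \ref{lemma_uniform_integral_bound_kernel_convolution_duan} with the scaling $y=e^t z$ to produce the $e^{-(n-1)t}$ factor. The only cosmetic difference is that in Regime 1 you invoke Proposition \ref{proposition_weighted_sobolev_linear_vlasov} where the paper uses Proposition \ref{proposition_weighted_sobolev_absolute_values}; both yield the needed pointwise bound since $|\rho(g)|\leq\rho(|g|)$.
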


\begin{proof}
Combining the commuted Poisson equation in Lemma \ref{lemma_commuted_poisson_equation} with the relation between the macroscopic and microscopic vector fields established in Lemma \ref{lemma_connection_microscopic_macroscopic_vector_fields}, we obtain 
$$\Delta Z^{\gamma}\phi=\sum_{|\gamma'|\leq |\gamma|}C^{\gamma}_{\gamma'}\rho(Z^{\gamma'}f),$$ for some fixed coefficients $C^{\gamma}_{\gamma'}$. We use the Green function for the Poisson equation in $\R^n$ to write the solution of the commuted Poisson equation as $$Z^{\gamma}\phi(t,x)=\sum_{|\gamma'|\leq |\gamma|}\int_{\R^n} \dfrac{C_nC_{\gamma'}^{\gamma}}{|y|^{n-2}}\rho(Z^{\gamma'}f)(t,x-y)dy,$$ whose gradient can be estimated directly by 
\begin{equation}\label{nabla_phi}
|\nabla_x Z^{\gamma}\phi(t,x)|\lesssim \sum_{|\gamma'|\leq |\gamma|}\int_{\R^n} \dfrac{1}{|y|^{n-1}}\rho(|Z^{\gamma'}f|)(t,x-y)dy.\end{equation}
By the weighted Sobolev inequality for the absolute value of distribution functions in Proposition \ref{proposition_weighted_sobolev_absolute_values}, we estimate
\begin{align*}
    \rho(|Z^{\gamma'}f|)(t,x-y)&\lesssim \dfrac{1}{(e^t+|x-y|)^n}\sum_{|\beta''|\leq |\gamma'|+n}\|Z^{\beta''}f\|_{L^1_{x,v}}\\
    &\lesssim \dfrac{1}{(e^t+|x-y|)^n}\sum_{|\beta''|\leq N}\|Z^{\beta''}f\|_{L^1_{x,v}}\\
    &\lesssim \dfrac{\epsilon}{(e^t+|x-y|)^n},
\end{align*}
for every $|\gamma'|\leq N-n$. Hence, the solution of the commuted Poisson equation satisfies that for every $|\gamma|\leq N-n$, we have $$|\nabla_x Z^{\gamma}\phi(t,x)|\lesssim \e \sum_{|\gamma'|\leq |\gamma|}\int_{\R^n} \dfrac{1}{|y|^{n-1}(e^t+|x-y|)^n}dy\lesssim \frac{\e}{e^{(n-1)t}},$$ 
where we have used the estimate (\ref{remark_uniform_integral_bound_kernel_convolution_duan}) in the last inequality. As a result, the left hand side of (\ref{estimate_bilinear_terms_commuted_vlasov_statement}) can be bounded by
\begin{align*}
    \|\nabla_x Z^{\gamma}(\phi)   Z^{\beta}f\|_{L^1_{x,v}}&\lesssim \frac{\e}{e^{(n-1)t}}\|Z^\beta f\|_{L^1_{x,v}}\\
    &\lesssim \frac{\e}{e^{(n-1)t}}\sum_{|\beta|\le N}\|Z^\beta f\|_{L^1_{x,v}}\\
    &\lesssim \frac{\epsilon^2}{e^{(n-1)t}},
\end{align*}
for every $|\gamma|\leq N-n$. Otherwise, if $|\gamma|> N-n$ then $|\beta|\leq N-n$, since $|\beta|+|\gamma|\leq N+1$ and $N\geq 2n$. It follows from the bound $|\beta|\le N-n$ and Proposition \ref{proposition_weighted_sobolev_absolute_values} that
\begin{align}\label{last_1}
\rho(|Z^\beta f|)(t,x)\lesssim \frac{\e}{(e^t+|x|)^n}.
\end{align}
Therefore 
\begin{align*}
    \label{1}
    \|\nabla_x Z^\gamma (\phi) Z^\beta f\|_{L^1_{x,v}} &= \int|\nabla_x Z^\gamma \phi (t,x)|\rho(|Z^\beta f|)(t,x)dx \\
    & \lesssim \e \sum_{|\gamma'|\le |\gamma|}\iint \frac{1}{|y|^{n-1}} \rho(|Z^{\gamma'}f|)(t,x-y)\frac{1}{(e^t+|x|)^n} dx dy,\\
    &\lesssim \e \sum_{|\gamma'|\le |\gamma|}\iint \frac{1}{|y|^{n-1}} \rho(|Z^{\gamma'}f|)(t,z)\frac{1}{(e^t+|z+y|)^n} dz dy,\\
    &\lesssim \e \sum_{|\gamma'|\le |\gamma|} \int \rho(|Z^{\gamma'}f|)(t,z) \bigg(\int \frac{1}{|y|^{n-1}(e^t+|z+y|)^n} dy\bigg)dz\\
    &\lesssim \frac{\e}{e^{(n-1)t}}\sum_{|\gamma'|\le N}\|Z^{\gamma'} f\|_{L^1_{x,v}}\\
    &\lesssim\frac{\e^2}{e^{(n-1)t}},
\end{align*}
where we have used the change of variables $z=x-y$ and the previous estimates (\ref{remark_uniform_integral_bound_kernel_convolution_duan}), (\ref{nabla_phi}), and (\ref{last_1}).
\end{proof}

The quantitative decay estimate for the non-linear terms $\nabla_x Z^{\gamma}(\phi) Z^{\beta}f$ given by Lemma \ref{lemma_decay_bilinear_terms_commuted_vlasov} shows that the $L^1_{x,v}$ norm of the non-linear contribution in the energy estimate for the commuted distribution function $Z^{\alpha}f$ satisfies that for every $t\in [0,T]$ we have
\begin{align*}
\| \T_{\phi}(Z^{\alpha}f)\|_{L^1_{x,v}}&\lesssim e^t \Big(\sum_{1\leq |\beta|\leq |\alpha|,}\sum_{|\gamma|+|\beta|\leq |\alpha|+1}\|\nabla_x Z^{\gamma}(\phi)   Z^{\beta}f\|_{L^1_{x,v}}\Big)\\
&\lesssim \dfrac{\epsilon^2}{e^{(n-2)t}}, 
\end{align*}
by using the bound (\ref{estimate_Vlasov_for_commuted_distribution_ growing_factor}) previously obtained. Therefore, the energy $\mathcal{E}_N[f]$ of the solution to the Vlasov--Poisson system with the potential $\frac{-|x|^2}{2}$ is bounded for every $t\in [0,T]$ by
\begin{equation}\label{estimate_energy_improve_bootstrap_assumption}
    \mathcal{E}_N[f(t)]\leq \mathcal{E}_N[f(0)]+C\epsilon^2\int_0^t \dfrac{ds}{e^{(n-2)s}},
\end{equation}
where $C>0$ is a uniform constant depending only on $n$ and $N$. We emphasize that the time integral in the right hand side of (\ref{estimate_energy_improve_bootstrap_assumption}) is uniformly bounded for any $t\geq 0$, due to the exponential decay in time of $\exp(-(n-2)t)$ in dimension $n\geq 3$. As a result, the bootstrap assumption (\ref{bootstrap_assumption_energy_estimate}) is improved provided $\epsilon>0$ is sufficiently small so that $$\mathcal{E}_N[f(t)]\leq \frac{3}{2}\epsilon,$$ where we have used the smallness assumption $\mathcal{E}_N[f(0)]\leq \epsilon$ on the initial distribution function. This concludes the proof of the global energy estimate (i). Finally, note that the decay estimates in space and time for the induced spatial density in (ii) follow from applying the global energy estimate (i) combined with Proposition \ref{proposition_weighted_sobolev_absolute_values}, and Proposition \ref{proposition_improved_decay_spatial_density}.

\section{The two-dimensional case}\label{section_proof_theorem_dimension_two}

In this section, we study the evolution in time of sufficiently regular small data solutions $f$ for the Vlasov--Poisson system with the trapping potential $\frac{-|x|^2}{2}$ in dimension two. 

\subsection{The modified vector fields}\label{subsection_modified_vector_fields}

We recall the class $\lambda$ of commuting vector fields given by $$\lambda=\Big\{ U_i, S_i, L, R_{ij}  \Big\},$$ where $i,j\in \{1,2\}$. Let $(Z^i)_i$ be an arbitrary ordering of the microscopic vector fields in $\lambda$. For each vector field $Z^i\in \lambda$, we compute $$[\T_{\phi},Z^i]=\mu \sum_{k=1}^2 \partial_{x^k}(Z^i\phi+c_i\phi)\partial_{v^k},$$ where $c_i=-2$ if $Z^i=L$, otherwise, $c_i=0$. This commutator can be written in terms of the vector fields in $\lambda$ by using the identity 
\begin{equation}\label{decomposition_velocity_vector_into_stables}
\partial_{v^k}=\dfrac{1}{2e^t}e^t(\partial_{x^k}+\partial_{v^k})-\dfrac{e^t}{2}e^{-t}(\partial_{x^k}-\partial_{v^k}).
\end{equation}
Using this decomposition, we gain an exponentially growing factor, which does not allow to close the energy estimate. We avoid this problem by considering a modified set of vector fields of the form $$Y^i=Z^i-\sum_{k=1}^2 \vphi_k^i(t,x,v)S_k,$$ where $\varphi_k^i$ are sufficiently regular functions that vanish at $t=0$. For every modified vector field $Y^i$, we have $$[\T_{\phi},Y^i]=\mu\sum_{k=1}^2\partial_{x^k}(Z^i\phi+c_{i}\phi)\partial_{v^k}-\sum_{k=1}^2\T_{\phi}(\vphi^i_k)S_k-\mu\sum_{k,j=1}^2\vphi^i_k\partial_{x^j} S_k\phi\partial_{v^j}.$$ Using the decomposition \eqref{decomposition_velocity_vector_into_stables}, we have
\begin{align*}
[\T_{\phi},Y^i]&=\dfrac{\mu}{2e^t}\sum_{k=1}^2\partial_{x^k}(Z^i\phi+c_{i}\phi)U_k -\dfrac{e^t\mu}{2}\sum_{k=1}^2\partial_{x^k}(Z^i\phi+c_{i}\phi)S_k -\sum_{k=1}^2\T_{\phi}(\vphi^i_k)S_k\\
&\quad-\frac{\mu}{2e^t}\sum_{k,j=1}^2\vphi^i_k\partial_{x^j} S_k\phi U_j +\frac{e^t\mu}{2}\sum_{k,j=1}^2\vphi^i_k\partial_{x^j} S_k\phi S_j.
\end{align*}
We remove the slower decaying terms by setting $\T_{\phi}(\vphi_k^i)=-\frac{\mu}{2} e^t\partial_{x^k}(Z^i\phi+c_i\phi)$.

\begin{definition}
Let $\{Z^i\}_i$ be an ordering of $\lambda$. The modified vector fields $Y^i$ are defined as $$Y^i:=Z^i-\sum_{k=1}^n \vphi^i_k(t,x,v) S_k,\qquad S_k:=e^{-t}(\partial_{x^k}-\partial_{v^k}),$$ where 
\begin{enumerate}
\item $\vphi_k^i\equiv 0$ if $Z^i$ is a stable vector field, i.e. $Z^i=S_k$.
\item If $Z^i$ is not a stable vector field, then, $\vphi_k^i(t,x,v)$ is determined by $$\T_{\phi}(\vphi_k^i)=-\dfrac{\mu}{2} e^t\partial_{x^k}(Z^i\phi+c_i\phi),\qquad \vphi_k^i(0,x,v)=0,$$ where $c_i=-2$ if $Z^i=L$, otherwise, we set $c_i=0$.
\end{enumerate}
The set of modified vector fields is denoted by $\lambda_m$.
\end{definition}

Throughout the paper, we denote by $Y$ a generic modified vector field in $\lambda_m$. We use a multi-index notation for the microscopic differential operators of order $|\alpha|$ given by the composition $$Y^{\alpha}=Y^{\alpha_1}Y^{\alpha_2}\dots Y^{\alpha_n},$$ for every multi-index $\alpha$. We denote by $\lambda_m^{|\alpha|}$ the family of microscopic differential operators obtained as a composition of $|\alpha|$ vector fields in $\lambda_m$. We denote by $\M$ the set of all functions $\{\vphi_k^i\}$. We also denote by $\vphi$ a generic function in $\M$. 

\begin{definition}
We say that $P(\vphi)$ is a multilinear form of degree $d$ and signature less than $k$ if $P(\vphi)$ is of the form $$P(\vphi)=\sum\limits_{\substack{ |\alpha_1|+\dots+ |\alpha_d|\leq k\\ (\vphi_1, \dots, \vphi_d)\in\M^d }}C_{\bar{\alpha},\bar{\vphi}}\prod\limits_{\substack{j=1,\dots, d }}Y^{\alpha_j}(\vphi_j),$$ where $\alpha_j$ are multi-indices, and $C_{\bar{\alpha}, \bar{\vphi}}$ are uniform constants depending on $\bar{\alpha}=(\alpha_1,\dots, \alpha_d)$ and $\bar{\vphi}=(\vphi_1,\dots, \vphi_d)$. 
\end{definition}

\subsection{Properties of the modified vector fields}

In this subsection, we study the main properties of the modified vector fields that will be used later in the main proofs.

\begin{lemma}\label{lemma_commutation_formula_modified}
For any multi-index $\alpha$, we have 
\begin{equation}\label{identity_commutation_formula_modified}
[\T_{\phi},Y^{\alpha}]=\sum_{d=0}^{|\alpha|+1} \sum_{i=1}^2 \sum_{|\beta|, |\gamma|\leq |\alpha|} P^{\alpha i}_{d\gamma\beta}(\vphi)\partial_{x^i}Z^{\gamma}(\phi)Y^{\beta},
\end{equation}
where $P^{\alpha i}_{d\gamma\beta}(\vphi)$ are multilinear forms of degree $d$ and signature less than $k$ such that $k\leq |\alpha|-1$ and $k+|\gamma|+|\beta|\leq |\alpha|+1$.
\end{lemma}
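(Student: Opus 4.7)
The plan is to prove the identity by induction on $|\alpha|$. For the base case $|\alpha|=1$, I would unfold the definition $Y^i = Z^i - \sum_k \vphi_k^i S_k$ into
$$[\T_\phi, Y^i] = [\T_\phi, Z^i] - \sum_k \T_\phi(\vphi_k^i)\, S_k - \sum_k \vphi_k^i\, [\T_\phi, S_k],$$
combine the known commutator $[\T_\phi, Z^i] = \mu\sum_k \partial_{x^k}(Z^i\phi+c_i\phi)\partial_{v^k}$ from Lemma \ref{lemma_commuted_nonliner_Vlasov} with the decomposition $\partial_{v^k} = \tfrac{1}{2e^t}U_k - \tfrac{e^t}{2}S_k$, and then observe that the $S_k$ term carrying the exponentially growing factor $e^t$ is cancelled exactly by the defining equation $\T_\phi(\vphi_k^i) = -\tfrac{\mu}{2}e^t\,\partial_{x^k}(Z^i\phi+c_i\phi)$. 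The surviving contributions are rewritten using $U_k = Y^{U_k} + \sum_l \vphi_l^{U_k} Y^{S_l}$ and $S_k = Y^{S_k}$, which places everything in the stated form with $d\le 2$, $k=0$, and $|\gamma|,|\beta|\le 1$, as required at order $|\alpha|=1$.

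For the inductive step I would split
$$[\T_\phi, Y^i Y^\alpha] = [\T_\phi, Y^i]\,Y^\alpha + Y^i\,[\T_\phi, Y^\alpha].$$
The first summand is handled by post-composing each triple from the base case with $Y^\alpha$, noting that $Y^\beta Y^\alpha$ is itself a $Y^{\widetilde\beta}$ with $|\widetilde\beta|\le |\alpha|+1$, while $d$, $k$, and $|\gamma|$ remain unchanged. For the second summand I apply $Y^i$ as a derivation to each triple $P(\vphi)\,\partial_{x^j}Z^\gamma(\phi)\,Y^\beta$ produced by the inductive hypothesis, obtaining three families of terms: (a) $Y^i(P)\,\partial_{x^j}Z^\gamma(\phi)\,Y^\beta$, where distributing $Y^i$ over a factor $Y^{\alpha_j}(\vphi_j)$ in $P$ produces $Y^{i+\alpha_j}(\vphi_j)$, preserving the degree and raising the signature by one; (b) $P\cdot Y^i(\partial_{x^j}Z^\gamma(\phi))\cdot Y^\beta$, where, since $\phi$ depends only on $(t,x)$, the microscopic pieces of $Y^i = Z^i - \sum_k \vphi_k^i S_k$ collapse to their macroscopic analogues, and commuting them past $\partial_{x^j}$ via Lemma \ref{lemma_commuting_in_lambda} yields $\partial_{x^{j'}} Z^{\gamma'}(\phi)$ with $|\gamma'|\le |\gamma|+1$, possibly multiplied by one extra $\vphi_k^i$ (raising $d$ by one without affecting the signature); and (c) $P\cdot \partial_{x^j}Z^\gamma(\phi)\cdot Y^i Y^\beta$, which raises $|\beta|$ by one. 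An elementary count shows that in each case $k'\le |\alpha|$, $|\gamma'|,|\beta'|\le |\alpha|+1$, and $k'+|\gamma'|+|\beta'|\le |\alpha|+2$, which matches the lemma at order $|\alpha|+1$.

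The main obstacle I anticipate is the combinatorial bookkeeping in case (b): the commutators $[Z^i,\partial_{x^j}]$ are not zero when $Z^i\in\{L, R_{ij}\}$, so they produce additional $\partial_{x^{j'}}Z^{\gamma'}(\phi)$ terms with $|\gamma'|\le |\gamma|$ that must be folded back into the master formula, while one must simultaneously verify that the signature budget $k\le |\alpha|-1$ is never exceeded across iterated applications of Leibniz. Once the base case is closed with the correct signs, however, all exponential factors $e^{\pm t}$ have been absorbed either into the macroscopic $U^x, S^x$ appearing inside $\partial_{x^i}Z^\gamma(\phi)$ or into the modified vector fields themselves, so the induction step is essentially combinatorial and no further cancellation is needed.
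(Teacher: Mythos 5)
Your proposal is correct and follows essentially the same route as the paper's own proof: the base case is closed by unfolding $Y^i$, exploiting the cancellation built into the defining equation for $\T_\phi(\vphi_k^i)$, and rewriting the surviving $U_k$ in terms of modified vector fields; the inductive step is the same Leibniz split of $Y^i[\T_\phi,Y^\alpha]$ into the three families you call (a), (b), (c), with identical bookkeeping of degree, signature, and the indices $|\gamma|,|\beta|$. You are in fact slightly more careful than the paper at the end of the base case, writing $U_k = Y^{U_k} + \sum_l \vphi_l^{U_k} Y^{S_l}$ with the $\vphi$-coefficients included (the paper's phrasing ``$Z^k = Y^k + \sum_{l=1}^k S_l$'' drops them), and in spelling out that the macroscopic collapse in case (b) relies on $\phi$ depending only on $(t,x)$.
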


\begin{proof}
As a first step, we show the commutation formula when $|\alpha|=1$. After using the equation satisfied by the coefficients $\vphi_k^i$, we have 
\begin{align*}
[\T_{\phi},Y^i]&=\dfrac{\mu}{2e^t}\sum_{k=1}^2\partial_{x^k}(Z^i\phi+c_{i}\phi)U_k -\frac{\mu}{2e^t}\sum_{k,j=1}^2\vphi^i_k\partial_{x^j} S_k\phi U_j +\frac{e^t\mu}{2}\sum_{k,j=1}^2\vphi^i_k\partial_{x^j} S_k\phi S_j.
\end{align*}
The desired identity is obtained by rewriting the unstable vector fields as $Z^k=Y^k+\sum_{l=1}^kS_l$.

The general case is proven by induction. Assume that the commutation formula holds for some multi-index $\alpha$ and let $Y\in\lambda$ be an arbitrary modified vector field. By the identity $$[\T_{\phi},YY^{\alpha}]=[\T_{\phi},Y]Y^{\alpha}+Y[\T_{\phi},Y^{\alpha}],$$ it is enough to show that the terms in the right-hand side have the correct form. The first term $[\T_{\phi},Y]Y^{\alpha}$ is treated using the case when $|\alpha|=1$. The second term $Y[\T_{\phi},Y^{\alpha}]$ generates three different types of terms. The terms $$Y(P^{\alpha i}_{d\gamma\beta}(\vphi)) \partial_{x^i}Z^{\gamma}(\phi)Y^{\beta}$$ have the correct form, since the multilinear forms have the same degree and their signature increases by one. The terms $$P^{\alpha i}_{d\gamma\beta}(\vphi) Y(\partial_{x^i}Z^{\gamma}(\phi))Y^{\beta}$$ also have the correct form, since $Y$ is schematically of the form $Z-\vphi S$, so $$P^{\alpha i}_{d\gamma\beta}(\vphi) Y(\partial_{x^i}Z^{\gamma}(\phi))Y^{\beta}=\sum_{|\gamma'|\leq |\gamma|+1}P'^{\alpha i}_{d\gamma'\beta}(\vphi) \partial_{x^i}Z^{\gamma'}(\phi)Y^{\beta},$$ where $P'^{\alpha i}_{d\gamma'\beta}$ are multilinear forms of degree at most $d+1$ with the same signature. Finally, the last terms are of the form $P^{\alpha i}_{d\gamma\beta}(\vphi) \partial_{x^i}Z^{\gamma}(\phi)YY^{\beta}$ which also satisfy the required properties. 
\end{proof}

\begin{lemma}\label{lemma_regular_vf_into_modif_vf}
For any multi-index $\alpha$, we have $$Z^{\alpha}=\sum_{d=0}^{|\alpha|} \sum_{|\beta|\leq |\alpha|} P^{\alpha}_{d\beta}(\vphi)Y^\beta,$$ where $P^{\alpha}_{d\beta}(\vphi)$ are multilinear forms of degree $d$ and signature less than $k$ with $k\leq |\alpha|-1$ and $k+|\beta|\leq |\alpha|.$
\end{lemma}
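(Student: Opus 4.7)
The plan is to prove the identity by induction on $|\alpha|$. The base case $|\alpha|=1$ is immediate from the definition of the modified vector fields: inverting $Y^i = Z^i - \sum_k \vphi_k^i S_k$ gives $Z^i = Y^i + \sum_k \vphi_k^i S_k$, and since $S_k$ is itself a stable vector field for which $\vphi \equiv 0$, the stable modified vector field coincides with $S_k$, so $S_k = Y^{k}$ for the appropriate index. Therefore $Z^i = Y^i + \sum_k \vphi_k^i Y^k$, which has the required form: a degree-$0$ signature-$0$ term $P^{i}_{0,i} = 1$ paired with $Y^i$, and degree-$1$ signature-$0$ terms $\vphi_k^i$ paired with $Y^k$. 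Both satisfy $k \leq 0 = |\alpha|-1$ and $k + |\beta| \leq 1 = |\alpha|$.

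For the inductive step, I would write $Z^{i}Z^{\alpha}$, expand $Z^{\alpha}$ via the inductive hypothesis, and then apply $Z^i = Y^i + \sum_k \vphi_k^i Y^k$ to each summand $P^{\alpha}_{d\beta}(\vphi) Y^{\beta}$. Leibniz produces two types of terms: a term $P^{\alpha}_{d\beta}(\vphi)\cdot Z^i Y^{\beta}$ and a term $Z^i\bigl(P^{\alpha}_{d\beta}(\vphi)\bigr)\cdot Y^{\beta}$. The first type is handled by substituting $Z^i = Y^i + \sum_k \vphi_k^i Y^k$ directly, which yields either $P^{\alpha}_{d\beta}(\vphi) Y^{i}Y^{\beta}$ (multi-index of length $|\beta|+1$, degree and signature unchanged) or $\vphi_k^i\, P^{\alpha}_{d\beta}(\vphi) Y^k Y^{\beta}$ (multi-index of length $|\beta|+1$, degree increases by one, signature unchanged). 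For the second type, one applies $Y^i$ or $\vphi_k^i Y^k$ to the multilinear form $P^{\alpha}_{d\beta}(\vphi)$, producing a new multilinear form whose degree is unchanged (for $Y^i P$) or raised by one (for $\vphi_k^i Y^k P$), and whose signature increases by exactly one in both cases, since one factor $Y^{\alpha_j}(\vphi_j)$ gets replaced by $Y^i Y^{\alpha_j}(\vphi_j)$ or $Y^k Y^{\alpha_j}(\vphi_j)$.

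The main bookkeeping to check is that after these manipulations the inequalities $k \leq |\alpha|$ (which is the new $|\alpha|-1$ bound) and $k + |\beta| \leq |\alpha|+1$ are preserved. In the first type of term the signature $k$ stays the same while $|\beta|$ increases by one, and by induction $k + |\beta| \leq |\alpha|$, giving $k + (|\beta|+1) \leq |\alpha|+1$. In the second type $k$ increases by one and $|\beta|$ stays the same, so again the sum grows by one and the bound holds. The inequality $k \leq |\alpha|$ follows from $k \leq |\alpha|-1$ by induction, with at most one extra unit from the signature jump.

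The only non-routine point is recognising that applying a modified vector field to a multilinear form in $\vphi$ preserves the multilinear structure and has the advertised effect on degree and signature — essentially, $Y$ acts as a derivation and, being of the schematic form $Z - \vphi S$, can only either extend the derivative multi-index on one $\vphi_j$ factor (raising signature by one) or introduce an extra $\vphi$ factor (raising degree by one). Once this book-keeping rule is stated once, the inductive step is mechanical. I would record this as a short lemma or an explicit remark before the induction, so that the induction itself reads cleanly.
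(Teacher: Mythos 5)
Your proof is correct and follows essentially the same route as the paper: invert the defining relation to get the base case $Z^i = Y^i + \sum_k \vphi_k^i S_k$ (noting that the stable fields $S_k$ are themselves modified vector fields), then induct on $|\alpha|$ via the Leibniz rule, tracking how degree and signature change. You have merely filled in the bookkeeping that the paper compresses into ``an inductive argument shows the general case.''

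One small phrasing slip in your closing remark: applying a modified vector field $Y$ alone to a multilinear form only raises the signature (it is a derivation); the extra $\vphi$ factor and the corresponding degree increase come from the $\vphi^i_k S_k$ part of the substitution $Z^i = Y^i + \sum_k \vphi^i_k S_k$, not from $Y$ itself. Your detailed case analysis earlier is stated correctly; it is only the summary sentence that conflates the two mechanisms.
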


\begin{proof}
The lemma holds for $|\alpha|=1$, since $Z^i=Y^i+\sum_{k=1}^2 \vphi_k^i S_k$ where $S_k$ is also a modified vector field. An inductive argument shows the general case.
\end{proof}

\begin{lemma}\label{lemma_spatial_density_non_modified_vs_into_modified}
For every multi-index $\alpha$, we have $$\rho(Z^{\alpha}f)=\sum_{d=0}^{|\alpha|}\sum_{|\beta|\leq |\alpha|}\rho(Q_{d\beta}^{\alpha}(\partial_x\vphi)Y^{\beta}f)+\sum_{j=1}^{|\alpha|}\sum_{d=1}^{|\alpha|+1}\sum_{|\beta|\leq |\alpha|}\dfrac{1}{e^{2jt}}\rho(P_{d\beta}^{\alpha j}(\vphi)Y^{\beta}f),$$ where $Q_{d\beta}^{\alpha}(\partial_x\vphi)$ are multilinear forms with respect to $\partial_x\vphi$ of degree $d$ and signature less than $k'$ such that $k'\leq |\alpha|-1$ and $k'+d+|\beta|\leq |\alpha|$, and  $P_{d\beta}^{\alpha j}(\vphi)$ are multilinear forms of degree $d$ and signature less than $k$ such that $k\leq |\alpha|$ and $k+|\beta|\leq |\alpha|.$
\end{lemma}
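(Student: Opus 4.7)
The plan is to argue by induction on $|\alpha|$. The base case $|\alpha|=0$ is immediate since $\rho(f)=\rho(Q^0_{00}\,Y^0 f)$ with $Q^0_{00}\equiv 1$, a degree-zero multilinear form.

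For the inductive step, I would first apply Lemma~\ref{lemma_regular_vf_into_modif_vf} to write $Z^\alpha f=\sum_{d,\beta} P^\alpha_{d\beta}(\vphi)\,Y^\beta f$, so that the problem reduces to rewriting each $\rho(P(\vphi)\,Y^\beta f)$ in the claimed form. The central tool is the pointwise vector field identity $S_k=e^{-2t}U_k-2e^{-t}\partial_{v^k}$, which follows immediately from $U_k=e^t(\partial_{x^k}+\partial_{v^k})$ and $S_k=e^{-t}(\partial_{x^k}-\partial_{v^k})$. Combined with one integration by parts in $v$, the substitution $\partial_{v^k}\vphi=\tfrac{1}{2}(e^{-t}U_k\vphi-e^t S_k\vphi)$, and a further integration by parts inside the resulting $\rho((S_k\vphi)g)$ term (after which $\rho(\vphi\,S_k g)$ reappears on the right-hand side and one can solve for it), I expect to derive the key identity
\[
\rho(\vphi\,S_k g)=\frac{1}{e^{2t}}\rho(\vphi\,U_k g)+\frac{2}{e^{2t}}\rho((U_k\vphi)\,g)-2e^{-t}\rho((\partial_{x^k}\vphi)\,g).
\]
After the further decomposition $U_k=Y^{U_k}+\sum_l\vphi_l^{U_k}Y^{S_l}$ applied both to $U_k g$ and to $U_k\vphi$, the first two terms on the right-hand side contribute to the $P$-type sum with $j=1$ (since they carry the factor $\tfrac{1}{e^{2t}}$), while the third is a $Q$-type term with $d=1$, signature $k'=0$, and bounded time-dependent coefficient $-2e^{-t}$.

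To handle higher-order multi-indices, I would iterate this identity by peeling off stable factors $S_k$ appearing in $Y^\beta f$ one at a time, using the expansion/commutation structure encoded in Lemma~\ref{lemma_regular_vf_into_modif_vf} when an $S_k$ is not already adjacent to $f$. Each iteration either adds a $\partial_x\vphi$ factor (raising the signature $k'$ of a $Q$-type term) or contributes one extra factor $\tfrac{1}{e^{2t}}$ (raising $j$) together with at most one additional factor of $\vphi$ (raising $d$), while $|\beta|$ strictly decreases. The iteration therefore terminates after at most $|\alpha|$ steps, and the resulting bounds $k'+d+|\beta|\le|\alpha|$ for $Q$-terms and $k+|\beta|\le|\alpha|$, $d\le|\alpha|+1$, $1\le j\le|\alpha|$ for $P$-terms match the advertised constraints.

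The main obstacle I foresee is the bookkeeping of signature, degree, and $|\beta|$ through the recursion. In particular, one must ensure that no positive exponential factor $e^{+t}$ or $e^{+2t}$ ever appears and that $d$ is never pushed beyond $|\alpha|+1$. This requires careful accounting of the commutator terms produced when moving an $S_k$ past other modified vector fields inside $Y^\beta$, and verifying that each application of the key identity strictly decreases $|\beta|$ while raising $j$ and $d$ by at most one each. Both checks ultimately rest on the structural bounds of Lemma~\ref{lemma_regular_vf_into_modif_vf} and on the elementary commutation relations among $U_k$, $S_k$, $L$, and $R_{ij}$.
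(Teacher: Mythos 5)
Your base-case computation and the key identity
\[
\rho(\vphi\,S_k g)=\frac{1}{e^{2t}}\rho(\vphi\,U_k g)+\frac{2}{e^{2t}}\rho((U_k\vphi)\,g)-2e^{-t}\rho((\partial_{x^k}\vphi)\,g)
\]
are correct, and they capture the same algebra the paper uses: the relation $S_k=e^{-2t}U_k-2e^{-t}\partial_{v^k}$, integration by parts in $v$, and the further decomposition $U_k=Y^{U_k}+\sum_l\vphi_l^{U_k}S_l$. So the analytic core of your proposal matches the paper's.

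Where the proposal diverges is in the organization of the inductive step, and this is where a genuine gap appears. You propose first invoking Lemma~\ref{lemma_regular_vf_into_modif_vf} to expand $Z^{\alpha}f=\sum P^{\alpha}_{d\beta}(\vphi)Y^{\beta}f$ and then post-processing the resulting $\rho(P(\vphi)Y^{\beta}f)$ terms. The difficulty is that Lemma~\ref{lemma_regular_vf_into_modif_vf} produces coefficients $P(\vphi)$ that may contain several \emph{bare} factors $\vphi$ which are no longer located immediately next to a stable vector field $S_k$ in the remaining composition $Y^{\beta}$; in fact $Y^{\beta}$ may not begin with an $S_k$ at all. Your key identity requires exactly the configuration $\rho(\vphi\,S_k g)$ (the $\vphi$ outermost and $S_k$ directly after it), so applying it forces you to commute an $S_k$ into that position. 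Your sketch speaks of ``peeling off stable factors $S_k$ appearing in $Y^{\beta}f$'' and of $S_k$ ``adjacent to $f$'', which is the wrong adjacency --- the $S_k$ needs to sit at the \emph{outer} slot next to the coefficient, not next to $f$. Trying instead to extract a $\partial_v$ from an unstable factor $U_k=e^t(\partial_{x^k}+\partial_{v^k})$ to integrate by parts against a bare $\vphi$ produces an uncompensated $e^t$, which cannot be absorbed. In addition, the constraints $k'+d+|\beta|\le|\alpha|$, $d\le|\alpha|+1$, $j\le|\alpha|$ are stated but not tracked through the recursion you propose, and the commutator terms generated by moving $S_k$ past other modified vector fields would have to be re-accounted for.

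The paper sidesteps all of this by inducting directly on $|\alpha|$: one applies a single macroscopic $Z$ to $\rho(Z^{\alpha}g)$, uses the induction hypothesis to write $\rho(Z^{\alpha}g)$ in the already-sorted $Q$/$P$ form, and then splits the new $Z=Y+\vphi S$ at the \emph{outermost} position. This guarantees the new $\vphi$ always sits next to its $S$, so the integration-by-parts identity applies immediately and the $Q$/$P$ dichotomy propagates cleanly term by term. If you want to make your route rigorous you would have to supply the missing commutation analysis and verify the index bounds at each peeling step; adopting the paper's ``one $Z$ at a time from the outside'' organization is the simpler way to close the induction.
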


\begin{proof}
Given $Z\in \lambda$, the modified vector field corresponding to $Z$ is denoted by $Y\in \lambda_m$. We will use the schematic notations $Y=Z-\vphi S$ and $Z=Y+\vphi S$ instead of the lengthy formulae given before. We will also use the notation $e^t\partial_x+e^t\partial_v-\vphi S$ to denote a generic modified vector field. We denote a generic modified vector field by $Y'$, and a generic coefficient by the letter $\vphi'\in\M$.

Firstly, we prove the lemma in the case when $|\alpha|=1$. Given $Z\in\lambda$, we have
\begin{align*}
\int Z(g)dv&=\int (Z-\vphi S+\vphi S)(g) dv\\
&=\int Y(g) dv+\int \vphi S(g) dv\\
&=\int Y(g) dv+\int \dfrac{\vphi}{e^{2t}} (e^t(\partial_xg+\partial_vg)-\vphi S(g)+\vphi S(g)-2e^t\partial_vg) dv\\
&=\int Y(g) dv+\int \dfrac{\vphi}{e^{2t}} (Y'(g)+\vphi S(g)) dv-2\int \dfrac{\vphi}{e^{t}} \partial_vg dv.
\end{align*}
The first and second terms of the right-hand side of the last line have the correct form. For the last term, we integrate by parts in the velocity variable 
\begin{align*}
-\int \dfrac{\vphi}{e^{t}} \partial_vg dv&=\dfrac{1}{e^{t}} \int  \partial_v\vphi g dv\\
&=\dfrac{1}{e^{2t}} \int  (e^t\partial_x\vphi+e^t\partial_v\vphi-\vphi'S\vphi+\vphi'S\vphi-e^t\partial_x\vphi) g dv\\
&=\dfrac{1}{e^{2t}} \int  (Y'\vphi+\vphi'S\vphi) g dv-\dfrac{1}{e^{t}} \int \partial_x(\vphi) g dv,
\end{align*}
where now all terms are of the correct form. This proves the lemma when $|\alpha|=1$. We now assume that the lemma holds true for some $\alpha$. Let $Z$ be a non-modified vector field. Using that $Z\rho(Z^{\alpha}g)=\rho(ZZ^{\alpha}g)+c_Z\rho(Z^{\alpha}g)$, we only need to show that $Z\rho(Z^{\alpha}g)$ has the correct form. Using the induction hypothesis and writing $Y=Z-\vphi S$ to denote the associated modified vector field, we have 
\begin{align*}
Z\rho(Z^{\alpha}g)&=\sum_{d=0}^{|\alpha|}\sum_{|\beta|\leq |\alpha|}\rho\Big(Z\Big[Q_{d\beta}^{\alpha}(\partial_x\vphi)Y^{\beta}f\Big]\Big)\\
&\qquad \qquad +\sum_{j=1}^{|\alpha|}\sum_{d=1}^{|\alpha|+1}\sum_{|\beta|\leq |\alpha|}\dfrac{1}{e^{2jt}}\rho\Big(Z\Big[P_{d\beta}^{\alpha j}(\vphi)Y^{\beta}f\Big]\Big)+c_Z\rho(Z^{\alpha}(g)).
\end{align*}
The last term has already the right form. Writing $Z=Y+\vphi S$ in the term $\rho\Big(Z\Big[P_{d\beta}^{\alpha j}(\vphi)Y^{\beta}f\Big]\Big)$, one easily see that they also have the desired form. For the missing term, we write
\begin{align*}
\rho\Big(Z\Big[Q_{d\beta}^{\alpha}(\partial_x\vphi)Y^{\beta}f\Big]\Big)&=\rho\Big((Y+\vphi S)\Big[Q_{d\beta}^{\alpha}(\partial_x\vphi)Y^{\beta}f\Big]\Big)\\
&=\rho\Big(Y\Big[Q_{d\beta}^{\alpha}(\partial_x\vphi)Y^{\beta}f\Big]\Big)+\rho\Big(\vphi S\Big[Q_{d\beta}^{\alpha}(\partial_x\vphi)Y^{\beta}f\Big]\Big).
\end{align*}
The first term of the right-hand side has the correct form. For the second term, we write $$\vphi S=\frac{\vphi}{e^{2t}}(e^t\partial_x+e^t\partial_v-\vphi'S+\vphi'S-2e^t\partial_v)=\frac{\vphi}{e^{2t}}Y'+\frac{\vphi\vphi'}{e^{2t}}S-2\frac{\vphi}{e^{t}}\partial_v,$$ so that $$\rho\Big(\vphi S\Big[Q_{d\beta}^{\alpha}(\partial_x\vphi)Y^{\beta}f\Big]\Big)=\dfrac{1}{e^{2t}}\rho\Big((\vphi Y'+\vphi\vphi' S)\Big[Q_{d\beta}^{\alpha}(\partial_x\vphi)Y^{\beta}f\Big]\Big)+\dfrac{2}{e^{t}}\rho\Big(\partial_v\vphi\Big[Q_{d\beta}^{\alpha}(\partial_x\vphi)Y^{\beta}f\Big]\Big),$$ where we have integrated by parts the last term. The first term on the right-hand side has the correct form. For the second term, we again write $\partial_v\vphi=\frac{1}{e^t}(e^t\partial_x+e^t\partial_v-\vphi'S+\vphi'S-e^t\partial_x)\vphi$, so that 
\begin{align*}
\dfrac{2}{e^{t}}\rho\Big(\partial_v\vphi\Big[Q_{d\beta}^{\alpha}(\partial_x\vphi)Y^{\beta}f\Big]\Big)&=\dfrac{2}{e^{2t}}\rho\Big(Y'(\vphi)\Big[Q_{d\beta}^{\alpha}(\partial_x\vphi)Y^{\beta}f\Big]\Big)+\dfrac{2}{e^{2t}}\rho\Big(\vphi'S(\vphi)\Big[Q_{d\beta}^{\alpha}(\partial_x\vphi)Y^{\beta}f\Big]\Big)\\
&\qquad -\dfrac{2}{e^{t}}\rho\Big(\partial_x\vphi\Big[Q_{d\beta}^{\alpha}(\partial_x\vphi)Y^{\beta}f\Big]\Big),
\end{align*}
where all terms now have the correct form. 
\end{proof}

\begin{lemma}\label{lemma_identity_modified_derivative_grav_field_into_regular_fields}
We have $$Y^{\alpha}\nabla \phi=Z^{\alpha}\nabla\phi+\dfrac{1}{e^{2t}}\sum_{d=1}^{|\alpha|}\sum_{|\beta|\leq |\alpha|} P^{\alpha}_{d\beta}(\vphi)Z^{\beta}\nabla\phi,$$ where $P^{\alpha}_{d\beta}(\vphi)$ are multilinear forms of degree $d$ and signature less than $k$ such that $k\leq |\alpha|-1$ and $k+|\beta|\leq |\alpha|.$
\end{lemma}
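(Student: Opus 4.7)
My plan is to prove the lemma by induction on $|\alpha|$. The decisive observation is that $\nabla\phi$ depends only on $(t,x)$ and not on $v$, and this $v$-independence is preserved by every vector field in $\lambda$: for any $v$-independent $F(t,x)$, each of $U_k$, $S_k$, $L$, and $R_{ij}$ produces another $v$-independent function, since its $\partial_v$-components annihilate $F$. Consequently $Z^{\beta}\nabla\phi$ is $v$-independent for every multi-index $\beta$, and on any such function one has the exact identity
\begin{equation*}
S_k F \;=\; e^{-t}(\partial_{x^k}-\partial_{v^k})F \;=\; e^{-t}\partial_{x^k}F \;=\; e^{-2t}\,U_k F,
\end{equation*}
which is the sole source of the overall prefactor $1/e^{2t}$ in the claim.

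The base case $|\alpha|=1$ follows immediately: for $Y = Z-\sum_k \vphi_k S_k \in \lambda_m$, I compute
\begin{equation*}
Y\nabla\phi \;=\; Z\nabla\phi - \sum_k \vphi_k\, S_k\nabla\phi \;=\; Z\nabla\phi \;-\; \frac{1}{e^{2t}}\sum_k \vphi_k\, U_k\nabla\phi,
\end{equation*}
which has the stated form with $d=1$, signature $0$, and $|\beta|=1$. For the inductive step I would decompose $Y^\alpha=YY^{\alpha'}$ with $|\alpha'|=|\alpha|-1$ and apply $Y$ to the inductive identity. The leading term $Y(Z^{\alpha'}\nabla\phi)$ produces $ZZ^{\alpha'}\nabla\phi$ (yielding $Z^\alpha\nabla\phi$ up to commutators that are absorbed via Lemma \ref{lemma_commuting_in_lambda}) together with a correction $-\sum_k \vphi_k S_k(Z^{\alpha'}\nabla\phi) = -e^{-2t}\sum_k \vphi_k\, U_k Z^{\alpha'}\nabla\phi$ of exactly the required form, by the $v$-independence identity. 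For the remaining summands $Y\bigl(e^{-2t}P^{\alpha'}_{d'\beta'}(\vphi)\,Z^{\beta'}\nabla\phi\bigr)$, Leibniz yields either $e^{-2t}\,Y(P^{\alpha'}_{d'\beta'}(\vphi))\,Z^{\beta'}\nabla\phi$ — in which $Y$ raises the signature by one while preserving the degree — or $e^{-2t}\,P^{\alpha'}_{d'\beta'}(\vphi)\,Y(Z^{\beta'}\nabla\phi)$, which is handled by the same $v$-independence argument as in the base case.

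The main technical work lies in the bookkeeping: I would verify that each inductive step either adjoins a single factor of $\vphi$ (raising $d$) or applies one derivative (raising $|\beta|$ or the signature $k$ by at most one), so that the invariants $d\leq|\alpha|$, $k\leq|\alpha|-1$, and $k+|\beta|\leq|\alpha|$ all propagate. The secondary subtlety is that iterated applications of the $v$-independence identity produce \emph{additional} factors of $e^{-2t}$ beyond the single one factored in front; these extra factors give strictly stronger decay and can be absorbed into the $P^\alpha_{d\beta}(\vphi)$ coefficients without altering the claimed structure.
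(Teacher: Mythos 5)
Your strategy — exploit the $v$-independence of $\nabla\phi$ together with the identity $S_kF = e^{-2t}\,e^t\partial_{x^k}F$ for $v$-independent $F$, then induct on $|\alpha|$ — is exactly what the paper's (one-line) proof has in mind, and your base case matches the paper's computation. The flaw is in your closing ``secondary subtlety.'' You are right that if one converts \emph{every} occurrence of $S_k$ acting on a $v$-independent function into $e^{-2t}U_k^x$ during the induction, then iterating produces extra factors $e^{-4t}, e^{-6t},\ldots$ But those cannot be absorbed into $P^\alpha_{d\beta}(\vphi)$: by the paper's definition, the coefficients $C_{\bar{\alpha},\bar{\vphi}}$ of a multilinear form are \emph{uniform constants}, so a time-dependent prefactor does not belong there. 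As written, that step of the argument fails.

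The fix is that the extra factors should never appear. When you apply $Y = Z - \sum_k\vphi_kS_k$ to the inductive correction $\frac{1}{e^{2t}}P^{\alpha'}_{d'\beta'}(\vphi)Z^{\beta'}\nabla\phi$, leave the contribution $-\frac{1}{e^{2t}}\sum_k P^{\alpha'}_{d'\beta'}(\vphi)\,\vphi_k\,S_k(Z^{\beta'}\nabla\phi)$ in the form $-\frac{1}{e^{2t}}\sum_k P^{\alpha'}_{d'\beta'}(\vphi)\,\vphi_k\,S_k^xZ^{\beta'}\nabla\phi$, with $S_k^x=e^{-t}\partial_{x^k}\in\Lambda$, so that $S_k^xZ^{\beta'}\in\Lambda^{|\beta'|+1}$ is already an admissible macroscopic operator $Z^\beta$. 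The single displayed factor $e^{-2t}$ is then inherited unchanged from the inductive hypothesis; $S_k$ is converted to $e^{-2t}U_k^x$ only once, at the stage where one first factors out the prefactor. Equivalently, you may keep your conversion throughout and at the very end push all but one $e^{-2t}$ back into the macroscopic operator via $e^{-2t}U_k^x=S_k^x$: the absorption belongs in $Z^\beta$, not in $P^\alpha_{d\beta}(\vphi)$. With this correction, your degree and signature bookkeeping closes as you sketched. (Also, since you order $Y^\alpha=YY^{\alpha'}$ and $Z^\alpha=ZZ^{\alpha'}$ consistently, the leading term is already exactly $Z^\alpha\nabla\phi$; no commutators, and hence no appeal to Lemma~\ref{lemma_commuting_in_lambda}, are needed there.)
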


\begin{proof}
For $|\alpha|=1$, we have $$Y\nabla \phi=(Z+\vphi S)\nabla \phi=Z\nabla \phi+\dfrac{1}{e^{2t}}\vphi (e^t\partial_x)\nabla \phi.$$ An inductive argument shows the general case.
\end{proof}

\subsection{The bootstrap assumptions}

In this section, we consider distribution functions in the energy space defined in terms of the modified vector fields. For $N\geq 7$, we set the energy $$\E^m_N[f]:=\sum_{|\alpha|\leq N}\sum_{Y^{\alpha}\in \lambda_m^{|\alpha|}}\|Y^{\alpha}f\|_{L^1_{x,v}}.$$ Let $T\geq 0$ be the largest time such that, for all $t\in[0,T]$, we have
\begin{enumerate}[label = (B\arabic*)]
\item $$\E^m_N[f(t)]\leq 2\epsilon.$$ \label{boot1}
\item For every multi-index $\alpha$ with $|\alpha|\leq N-4$ and every $Y^{\alpha}\in \lambda_m^{|\alpha|}$, we have $$|Y^{\alpha}\vphi(t,x,v)|\leq \e^{\frac{1}{2}}(1+t).$$ \label{boot2}
\item For every multi-index $\alpha$ with $|\alpha|\leq N-5$ and every $Y^{\alpha}\in \lambda_m^{|\alpha|}$, we have $$|Y^{\alpha}\nabla\vphi(t,x,v)|\leq \e^{\frac{1}{2}}.$$ \label{boot3}
\item For every multi-index $\alpha$ with $|\alpha|\leq N-3$ and every $Z^{\alpha}\in \Lambda_m^{|\alpha|}$, we have $$|\nabla_x Z^{\alpha}\phi(t,x)|\leq \dfrac{\e^{\frac{1}{2}}}{e^t}.$$ \label{boot4}
\end{enumerate}

\begin{remark}
\begin{enumerate}[label = (\roman*)]
\item The modified vector fields satisfy that $Y^i=Z^i$ at time $t=0$. For this reason, the energy norm of the initial data is equal to $$\E_N[f_0]=\sum_{|\alpha|\leq N}\sum_{Z^{\alpha}\in \lambda^{|\alpha|}}\|Z^{\alpha}f_0\|_{L^1_{x,v}}\leq \e.$$
\item The bootstrap argument set in this subsection can also be used to show global existence for the Vlasov--Poisson system with the potential $\frac{-|x|^2}{2}$ in dimension greater than two. The bootstrap assumptions can be improved in higher dimensions identically as in the two-dimensional case. In the rest of the section, we will only consider $n=2$.
\end{enumerate}
\end{remark}

\subsection{Weighted Sobolev inequality with the modified vector fields}

Using the bootstrap assumptions on $\vphi$, we prove a weighted Sobolev inequality in terms of the modified vector fields.

\begin{proposition}
For every sufficiently regular distribution function $f$, the induced spatial density satisfies $$\rho(|Y^{\alpha}f|)(t,x)\lesssim \dfrac{1}{(e^t+|x|)^2} \sum_{|\beta|\leq |\alpha|+2}\sum_{Y^{\beta}\in \lambda_m^{|\beta|}}\|Y^{\beta}f\|_{L^1_{x,v}}$$ for every $t\geq 0 $, every $x\in \R^2$, and every multi-index $|\alpha|\leq N-2$.
\end{proposition}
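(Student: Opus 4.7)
The plan is to reduce the claimed inequality to Proposition \ref{proposition_weighted_sobolev_absolute_values}, which already provides a weighted Sobolev bound for $\rho(|g|)$ in terms of unmodified commuting vector fields, and then to algebraically convert the resulting unmodified operators into modified ones using the bootstrap control on $\varphi$. First I would apply Proposition \ref{proposition_weighted_sobolev_absolute_values} with $g = Y^\alpha f$ to get
\[
\rho(|Y^\alpha f|)(t,x)\lesssim \frac{1}{(e^t+|x|)^2}\sum_{|\beta|\leq 2}\sum_{Z^\beta\in\lambda_0^{|\beta|}}\|Z^\beta Y^\alpha f\|_{L^1_{x,v}}.
\]
This reduces the problem to expressing each composition $Z^\beta Y^\alpha$ with $|\beta|\leq 2$ as a sum of modified differential operators $Y^\delta$ of order at most $|\alpha|+2$, with multiplicative coefficients depending on $\varphi$.

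For this conversion I would use the defining identity $Z = Y + \sum_k \varphi_k S_k$ iteratively (a direct application of Lemma \ref{lemma_regular_vf_into_modif_vf} to the at most two leading non-modified vector fields, combined with commutation relations analogous to Lemma \ref{lemma_commuting_in_lambda}) to rewrite each $Z^\beta Y^\alpha$ schematically as
\[
Z^\beta Y^\alpha = \sum_{d=0}^{|\beta|}\sum_{|\delta|\leq|\alpha|+|\beta|}P^{\beta\alpha}_{d\delta}(\varphi)\,Y^\delta,
\]
where each coefficient $P^{\beta\alpha}_{d\delta}(\varphi)$ is a multilinear form of degree $d\leq 2$ whose factors are of the form $Y^\gamma\varphi$ with $|\gamma|\leq|\beta|-1\leq 1$. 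Taking $L^1_{x,v}$ norms, pulling the coefficient out via the pointwise $L^\infty$ bound, and invoking the bootstrap assumption \ref{boot2} (applicable since $|\gamma|\leq 1 \leq N-4$ as $N\geq 7$) to control $|Y^\gamma\varphi|\lesssim \e^{1/2}(1+t)$ then produces the desired right-hand side of the claim.

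The main obstacle is the linear-in-time growth of $\varphi$ from \ref{boot2}: a naive $L^\infty$ pull-out yields a prefactor of the form $\bigl(1+\e^{1/2}(1+t)\bigr)^{|\beta|}$ in front of $\sum_{|\delta|\leq|\alpha|+2}\|Y^\delta f\|_{L^1_{x,v}}$. This extra factor is expected to be harmless because the exponential decay $e^{-2t}$ contained in the weight $(e^t+|x|)^{-2}$ dominates any polynomial-in-time growth; alternatively, a cleaner route would be to integrate by parts in the velocity variable to trade $\varphi$ for $\nabla_v\varphi$, which via $2\partial_v = e^{-t}U - e^t S$ is essentially controlled by $\nabla\varphi$ and hence uniformly bounded by \ref{boot3}. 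The delicate point in implementing this finer integration by parts is that it must be carried out at the level of the integrand $\int \partial_{x}|Y^\alpha f|\,dv$ before taking absolute values outside, mirroring the argument used in the proof of Proposition \ref{proposition_weighted_sobolev_absolute_values} where $\int \partial_{v^1}|g|\,dv = 0$ is applied before the chain-rule estimate $|\partial_{x^1}|g||\leq |\partial_{x^1}g|$.
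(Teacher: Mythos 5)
Your first route — apply Proposition \ref{proposition_weighted_sobolev_absolute_values} with $g=Y^{\alpha}f$ and then rewrite each $Z^{\beta}Y^{\alpha}$ ($|\beta|\leq2$) via Lemma \ref{lemma_regular_vf_into_modif_vf} — leaves coefficients $P^{\beta}_{d\delta}(\varphi)$ built from factors $Y^{\gamma}\varphi$ with $|\gamma|\leq1$, so by \ref{boot2} the $L^\infty$ pull-out costs a factor $(1+t)^{|\beta|}\lesssim(1+t)^2$. This loss is \emph{not} absorbable: the weight $(e^t+|x|)^{-2}$ is the prefactor you are trying to prove, not a surplus decay budget, so what you obtain is $(1+t)^2(e^t+|x|)^{-2}\sum\|Y^{\beta}f\|_{L^1_{x,v}}$, strictly weaker than the claim. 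That weaker form would not close the scheme downstream: the proposition feeds (through Lemma \ref{lemma_modified_weighted_Sobolev_Green_function} and the improvement of \ref{boot4}) into the bound $|\nabla_xZ^{\alpha}\phi|\leq\e/e^t$, and an extra $(1+t)^2$ there eventually exceeds $\e^{-1/2}$ and destroys \ref{boot4}. The compensating exponential factors must therefore be extracted from the algebra itself, not from the weight.

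Your second route is in spirit the paper's actual strategy — one re-runs the Sobolev argument of Proposition \ref{proposition_weighted_sobolev_absolute_values} at the integrand level rather than invoking it as a black box — but the identity you rely on does not deliver the needed cancellation. After the velocity integration by parts you must bound $\partial_v\varphi$; the identity $2\partial_v=e^{-t}U-e^tS$ is a tautology whose second piece $e^tS\varphi$ is only controlled by \ref{boot2} as $\lesssim e^t(1+t)\e^{1/2}$, which grows. The useful identity is $\partial_v=e^{-t}U-\partial_x$: then $e^{-t}U\varphi=e^{-t}(Y'+\varphi'S)\varphi\lesssim(1+t)\e^{1/2}/e^t$ decays and $\partial_x\varphi$ is uniformly bounded by \ref{boot3}. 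More generally, the paper first replaces $e^t\partial_{x^1}$ acting on $|Y^{\alpha}f|$ by $U_1=Y_1+\sum_i\varphi_1^iS_i$ using $\int\partial_{v^1}|Y^{\alpha}f|\,dv=0$, then rewrites each correction as $\varphi_1^iS_i=\frac{\varphi_1^i}{e^{2t}}(Y_i+\sum_k\varphi_i^kS_k)-\frac{2\varphi_1^i}{e^t}\partial_{v^i}$; it is the $e^{-2t}$ and $e^{-t}$ produced by these rewritings — together with a second integration by parts for the remaining $\partial_{v^i}$ — that beat the linear growth of $\varphi$. This structure is lost once Proposition \ref{proposition_weighted_sobolev_absolute_values} has been applied as a black box, which is why the conversion cannot be postponed to the $L^1$ level.
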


\begin{proof}
Similarly as in the proof of Proposition \ref{proposition_weighted_sobolev_absolute_values}, we define a real-valued function $\tilde{\psi}:B_2(0,1/2)\to \R$ given by $\tilde{\psi}(y)=\rho(|Y^{\alpha}f|)(t,x+(e^t+|x|)y)$. Using a 1D Sobolev inequality with $\delta=\frac{1}{8}$, we have $$\rho(|Y^{\alpha}f|)(t,x)\lesssim \int_{|y_1|\leq \delta^{1/2}} (|\partial_{y_1}\tilde{\psi}|+|\tilde{\psi}|)(y_1,0)dy_1,$$ where as before 
\begin{align*}
\partial_{y_1}\tilde{\psi}(y)&=(e^t+|x|)\partial_{x^1}\rho(|Y^{\alpha}f|)(t,x+(t+|x|)y)\\
&=e^t\int_v \partial_{x^1}(|Y^{\alpha}f|)(t,x+(e^t+|x|)y,v)dv +|x|\int_v \partial_{x^1}(|Y^{\alpha}f|)(t,x+(e^t+|x|)y,v)dv.
\end{align*}
Now,
\begin{align*}
e^t\int_v \partial_{x^1}(|Y^{\alpha}f|)dv&=\int_v (e^t\partial_{x^1}+e^t\partial_{v^1}-\sum_{i=1}^n \vphi_{1}^i S_i+\sum_{i=1}^n \vphi_{1}^i S_i)(|Y^{\alpha}f|)dv\\
&=\int_v Y_1(|Y^{\alpha}f|)dv+\int_v \sum_{i=1}^n \vphi_{1}^i S_i(|Y^{\alpha}f|)dv.
\end{align*}
The first term on the right-hand side is simply estimated by $$\Big|\int_v Y_1(|Y^{\alpha}f|)dv\Big|\leq \int_v |Y_1Y^{\alpha}f|dv.$$ For the second term, we again make the modified vector fields to appear 

\begin{align*}
\int_v  \vphi_{1}^i S_i(|Y^{\alpha}f|)dv&=\int_v \dfrac{\vphi_{1}^i}{e^{2t}} \Big(e^t\partial_{x^i}+e^t\partial_{v^i}-\sum_{k=1}^n \vphi_i^k S_k+\sum_{k=1}^n \vphi_i^k S_k -2e^t\partial_{v^i}\Big)(|Y^{\alpha}f|)dv\\
&=\int_v \dfrac{\vphi_{1}^i}{e^{2t}} Y_i(|Y^{\alpha}f|)dv+\int_v \dfrac{\vphi_{1}^i}{e^{2t}} \Big(\sum_{k=1}^n \vphi_i^k S_k -2e^t\partial_{v^i}\Big)(|Y^{\alpha}f|)dv.
\end{align*}
The first term on the right-hand side can then be estimated as above, using that $\frac{\vphi_{1}^i}{e^{2t}}$ is uniformly bounded from the bootstrap assumptions. For the remainder terms, we first note that in view of the bootstrap assumptions, the terms of the form $$\Big|\int_v \dfrac{\vphi_{1}^i}{e^{2t}} \vphi_i^k S_k(|Y^{\alpha}f|)dv\Big|.$$ can be estimated by $$\Big|\int_v  |S_k|Y^{\alpha}f||dv\Big|\leq \int_v  |S_kY^{\alpha}f|dv.$$ For the last term, we integrate by parts in the velocity variable 
\begin{align*}
-\int_v \dfrac{\vphi_{1}^i}{e^{t}} \partial_{v^i}(|Y^{\alpha}f|)dv&=\dfrac{1}{e^t}\int_v \partial_{v^i}\vphi_{1}^i |Y^{\alpha}f|dv\\
&=\dfrac{1}{e^{2t}} \int_v \Big(e^{t}\partial_{x^i}+e^{t}\partial_{v^i}-\sum_{k=1}^n \vphi_i^kS_k+\sum_{k=1}^n \vphi_i^kS_k-e^{t}\partial_{x^i}\Big)\vphi_{1}^i  |Y^{\alpha}f|dv\\
&=\dfrac{1}{e^{2t}} \int_v Y_i(\vphi_{1}^i)  |Y^{\alpha}f|dv+\dfrac{1}{e^{2t}}\sum_{k=1}^n \int_v \vphi_i^kS_k(\vphi_{1}^i) |Y^{\alpha}f|dv\\
&\qquad -\dfrac{1}{e^{t}} \int_v \partial_{x^i}(\vphi_{1}^i)  |Y^{\alpha}f|dv.
\end{align*}
The first term only grows like $(1+t)$ according to the bootstrap assumptions, so this growth can be absorbed thanks to the exponential factor in front. The second and third terms can also be absorbed using the exponential factor in front. Putting everything together, we obtain $$\rho(|Y^{\alpha}f|)(t,x)\lesssim \int_{|y_1|\leq \delta^{\frac{1}{2}}}\int_v (|YY^{\alpha}f|+|Y^{\alpha}f|)(t,x+(e^t+|x|)(y_1,0),v)dvdy_1.$$ The remaining of the proof follows as in the proof of Proposition \ref{proposition_weighted_sobolev_absolute_values}, repeating the previous arguments for each of the variables and applying the usual change of coordinates.
\end{proof}

\subsection{Estimates for \texorpdfstring{$\|Y^{\alpha}(\vphi)Y^{\beta}(f)\|_{L^1_{x,v}}$}{bil}}

In this subsection, we prove the core estimates to close the bootstrap argument previously set. We begin proving decay estimates for $\|\nabla_x Z^{\gamma}(\phi)Y^{\alpha}f\|_{L^1_{x,v}}$.

\begin{lemma}\label{lemma_modified_weighted_Sobolev_Green_function}
For every multi-indices $\gamma$ and $\alpha$, with $|\gamma|\leq N$ and $|\alpha|\leq N-2$, we have $$\|\nabla_x Z^{\gamma}(\phi)Y^{\alpha}f\|_{L^1_{x,v}}\lesssim \dfrac{\e}{e^t}\sum_{|\beta|\leq |\gamma|}\|\rho(Z^{\beta}f)\|_{L^1_x}.$$ 
\end{lemma}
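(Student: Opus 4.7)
Following the approach of Lemma~\ref{lemma_decay_bilinear_terms_commuted_vlasov} from the higher-dimensional case, I would express $Z^{\gamma}\phi$ through the Green function of the Laplacian in $\R^2$ applied to the commuted Poisson equation of Lemma~\ref{lemma_commuted_poisson_equation},
$$\Delta Z^{\gamma}\phi=\sum_{|\gamma'|\leq |\gamma|}C^{\gamma}_{\gamma'}\rho(Z^{\gamma'}f).$$
The fundamental solution of $\Delta$ in $\R^2$ is $G(x)=(2\pi)^{-1}\log|x|$, whose gradient satisfies $|\nabla G(y)|\lesssim |y|^{-1}$, so that one obtains the pointwise bound
$$|\nabla_x Z^{\gamma}\phi(t,x)|\lesssim \sum_{|\gamma'|\leq |\gamma|}\int_{\R^2}\frac{|\rho(Z^{\gamma'}f)(t,y)|}{|x-y|}\,dy.$$

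Since $\nabla_x Z^{\gamma}\phi$ depends only on $x$, the norm to be estimated can be rewritten as
$$\|\nabla_x Z^{\gamma}(\phi)\,Y^{\alpha}f\|_{L^1_{x,v}}=\int_{\R^2}|\nabla_x Z^{\gamma}\phi(t,x)|\,\rho(|Y^{\alpha}f|)(t,x)\,dx,$$
and the factor $\rho(|Y^{\alpha}f|)$ is controlled by the modified weighted Sobolev inequality proved earlier in this section. Since $|\alpha|\leq N-2$ by hypothesis, this inequality together with the bootstrap assumption \ref{boot1} yields $\rho(|Y^{\alpha}f|)(t,x)\lesssim \e/(e^t+|x|)^2$. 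Plugging these two pointwise bounds into the displayed integral and applying Fubini reduces the whole estimate to controlling the kernel
$$I(y,t):=\int_{\R^2}\frac{dx}{|x-y|\,(e^t+|x|)^2}.$$
The change of variables $x=e^tz$ gives $I(y,t)=e^{-t}\int_{\R^2}|z-e^{-t}y|^{-1}(1+|z|)^{-2}\,dz$, and the remaining integral is uniformly bounded in its parameter by Lemma~\ref{lemma_uniform_integral_bound_kernel_convolution_duan} specialised to $n=2$, since the exponents $|y|^{n-1}$ and $(1+|x+y|)^n$ appearing there become exactly $|y|$ and $(1+|x+y|)^2$. Therefore $I(y,t)\lesssim e^{-t}$, and putting everything together produces the advertised factor $\e/e^t$ multiplying $\sum_{|\gamma'|\leq|\gamma|}\|\rho(Z^{\gamma'}f)\|_{L^1_x}$.

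The argument is essentially routine bookkeeping; no serious obstacle arises. The only two points worth noting are that the $Y^{\alpha}f$ factor must be handled through its spatial density in order to exploit Fubini against the $x$-only quantity $\nabla_x Z^{\gamma}\phi$, and that the $\vphi$-dependence of the modified vector fields never appears directly in this estimate because it has already been absorbed into the weighted Sobolev inequality through bootstrap assumptions \ref{boot2}--\ref{boot3}. The dimensional criticality of 2D is felt only through the slower decay $|\nabla G|\sim |y|^{-1}$, which is exactly compensated by the sharp weight $(e^t+|x|)^{-2}$ from the Sobolev inequality to yield the single factor of $e^{-t}$.
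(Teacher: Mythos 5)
Your proof is correct and takes exactly the approach the paper compresses into a one-line reference: Green-function representation for $\nabla_x Z^{\gamma}\phi$ via the commuted Poisson equation, the modified weighted Sobolev inequality together with the bootstrap assumption to get $\rho(|Y^{\alpha}f|)(t,x)\lesssim \e(e^t+|x|)^{-2}$ (which is where the hypothesis $|\alpha|\le N-2$ and, implicitly, bootstrap assumptions \ref{boot2}--\ref{boot3} enter), and then Fubini with the $n=2$ case of Lemma \ref{lemma_uniform_integral_bound_kernel_convolution_duan} after the rescaling $x=e^t z$ to produce the single factor $e^{-t}$. Your remark that the $\vphi$-dependence is entirely absorbed into the modified Sobolev inequality is precisely the point the paper flags with ``the argument uses the weighted Sobolev inequality with modified vector fields.''
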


\begin{proof}
The proof is exactly the same as in the higher dimensional case, where we have used the representation formula for $\nabla_x Z^{\gamma}\phi$. The argument uses the weighted Sobolev inequality with modified vector fields. 
\end{proof}

\begin{lemma}\label{lem_main_estimate_2d}
For every sufficiently small $\sigma>0$, there exist constants $C_{\sigma}$ and $\e_{\sigma}$ such that if $\e\leq \e_{\sigma}$, then, for all multi-indices $\alpha$ and $\beta$, with $|\alpha|\leq N-1$, $|\beta|\leq N$, and $|\alpha|+|\beta|\leq N+1$, we have $$\|Y^{\alpha}(\vphi)Y^{\beta}(f)\|_{L^1_{x,v}}\leq C_{\sigma}e^{t\sigma}\e.$$ Moreover, for all multi-indices $\alpha$ and $\beta$, with $|\alpha|\leq N-2$, $|\beta|\leq N$, and $|\alpha|+|\beta|\leq N$, and all $1\leq i\leq 2$, we have $$\|Y^{\alpha}(\partial_{x^i}\vphi)Y^{\beta}(f)\|_{L^1_{x,v}}\leq C_{\sigma}\e.$$ 
\end{lemma}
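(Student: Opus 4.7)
The plan is to perform an $L^1_{x,v}$ energy estimate on $Y^\alpha \vphi \cdot Y^\beta f$ transported along $\T_\phi$. Because $\vphi_k^i(0,\cdot)\equiv 0$ by construction of the modified vector fields, and because the measure $dxdv$ is preserved by the characteristic flow of $\T_\phi$ (Liouville's theorem), one has
\[
\|Y^\alpha \vphi \cdot Y^\beta f(t)\|_{L^1_{x,v}} \leq \int_0^t \|\T_\phi(Y^\alpha \vphi \cdot Y^\beta f)(s)\|_{L^1_{x,v}}\, ds.
\]
Applying the Leibniz rule and $\T_\phi f = 0$, the integrand splits into the principal source $Y^\alpha(\T_\phi \vphi)\cdot Y^\beta f$ and the two commutator pieces $[\T_\phi,Y^\alpha]\vphi\cdot Y^\beta f$ and $Y^\alpha \vphi\cdot [\T_\phi,Y^\beta]f$, both governed by Lemma \ref{lemma_commutation_formula_modified}.

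By the defining equation for $\vphi_k^i$, the principal source equals $-\tfrac{\mu}{2}e^t\, Y^\alpha \partial_{x^k}(Z^i \phi + c_i\phi)$. Replacing $Y^\alpha \partial_x$ by $Z^\alpha \partial_x$ plus lower-order $\vphi$-corrections weighted by $e^{-2t}$ (as in Lemma \ref{lemma_identity_modified_derivative_grav_field_into_regular_fields}) and then applying Lemma \ref{lemma_modified_weighted_Sobolev_Green_function} to pair $\nabla Z^\gamma\phi$ with $Y^\beta f$, one obtains
\[
\|e^t Y^\alpha \partial_x(Z^i\phi + c_i\phi)\cdot Y^\beta f\|_{L^1_{x,v}} \lesssim e^t \cdot \frac{\epsilon}{e^t}\sum_{|\delta|\leq |\alpha|+1}\|\rho(Z^\delta f)\|_{L^1_x} \lesssim \epsilon^2,
\]
uniformly in $s$, which contributes $O(\epsilon^2 t)$ after time integration. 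The commutator pieces expand, via Lemma \ref{lemma_commutation_formula_modified}, into products of $\vphi$-factors (pointwise controlled by bootstrap \ref{boot2}), derivatives $\partial_x Z^\gamma \phi$ (controlled either pointwise by \ref{boot4} or in $L^1$ by Lemma \ref{lemma_modified_weighted_Sobolev_Green_function}), and a remaining bilinear factor of the same type with strictly lower or equal multi-indices; this sets up a Gr\"onwall loop with coefficient $\lesssim \epsilon^{1/2}/e^s$, integrable over $[0,\infty)$.

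Closing the Gr\"onwall loop (which contributes a harmless factor $\exp(O(\epsilon^{1/2}))$) and invoking the elementary inequality $t\leq \sigma^{-1} e^{\sigma t}$ yield $\|Y^\alpha \vphi \cdot Y^\beta f\|_{L^1_{x,v}} \leq C_\sigma \epsilon\, e^{\sigma t}$ for $\epsilon\leq \epsilon_\sigma$ small enough. The second bound for $\|Y^\alpha(\partial_{x^i}\vphi)\cdot Y^\beta f\|_{L^1_{x,v}}$ follows by the same scheme applied to the transport equation for $\partial_{x^i}\vphi$: its source term carries an extra spatial derivative on $\phi$, which via the improved decay for derivatives of $\rho$ (Corollary \ref{corollary_improved_decay_spatial_density}) gains an additional $e^{-t}$; the resulting source $e^t \partial_x^2 Z^\gamma\phi$ is then of order $\epsilon/e^s$, integrable in time, which removes the $t$-growth and produces the time-uniform bound $C_\sigma \epsilon$. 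The main obstacle is the derivative-counting bookkeeping when $|\alpha|$ reaches the top of its range $N-1$ (resp.\ $N-2$), forcing $|\beta|$ to be small via the constraint $|\alpha|+|\beta|\leq N+1$ (resp.\ $N$) and precluding pointwise use of \ref{boot2} or \ref{boot3} on $Y^\alpha\vphi$; in that regime the argument must be routed entirely through the $L^1$ energy scheme together with Lemma \ref{lemma_modified_weighted_Sobolev_Green_function}, and the hypothesis $N\geq 7$ is precisely what ensures that the derivative budgets of Lemmas \ref{lemma_commutation_formula_modified}--\ref{lemma_modified_weighted_Sobolev_Green_function} align so that every commutator term admits an admissible bound.
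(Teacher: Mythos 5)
Your decomposition into the principal source $Y^\alpha(\T_\phi\vphi)\cdot Y^\beta f$ and two commutator pieces matches the paper's $N_3, N_2, N_1$ split, and the case split on whether $|\alpha|\le N-4$ (pointwise bootstrap) or not (energy scheme) is the right starting point. But the central estimate on which your argument rests is wrong: you claim $\|e^t Y^\alpha\partial_x(Z^i\phi+c_i\phi)\cdot Y^\beta f\|_{L^1_{x,v}}\lesssim e^t\cdot\frac{\e}{e^t}\sum_{|\delta|\le|\alpha|+1}\|\rho(Z^\delta f)\|_{L^1_x}\lesssim \e^2$. The second inequality is not available. The bootstrap assumption \ref{boot1} controls $\|Y^\delta f\|_{L^1_{x,v}}$ for \emph{modified} vector fields; to pass to $\|Z^\delta f\|_{L^1_{x,v}}$ one must invoke Lemma \ref{lemma_regular_vf_into_modif_vf}, which re-introduces multilinear forms $P^\delta_{d\beta}(\vphi)$, and when the signature of these forms exceeds $N-4$ they cannot be bounded pointwise by \ref{boot2}. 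In precisely that regime the principal source does not close on its own: after Lemma \ref{lemma_spatial_density_non_modified_vs_into_modified} the term $P_1$ produces contributions of the form $\G(t)$ and $(1+t)\G(t)$, and $P_2$ produces $\frac{(1+t)^N}{e^t}\F(t)$, so the correct bound on $N_3$ is $\e^2+\e\F(t)+\e e^{t\sigma_0}\G(t)+\frac{\e^{1/2}}{e^{t(1-\sigma_0)}}\F(t)$ rather than $\e^2$.

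This mistake propagates. You assert the Gr\"onwall coefficient is $\lesssim \e^{1/2}/e^s$ (integrable) and therefore contributes only a constant factor $\exp(O(\e^{1/2}))$; in fact the $\e\F(t)$ term in the integrand gives a \emph{non-decaying} coefficient $\e^{1/2}$, hence a genuinely time-growing factor $e^{tC\e^{1/2}}$ after Gr\"onwall. The $e^{\sigma t}$ in the statement is not an artifact of $t\le\sigma^{-1}e^{\sigma t}$; it records exactly this $e^{tC\e^{1/2}}$ growth (together with the $e^{t\sigma_0}$ inhomogeneity from the $|\alpha|\le N-4$ case), and the reason $\sigma$ can be taken small is that $C\e^{1/2}$ can be made small. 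Finally, $\F$ and $\G$ do not decouple the way your last paragraph suggests: each appears in the other's integral inequality. The paper handles this by first applying Gr\"onwall to the $\F$-inequality with $\G$ treated as data, substituting the result into the $\G$-inequality, then exchanging the order of integration to turn the resulting iterated integral into an integrable kernel acting on $\G$, and only then closing. Without this coupled treatment, the $\G(t)\lesssim\e$ bound (which you need uniformly, not with $e^{\sigma t}$ slack) does not follow. The $N\ge 7$ hypothesis is also load-bearing in a more structural way than you describe: it guarantees that when $|\gamma|>N-3$ forces the $L^1$ route, the complementary index $|\beta|$ and the signature $k$ are both $\le N-4$, so all remaining $\vphi$-factors are admissible for the pointwise bootstrap.
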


\begin{proof}
Let us denote 
\begin{align*}
 \F(t)&:=\sum_{|\alpha|\leq N-1}\sum\limits_{\substack{ |\beta|\leq N\\ |\alpha|+|\beta|\leq N+1 }} \|Y^{\alpha}(\vphi)(t)Y^{\beta}(f)(t)\|_{L^1_{x,v}},\\
 \G(t)&:=\sum_{|\alpha|\leq N-2}\sum_{|\alpha|+|\beta|\leq N} \sum_{i=1}^2 \|Y^{\alpha}(\partial_{x^i}\vphi)(t)Y^{\beta}(f)(t)\|_{L^1_{x,v}}.
\end{align*}
By the bootstrap assumptions, if $|\alpha|\leq N-4$, then $$\|Y^{\alpha}(\vphi)(t)Y^{\beta}(f)(t)\|_{L^1_{x,v}}\lesssim \e^{\frac{1}{2}}(1+t)\|Y^{\beta}(f)\|_{L^1_{x,v}}\lesssim e^{t\sigma_0}\e^{\frac{3}{2}},$$ where $\sigma_0>0$ is a small constant that is to be fixed later. Similarly, if $|\alpha|\leq N-5$, we have $$\|Y^{\alpha}(\partial_{x^i}\vphi)(t)Y^{\beta}(f)(t)\|_{L^1_{x,v}}\lesssim \e^{\frac{3}{2}}.$$ If $|\alpha|>N-4$, then, we have $|\beta|\leq N-3$ since $N\geq 7$. In this case, we estimate the terms $\|Y^{\alpha}(\vphi)(t)Y^{\beta}(f)(t)\|_{L^1_{x,v}}$ through the method of characteristics $$\|Y^{\alpha}(\vphi)(t)Y^{\beta}(f)(t)\|_{L^1_{x,v}}\leq \int_0^t \|\T_{\phi}(Y^{\alpha}(\vphi)Y^{\beta}(f))\|_{L^1_{x,v}}(s)ds.$$ We decompose the term $\T_{\phi}(Y^{\alpha}(\vphi)Y^{\beta}(f))$ into three different contributions defined by $$\T_{\phi}(Y^{\alpha}(\vphi)Y^{\beta}(f))=Y^{\alpha}(\vphi)\T_{\phi}(Y^{\beta}(f))+[\T_{\phi},Y^{\alpha}](\vphi)Y^{\beta}(f)+Y^{\alpha}\T_{\phi}(\vphi)Y^{\beta}(f)=:N_1+N_2+N_3.$$

\textbf{Estimate of $N_1$.} By the commutation formula (\ref{identity_commutation_formula_modified}), we have $$N_1=\sum_{d=0}^{|\beta|+1} \sum_{i=1}^2 \sum_{|\beta'|, |\gamma|\leq |\beta|} P^{\beta i}_{d\gamma\beta'}(\vphi)\partial_{x^i}Z^{\gamma}(\phi)Y^{\beta'}(f)Y^{\alpha}(\vphi).$$ For $|\beta|\leq N-3$, the signatures of the multilinear forms $P^{\beta i}_{d\gamma\beta'}(\vphi)$ are less than $N-4$. By the bootstrap assumptions, we have $$|P^{\beta i}_{d\gamma\beta'}(\vphi)|\lesssim (1+t)^N\lesssim e^{t\sigma_0},$$ $$|\partial_{x^i}Z^{\gamma}(\vphi)|\lesssim \dfrac{\e^{\frac{1}{2}}}{e^t},$$ where $\sigma_0\in (0,1)$ is a small number to be set later. As a result, we obtain that $$\|N_1\|_{L^1_{x,v}}\lesssim \dfrac{\e^{\frac{1}{2}}}{e^{t(1-\sigma_0)}}\F(t).$$
 
\textbf{Estimate of $N_2$.} By the commutation formula (\ref{identity_commutation_formula_modified}), we have $$N_2=\sum_{d=0}^{|\alpha|+1} \sum_{i=1}^2 \sum_{|\beta'|, |\gamma|\leq |\alpha|} P^{\alpha i}_{d\gamma\beta'}(\vphi)\partial_{x^i}Z^{\gamma}(\phi)Y^{\beta'}(\vphi)Y^{\beta}(f),$$ where the multilinear forms $P^{\alpha i}_{d\gamma\beta'}$ has signature less than $k\leq |\alpha|-1$ and $k+|\gamma|+|\beta'|\leq |\alpha|+1\leq N$. If $|\gamma|\leq N-3$, then $$|\partial_{x^i}Z^{\gamma}(\phi)|\lesssim \dfrac{\e^{\frac{1}{2}}}{e^t}.$$ The term $P^{\alpha i}_{d\gamma\beta'}(\vphi)Y^{\beta'}(\vphi)$ is a multi-linear form with at most one factor $Y^{\alpha'}(\vphi)$ with $N-4<|\alpha'|\leq |\alpha|$, while the remaining terms can be uniformly bounded by $(1+t)^N\lesssim e^{t\sigma_0}$. Therefore, we obtain $$\|P^{\alpha i}_{d\gamma\beta'}(\vphi)\partial_{x^i}Z^{\gamma}(\phi)Y^{\beta'}(\vphi)Y^{\beta}(f)\|_{L^1_{x,v}}\lesssim \dfrac{\e^{\frac{1}{2}}}{e^{t(1-\sigma_0)}}\F(t).$$ If $|\gamma|>N-3$, then, by the bootstrap assumptions $$|P^{\alpha i}_{d\gamma\beta'}(\vphi)Y^{\beta'}(\vphi)|\lesssim (1+t)^N.$$ By Lemma \ref{lemma_modified_weighted_Sobolev_Green_function}, we have $$\|\nabla_x Z^{\gamma}(\phi)Y^{\beta}f\|_{L^1_{x,v}}\lesssim \dfrac{\e}{e^t}\sum_{|\eta|\leq |\gamma|}\|Z^{\eta}f\|_{L^1_{x,v}},$$ since $|\gamma|\leq |\alpha|\leq N-1.$ By Lemma \ref{lemma_regular_vf_into_modif_vf}, we have $$\|Z^{\eta}f\|_{L^1_{x,v}}\leq \sum_{d'=0}^{|\eta|}\sum_{|\eta'|\leq|\eta|}\|P^{\eta}_{d'\eta'}(\vphi)Y^{\eta'}(f)\|_{L^1_{x,v}}\lesssim (1+t)^N\F(t),$$ so we obtain $$\|P^{\alpha i}_{d\gamma\beta'}(\vphi)\partial_{x^i}Z^{\gamma}(\phi)Y^{\beta'}(\vphi)Y^{\beta}(f)\|_{L^1_{x,v}}\lesssim \e\dfrac{(1+t)^{2N}}{e^t}\F(t)\lesssim \dfrac{\e}{e^{t(1-\sigma_0)}}\F(t).$$ Putting the previous estimates together, we have $$\|N_2\|_{L^1_{x,v}}\lesssim \dfrac{\e^{\frac{1}{2}}}{e^{t(1-\sigma_0)}}\F(t).$$

\textbf{Estimate of $N_3$.} Let us recall the equation that defines the modification of the vector fields give by $$\T_{\phi}(\vphi)=e^t\sum_{i=1}^2\sum_{|\eta|\leq 1}c_{Z,i}\partial_{x^i}Z^{\eta}\phi.$$ By Lemma \ref{lemma_identity_modified_derivative_grav_field_into_regular_fields}, we have $$N_3=e^t\sum_{i=1}^2\sum_{|\eta|\leq |\alpha|+1}c_{\eta,i}\partial_{x^i}Z^{\eta}(\phi)Y^{\beta}(f)+\sum_{d=1}^{|\alpha|}\sum_{i=1}^2\sum_{|\eta|\leq |\alpha|+1}P^{\alpha}_{d\eta}(\vphi)\partial_{x^i}Z^{\eta}(\phi)Y^{\beta}(f)=:I_3^A+I_3^B,$$ where $P^{\alpha}_{d\eta}(\vphi)$ are multi-linear forms of degree $d$ with signatures less than $k$ satisfying $k\leq |\alpha|\leq N-1$ and $k+|\eta|\leq |\alpha|+1.$ If $|\eta|\leq N-3$, we have $$|\partial_{x^i}Z^{\eta}(\phi)|\lesssim \dfrac{\e^{\frac{1}{2}}}{e^t},$$ $$\|P^{\alpha}_{d\eta}(\vphi)Y^{\beta}(f)\|_{L^1_{x,v}}\lesssim (1+t)^N\F(t),$$ so we have $$\|P^{\alpha}_{d\eta}(\vphi)\partial_{x^i}Z^{\eta}(\phi)Y^{\beta}(f)\|_{L^1_{x,v}}\lesssim \dfrac{\e^{\frac{1}{2}}}{e^{t(1-\sigma_0)}}\F(t).$$ If $|\eta|> N-3$, we have $$|P^{\alpha}_{d\eta}(\vphi)|\lesssim (1+t)^N,$$ so we obtain the estimate $$\|N_3\|_{L^1_{x,v}}\lesssim e^t\sum_{|\eta|\leq |\alpha|+1}\|\partial_{x^i}Z^{\eta}(\phi)Y^{\beta}(f)\|_{L^1_{x,v}}+\dfrac{\e^{\frac{1}{2}}}{e^{t(1-\sigma_0)}}\F(t).$$ By Lemma \ref{lemma_modified_weighted_Sobolev_Green_function}, we have $$\|\partial_{x^i}Z^{\eta}(\phi)Y^{\beta}(f)\|_{L^1_{x,v}}\lesssim \sum_{|\eta'|\leq |\eta|}\dfrac{\e}{e^t}\|\rho(Z^{\eta'}f)\|_{L^1_x}\lesssim \dfrac{\e^2}{e^t}+\dfrac{\e}{e^t}\sum_{1\leq |\eta'|\leq |\eta|}\|\rho(Z^{\eta'}f)\|_{L^1_x}.$$ For $|\eta'|\geq 1$, we can write $Z^{\eta'}f=Z^{\eta''}(Zf)$ where $0\leq |\eta''|=|\eta'|-1\leq N-1.$ Applying Lemma \ref{lemma_spatial_density_non_modified_vs_into_modified} to $Zf$, we have 
\begin{align*}
\rho(Z^{\eta'}f)&=\sum_{d=0}^{|\eta''|}\sum_{|\beta'|\leq |\eta''|}\rho(Q_{d\beta'}^{\eta''}(\partial_x\vphi)Y^{\beta'}(Zf))+\sum_{j=1}^{|\eta''|}\sum_{d=1}^{|\eta''|+1}\sum_{|\beta'|\leq |\eta|}\dfrac{1}{e^{2jt}}\rho(P_{d\beta'}^{\eta'' j}(\vphi)Y^{\beta'}(Zf))\\
&=:P_1+P_2,
\end{align*}
where $Q_{d\beta'}^{\eta''}(\partial_x\vphi)$ are multilinear forms with respect to $\partial_x\vphi$ of degree $d$ and signature less than $k'$ such that $k'\leq |\eta''|-1\leq N-2$ and $k'+d+|\beta'|\leq |\eta''|$, and  $P_{d\beta'}^{\eta'' j}(\vphi)$ are multilinear forms of degree $d$ and signature less than $k$ such that $k\leq |\eta''|$ and $k+|\beta'|\leq |\eta''|\leq N-1.$ For the term $P_1$, we have 
\begin{align*}
\rho(Q_{d\beta'}^{\eta''}(\partial_x\vphi)Y^{\beta'}Zf)&=\rho(Q_{d\beta'}^{\eta''}(\partial_x\vphi)Y^{\beta'}(Yf+c_Y\vphi Sf))\\
&=\rho(Q_{d\beta'}^{\eta''}(\partial_x\vphi)Y^{\beta'}Yf)+\sum_{|\beta''|\leq|\beta'|}c_{Y\beta''}\rho(Q_{d\beta'}^{\eta''}(\partial_x\vphi)Y^{\beta''}(\vphi)Y^{\beta'-\beta''}Sf).
\end{align*}
Since $k'\leq N-2$ and $k'+d+|\beta'|+1\leq |\eta''|+1<N+1$, we have $$\|\rho(Q_{d\beta'}^{\eta''}(\partial_x\vphi)Y^{\beta'}Yf)\|_{L^1_x}\lesssim \G(t).$$ For the second contribution of the term $P_1$, we have either $k'+d\leq N-4$ or $|\beta''|\leq N-4$, so by the bootstrap assumptions $$\|\rho(Q_{d\beta'}^{\eta''}(\partial_x\vphi)Y^{\beta''}(\vphi)Y^{\beta'-\beta''}Sf)\|_{L^1_x}\lesssim \F(t)+(1+t)\G(t).$$ Therefore, the term $P_1$ satisfies $$\|P_1\|_{L^1_x}\lesssim \F(t)+e^{t\sigma_0}\G(t).$$ Using the identity $Z=Y+\vphi S$, the term $P_2$ can be estimated as $$\|P_2\|_{L^1_x}\lesssim \dfrac{(1+t)^N}{e^t}\F(t).$$ Putting the previous bounds together, we obtain $$\|N_3\|_{L^1_{x,v}}\lesssim  \e^2+\e\F(t)+\e e^{t\sigma_0}\G(t)+\dfrac{\e^{\frac{1}{2}}}{e^{t(1-\sigma_0)}}\F(t).$$ In the case when $Y^{\alpha}=Y^{\alpha'}\partial_{x^l}$, then, the term $N_3$ is given by $$N_3=e^t\sum_{i=1}^2\sum_{|\eta|\leq |\alpha|}c_{\eta,i,l}\partial_{x^i}\partial_{x^l}Z^{\eta}(\phi)Y^{\beta}(f)+\sum_{d=1}^{|\alpha|-1}\sum_{i=1}^2\sum_{|\eta|\leq |\alpha|}P^{\alpha il}_{d\eta}(\vphi)\partial_{x^i}\partial_{x^l}Z^{\eta}(\phi)Y^{\beta}(f).$$ Using the vector field $e^t\partial_{x^l}\in \Lambda$, the estimate of $N_3$ is improved by $$\|N_3\|_{L^1_{x,v}}\lesssim \sum_{|\eta|\leq |\alpha|+1}\|\partial_{x^i}Z^{\eta}(\phi)Y^{\beta}(f)\|_{L^1_{x,v}}+\dfrac{\e^{\frac{1}{2}}}{e^{t(1-\sigma_0)}}\F(t).$$ As a result, we obtain the improved estimate $$\|N_3\|_{L^1_{x,v}}\lesssim  \dfrac{\e^2}{e^t}+\dfrac{\e}{e^{t(1-\sigma_0)}}\G(t)+\dfrac{\e^{\frac{1}{2}}}{e^{t(1-\sigma_0)}}\F(t).$$ Summarizing, for every $|\alpha|>N-4$, we have $$\|\T_{\phi}(Y^{\alpha}(\vphi)Y^{\beta}f)\|_{L^1_{x,v}}\lesssim \e^2+\e\F(t)+\e e^{t\sigma_0}\G(t)+\dfrac{\e^{\frac{1}{2}}}{e^{t(1-\sigma_0)}}\F(t).$$ And for every $|\alpha|>N-5$, we have $$\|\T_{\phi}(Y^{\alpha}(\partial_x\vphi)Y^{\beta}f)\|_{L^1_{x,v}}\lesssim \dfrac{\e^2}{e^t}+\dfrac{\e}{e^{t(1-\sigma_0)}}\G(t)+\dfrac{\e^{\frac{1}{2}}}{e^{t(1-\sigma_0)}}\F(t).$$ Thus, by the method of characteristics we obtain 
\begin{align*}
\|Y^{\alpha}(\vphi)(t)Y^{\beta}(f)(t)\|_{L^1_{x,v}}&\leq \int_0^t \|\T_{\phi}(Y^{\alpha}(\vphi)Y^{\beta}(f))\|_{L^1_{x,v}}(s)ds\\
&\lesssim \e^2 t+\e^{\frac{1}{2}}\int_0^t \F(s)ds+\e \int_0^t e^{s\sigma_0}\G(s)ds,
\end{align*}
and
\begin{align*}
\|Y^{\alpha}(\partial_{x}\vphi)(t)Y^{\beta}(f)(t)\|_{L^1_{x,v}}&\leq \int_0^t \|\T_{\phi}(Y^{\alpha}(\partial_{x}\vphi)Y^{\beta}(f))\|_{L^1_{x,v}}(s)ds\\
&\lesssim \e^2+\e\int_0^t \dfrac{1}{e^{s(1-\sigma_0)}}\G(s)ds+\e^{\frac{1}{2}}\int_0^t\dfrac{1}{e^{s(1-\sigma_0)}}\F(s) ds.
\end{align*}
Therefore, we have
\begin{align*}
\F(t)&\lesssim \e^{\frac{3}{2}} e^{t\sigma_0}+\e^{\frac{1}{2}}\int_0^t \F(s)ds+\e \int_0^t e^{s\sigma_0}\G(s)ds,
 \\
\G(t)&\lesssim \e+\e\int_0^t \dfrac{1}{e^{s(1-\sigma_0)}}\G(s)ds+\e^{\frac{1}{2}}\int_0^t\dfrac{1}{e^{s(1-\sigma_0)}}\F(s) ds.
\end{align*}
Applying Gronwall's lemma to the estimate for $\F(t)$, we have $$\F(t)\lesssim\Big(\e^{\frac{3}{2}} e^{t\sigma_0}+\e \int_0^t e^{s\sigma_0}\G(s)ds\Big)e^{tC\e^{\frac{1}{2}}}.$$ Applying this estimate to the bound of $\G(t)$, we have 
\begin{align*}
\G(t)&\lesssim \e+\e\int_0^t \dfrac{1}{e^{s(1-\sigma_0)}}\G(s)ds+\e^{2}\int_0^t\dfrac{1}{e^{s(1-2\sigma_0-C\e^{\frac{1}{2}})}} ds\\
&\qquad \qquad  \qquad +\e^{\frac{3}{2}}\int_0^t\dfrac{1}{e^{s(1-\sigma_0-C\e^{\frac{1}{2}})}}\int_0^s e^{\tau\sigma_0}\G(\tau)d\tau ds,
\end{align*}
where the last term satisfies 
\begin{align*}
\e^{\frac{3}{2}}\int_0^t\dfrac{1}{e^{s(1-\sigma_0-C\e^{\frac{1}{2}})}}\int_0^s e^{\tau\sigma_0}\G(\tau)d\tau ds&=\e^{\frac{3}{2}}\int_0^t e^{\tau\sigma_0}\G(\tau) \int_{\tau}^t \dfrac{ds}{e^{s(1-\sigma_0-C\e^{\frac{1}{2}})}}d\tau.\\
&\leq \dfrac{\e^{\frac{3}{2}}}{1-\sigma_0-C\e^{\frac{1}{2}}}\int_0^t  \dfrac{\G(\tau)}{e^{\tau(1-2\sigma_0-C\e^{\frac{1}{2}})}}d\tau.
\end{align*}
Choosing $\sigma_0$ and $\e_{\sigma}$ such that $2\sigma_0+C\e_{\sigma}^{\frac{1}{2}}\leq \min \{\frac{1}{2},\sigma\}$, we have $$\G(t)\lesssim \e,\qquad \F(t)\lesssim \e e^{t\sigma}.$$
\end{proof}

\subsection{Improving the bootstrap assumptions}

In this subsection, we improve the bootstrap assumptions \ref{boot1}-\ref{boot4} by applying the estimates for the terms $\|Y^{\alpha}(\vphi)Y^{\beta}(f)\|_{L^1_{x,v}}$.

\begin{lemma}
Let $f_0$ be an initial distribution function satisfying $\E^m_N[f_0]\leq \e$. If $\e>0$ is sufficiently small, then, for all $t\in [0,T]$, we have $$\E^m_N[f(t)]\leq \dfrac{3}{2}\e.$$ 
\end{lemma}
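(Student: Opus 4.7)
The plan is to apply the standard $L^1$ energy identity to the modified-commuted distribution function. The operator $\mathbb{T}_\phi$ has divergence-free transport field in $(x,v)$, so for any multi-index $\alpha$ with $|\alpha|\leq N$,
\[
\|Y^{\alpha}f(t)\|_{L^1_{x,v}} \leq \|Y^{\alpha}f(0)\|_{L^1_{x,v}} + \int_0^t \|\mathbb{T}_\phi(Y^{\alpha}f)(s)\|_{L^1_{x,v}}\,ds.
\]
Since $Y^i|_{t=0}=Z^i$ (the modification vanishes at $t=0$), the first term is dominated by $\mathcal{E}_N[f_0]\leq\epsilon$. Because $\mathbb{T}_\phi f=0$, the inhomogeneous term equals $[\mathbb{T}_\phi,Y^\alpha]f$, and the commutation formula of Lemma \ref{lemma_commutation_formula_modified} expresses it as a sum of terms of the schematic form $P^{\alpha i}_{d\gamma\beta}(\varphi)\,\partial_{x^i}Z^\gamma(\phi)\,Y^\beta f$ with signature constraints $k+|\gamma|+|\beta|\leq|\alpha|+1$ and $k\leq|\alpha|-1$, and with $|\beta|,|\gamma|\leq|\alpha|\leq N$. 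The heart of the argument is to show that each such term has $L^1_{x,v}$ norm bounded pointwise in time by $C_\sigma\epsilon^{3/2}e^{-t(1-2\sigma)}$ for suitably small $\sigma>0$, so that the time integral contributes at most $O(\epsilon^{3/2})$.

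I would split the analysis by the order $|\gamma|$ of the derivative falling on $\phi$. When $|\gamma|\leq N-3$, the bootstrap \ref{boot4} gives the pointwise decay $|\partial_{x^i}Z^\gamma\phi|\leq\epsilon^{1/2}/e^t$, and it remains to control $\|P^{\alpha i}_{d\gamma\beta}(\varphi)Y^\beta f\|_{L^1_{x,v}}$. Each factor of the multilinear form $P$ is some $Y^{\alpha_j}\varphi$ with $\sum|\alpha_j|=k$. If every $|\alpha_j|\leq N-4$, bootstrap \ref{boot2} yields $|P^{\alpha i}_{d\gamma\beta}(\varphi)|\lesssim\epsilon^{d/2}(1+t)^N$ and \ref{boot1} gives $\|Y^\beta f\|_{L^1_{x,v}}\leq 2\epsilon$; if one factor $Y^{\alpha_{j_0}}\varphi$ exceeds order $N-4$, I pair it with $Y^\beta f$ and invoke Lemma \ref{lem_main_estimate_2d} (whose hypotheses $|\alpha_{j_0}|\leq k\leq N-1$, $|\beta|\leq N$, $|\alpha_{j_0}|+|\beta|\leq k+|\beta|\leq|\alpha|+1\leq N+1$ are all guaranteed by the signature constraint) to obtain $\|Y^{\alpha_{j_0}}(\varphi)Y^\beta f\|_{L^1_{x,v}}\leq C_\sigma\epsilon e^{t\sigma}$, while the remaining $Y^{\alpha_j}\varphi$ factors have order $\leq N-4$ and are again controlled by \ref{boot2}. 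Combining with the prefactor $\epsilon^{1/2}/e^t$ gives $C_\sigma\epsilon^{3/2}(1+t)^N e^{t\sigma}/e^t$, which is integrable.

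For the complementary regime $|\gamma|>N-3$, the constraint $k+|\gamma|+|\beta|\leq|\alpha|+1\leq N+1$ forces $k\leq 3$ and $|\beta|\leq N-2$, so every factor of $P(\varphi)$ has order at most $3\leq N-4$ and is bounded by \ref{boot2} as $|P^{\alpha i}_{d\gamma\beta}(\varphi)|\lesssim\epsilon^{d/2}(1+t)^N$. Then Lemma \ref{lemma_modified_weighted_Sobolev_Green_function} applies (its hypotheses $|\gamma|\leq N$, $|\beta|\leq N-2$ are met) and yields $\|\partial_{x^i}Z^\gamma(\phi)Y^\beta f\|_{L^1_{x,v}}\lesssim\epsilon/e^t$, producing the integrable bound $\epsilon^{d/2+1}(1+t)^N/e^t$. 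Choosing $\sigma$ sufficiently small that $1-2\sigma>0$, both regimes give integrands dominated by $C_\sigma\epsilon^{3/2}e^{-t(1-2\sigma)}$ (up to harmless polynomial factors), whence
\[
\|Y^{\alpha}f(t)\|_{L^1_{x,v}} \leq \epsilon + C_\sigma\epsilon^{3/2}.
\]
Summing over $|\alpha|\leq N$ and choosing $\epsilon_0$ so that $C_\sigma\epsilon^{1/2}\leq 1/(2\#\{\alpha\})$ delivers $\mathcal{E}^m_N[f(t)]\leq\tfrac{3}{2}\epsilon$.

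The principal obstacle is the bookkeeping in the first regime: one must verify, using only the signature inequalities $k\leq|\alpha|-1$ and $k+|\gamma|+|\beta|\leq|\alpha|+1$, that whenever a factor $Y^{\alpha_{j_0}}\varphi$ exceeds order $N-4$, the pairing $(Y^{\alpha_{j_0}}\varphi,Y^\beta f)$ satisfies the exact index assumptions of Lemma \ref{lem_main_estimate_2d}, and the residual factors in $P$ actually drop into the range where \ref{boot2} is available. Once this combinatorial step is carried out, the loss $e^{t\sigma}$ from Lemma \ref{lem_main_estimate_2d} is absorbed by the gain $e^{-t}$ coming either from \ref{boot4} or from Lemma \ref{lemma_modified_weighted_Sobolev_Green_function}, and the time integration closes the bootstrap for~\ref{boot1}.
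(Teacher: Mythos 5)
Your overall architecture matches the paper's proof exactly: Duhamel energy identity in $L^1_{x,v}$, using $Y^i|_{t=0}=Z^i$ to reduce to the initial energy, the commutation formula of Lemma~\ref{lemma_commutation_formula_modified}, and a split according to whether $|\gamma|\leq N-3$ or $|\gamma|>N-3$. In the regime $|\gamma|\leq N-3$ your bookkeeping is correct and is in fact more explicit than the paper's: you correctly observe that, because $k\le|\alpha|-1\le N-1$ and $k+|\beta|\le|\alpha|+1\le N+1$, at most one factor $Y^{\alpha_{j_0}}\varphi$ in the multilinear form can have order $>N-4$, and you pair it with $Y^\beta f$ through Lemma~\ref{lem_main_estimate_2d} while estimating the remaining factors with~\ref{boot2}; this is exactly what the paper's one-line invocation of Lemma~\ref{lem_main_estimate_2d} is silently packaging.

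The gap is in the regime $|\gamma|>N-3$. You write that Lemma~\ref{lemma_modified_weighted_Sobolev_Green_function} ``yields $\|\partial_{x^i}Z^\gamma(\phi)Y^\beta f\|_{L^1_{x,v}}\lesssim\epsilon/e^t$,'' but that lemma actually gives
$$\|\nabla_x Z^{\gamma}(\phi)\,Y^{\beta}f\|_{L^1_{x,v}}\lesssim \frac{\epsilon}{e^t}\sum_{|\eta|\leq |\gamma|}\|\rho(Z^{\eta}f)\|_{L^1_x},$$
with the sum over \emph{unmodified} derivatives $Z^\eta$ of $f$ still present. Since the bootstrap assumption~\ref{boot1} controls $\|Y^\eta f\|_{L^1_{x,v}}$ and not $\|Z^\eta f\|_{L^1_{x,v}}$, this sum is not $O(1)$ a priori. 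The paper closes this by first applying Lemma~\ref{lemma_regular_vf_into_modif_vf} to write $Z^\eta f$ as $\sum P^{\eta}_{d\eta'}(\varphi)Y^{\eta'}f$ and then invoking Lemma~\ref{lem_main_estimate_2d} again to obtain $\|Z^\eta f\|_{L^1_{x,v}}\lesssim(1+t)^N e^{t\sigma}\epsilon$. This introduces additional polynomial and $e^{t\sigma}$ growth, which is then absorbed by the $e^{-t}$ factor, giving a final bound of the form $\epsilon^2 e^{-t\sigma'}$ for some $\sigma'>0$. Without this conversion from $Z$ to $Y$ the estimate you claim is unsupported, and so the proof is incomplete at precisely the step where the modified-vector-field bookkeeping is essential.
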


\begin{proof}
By Lemma \ref{lemma_commutation_formula_modified}, for every multi-index $|\alpha|\leq N$, we have $$[\T_{\phi},Y^{\alpha}]f=\sum_{d=0}^{|\alpha|+1} \sum_{i=1}^2 \sum_{|\beta|, |\gamma|\leq |\alpha|} P^{\alpha i}_{d\gamma\beta}(\vphi)\partial_{x^i}Z^{\gamma}(\phi)Y^{\beta}f,$$ where $P^{\alpha i}_{d\gamma\beta}(\vphi)$ are multilinear forms of degree $d$ and signature less than $k$ such that $k\leq |\alpha|-1$ and $k+|\gamma|+|\beta|\leq |\alpha|+1$. When $|\gamma|\leq N-3$, we have $$|\partial_{x^i}Z^{\gamma}(\phi)|\leq \frac{\e^{\frac{1}{2}}}{e^t},$$ by the bootstrap assumptions. By Lemma \ref{lem_main_estimate_2d}, we have $$\|P^{\alpha,i}_{d\gamma\beta}(\vphi)Y^{\beta}(f)\|_{L^1_{x,v}} \lesssim (1+t)^{N+1}e^{t\sigma}\e,$$ since $k+|\beta|\leq N+1$ and $k\leq N-1$. By taking $\sigma>0$ small enough, we have $$\|P^{\alpha,i}_{d\gamma\beta}(\vphi)\partial_{x^i}Z^{\gamma}(\phi)Y^{\beta}(f)\|\lesssim \dfrac{\e^{\frac{3}{2}}}{e^{t\sigma'}},$$ for some $\sigma'>0$ If $|\gamma|>N-3$, we have $$|P^{\alpha,i}_{d\gamma\beta}(\vphi)|\lesssim (1+t)^{N+1},$$ since $k$, $|\beta|\leq N-4$ due to $N\geq 7$. By Lemma \ref{lemma_modified_weighted_Sobolev_Green_function}, we have $$\|\partial_{x^i}Z^{\gamma}(\phi)Y^{\beta}(f)\|_{L^1_{x,v}}\lesssim \frac{\e}{e^t}\sum_{|\eta|\leq |\gamma|} \|Z^{\eta}(f)\|_{L^1_{x,v}}.$$ By Lemma \ref{lemma_regular_vf_into_modif_vf}, we have $$Z^{\eta}(f)=\sum_{d=0}^{|\eta|} \sum_{|\eta'|\leq |\eta|} P^{\eta}_{d\eta'}(\vphi)Y^{\eta'},$$ where $P^{\eta}_{d\eta'}(\vphi)$ are multilinear forms of degree $d$ and signature less than $k$ with $k\leq |\eta|-1\leq N-1$ and $k+|\eta'|\leq |\eta|\leq N.$ By Lemma \ref{lem_main_estimate_2d}, we have $$\|Z^{\eta}(f)\|_{L^1_{x,v}}\lesssim (1+t)^N e^{t\sigma}\e,$$ which implies the existence of $\sigma'>0$ such that $$\|P^{\alpha,i}_{d\gamma\beta}(\vphi)\partial_{x^i}Z^{\gamma}(\phi)Y^{\beta}(f)\|\lesssim \dfrac{\e^{2}}{e^{t\sigma'}}.$$ Thus, there exists $\sigma'>0$ such that $$\|\T_{\phi}Y^{\alpha}(f)\|_{L^1_{x,v}}\lesssim \frac{\e^{\frac{3}{2}}}{e^{t\sigma'}}.$$ As a result, we obtain $$\E^m_N[f(t)]\leq \E^m_N[f_0]+\sum_{|\alpha|\leq N}\int_0^t \|\T_{\phi}Y^{\alpha}(f)\|_{L^1_{x,v}}\leq \e+C\e^{\frac{3}{2}}\int_0^{\infty}\frac{ds}{e^{s\sigma'}}\leq \frac{3}{2}\e,$$ when $\e>0$ is small enough.
\end{proof}

\begin{lemma}
For every multi-index $|\alpha|\leq N-3$, we have $$|\nabla_x Z^{\alpha}\phi(t,x)|\leq \dfrac{\e}{e^t}.$$
\end{lemma}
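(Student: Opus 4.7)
The plan is to represent $\nabla_x Z^{\alpha}\phi$ via the Green function for the two-dimensional Poisson equation and convert the resulting convolution into a bound on $\rho(f)$ that is pointwise sharp in both space and time. By Lemma \ref{lemma_commuted_poisson_equation}, $\Delta Z^{\alpha}\phi=\sum_{|\beta|\le|\alpha|}C^{\alpha}_{\beta}Z^{\beta}\rho(f)$, so in $\R^2$ we have
\begin{equation*}
\nabla_x Z^{\alpha}\phi(t,x)=\sum_{|\beta|\le|\alpha|}C^{\alpha}_{\beta}\int_{\R^2}\frac{1}{2\pi}\frac{x-y}{|x-y|^{2}}\,Z^{\beta}\rho(f)(t,y)\,dy,
\end{equation*}
hence $|\nabla_x Z^{\alpha}\phi(t,x)|\lesssim\sum_{|\beta|\le|\alpha|}\int_{\R^2}\frac{1}{|y|}\,|Z^{\beta}\rho(f)|(t,x-y)\,dy$. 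Thus the lemma reduces to a pointwise estimate on $|Z^{\beta}\rho(f)|$ for $|\beta|\le N-3$, combined with the two-dimensional version of the scaling identity \eqref{remark_uniform_integral_bound_kernel_convolution_duan}, which gives $\int_{\R^2}\frac{dy}{|y|(e^t+|z+y|)^2}\lesssim\frac{1}{e^t}$ uniformly in $z$.

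Next, I would use Lemma \ref{lemma_spatial_density_non_modified_vs_into_modified} to rewrite $Z^{\beta}\rho(f)$ as
\begin{equation*}
Z^{\beta}\rho(f)=\sum_{d,|\beta'|\le|\beta|}\rho\!\left(Q^{\beta}_{d\beta'}(\partial_x\vphi)Y^{\beta'}f\right)+\sum_{j\ge 1}\sum_{d,|\beta'|\le|\beta|}\frac{1}{e^{2jt}}\rho\!\left(P^{\beta j}_{d\beta'}(\vphi)Y^{\beta'}f\right).
\end{equation*}
The multilinear factors $Q^{\beta}_{d\beta'}(\partial_x\vphi)$ involve at most $|\beta|\le N-3\le N-5$ modified derivatives of $\vphi$ (accounting for the signature constraint $k'+d+|\beta'|\le|\beta|$), so bootstrap assumption \ref{boot3} gives $|Q^{\beta}_{d\beta'}(\partial_x\vphi)|\lesssim 1$; similarly the polynomial-in-$t$ growth of $P^{\beta j}_{d\beta'}(\vphi)$ coming from \ref{boot2} is dominated by the damping factor $e^{-2jt}$. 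Applying the modified weighted Sobolev inequality together with the improved energy bound $\E^m_N[f(t)]\le\tfrac{3}{2}\e$ from the previous lemma yields $\rho(|Y^{\beta'}f|)(t,z)\lesssim\frac{\e}{(e^t+|z|)^2}$ whenever $|\beta'|+2\le N$, which holds because $|\beta'|\le|\beta|\le N-3$. Combining these ingredients, we obtain the pointwise bound $|Z^{\beta}\rho(f)(t,z)|\lesssim\frac{\e}{(e^t+|z|)^2}$.

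Substituting this into the convolution representation, and using the scaling computation above, we conclude
\begin{equation*}
|\nabla_x Z^{\alpha}\phi(t,x)|\lesssim \e\int_{\R^2}\frac{1}{|y|\,(e^t+|x-y|)^{2}}\,dy\lesssim\frac{\e}{e^t},
\end{equation*}
and the pointwise bound $|\nabla_x Z^{\alpha}\phi(t,x)|\le\frac{\e}{e^t}$ in the statement follows by taking $\e$ sufficiently small to absorb the implicit constant (simultaneously with the threshold fixed by the bootstrap improvement of $\E^m_N$).

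The main subtlety is bookkeeping the signatures in Lemma \ref{lemma_spatial_density_non_modified_vs_into_modified}: one must check that for $|\beta|\le N-3$ every factor of $\vphi$ or $\partial_x\vphi$ produced by the conversion really falls within the regularity ranges of \ref{boot2}--\ref{boot3}, and that the residual highest-order term $Y^{\beta'}f$ satisfies $|\beta'|\le N-3$ so that the modified weighted Sobolev inequality (valid for $|\beta'|+2\le N$) may be invoked. The hypothesis $N\ge 7$ in Theorem \ref{theorem_stability_vacuum_external_potential_2D} is precisely what makes this hierarchy of indices consistent; everything else is a routine application of the machinery already developed in this section.
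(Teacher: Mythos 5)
Your overall skeleton is right and matches the paper's: Green function representation of $\nabla_x Z^{\alpha}\phi$, rewriting $\rho(Z^{\alpha}f)$ via Lemma \ref{lemma_spatial_density_non_modified_vs_into_modified}, and applying the modified weighted Sobolev inequality to get the $(e^t+|x|)^{-2}$ weight. But the way you estimate the multilinear coefficients is where the argument breaks. You assert that \ref{boot3} gives $|Q^{\beta}_{d\beta'}(\partial_x\vphi)|\lesssim 1$ because the relevant derivative orders lie below $N-5$; the arithmetic "$|\beta|\le N-3\le N-5$" is false, and that is not a typo but a genuine index error. Lemma \ref{lemma_spatial_density_non_modified_vs_into_modified} only bounds the signature of $Q^{\beta}_{d\beta'}(\partial_x\vphi)$ by $k'\le |\beta|-1\le N-4$, and the boundary case $k'=N-4$, $d=1$, $|\beta'|=0$ produces a single factor $Y^{\alpha_1}(\partial_x\vphi)$ with $|\alpha_1|=N-4$, which sits strictly outside the range $|\alpha'|\le N-5$ of \ref{boot3}. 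This factor therefore admits no pointwise bound from the bootstrap assumptions, so your plan of factoring $Q$ out of the velocity average and applying the Sobolev inequality only to $\rho(|Y^{\beta'}f|)$ cannot close. The same issue arises for the $P^{\beta j}_{d\beta'}(\vphi)$ terms, whose signature can reach $N-3$ while \ref{boot2} only covers orders up to $N-4$.

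The paper circumvents this precisely by \emph{not} separating the coefficient from $Y^{\beta'}f$ before applying Sobolev. It applies the modified weighted Sobolev inequality to the full product $Q^{\alpha}_{d\beta}(\partial_x\vphi)Y^{\beta}f$, which turns the pointwise problem into $L^1_{x,v}$ estimates of $Y^{\eta}\bigl[Q^{\alpha}_{d\beta}(\partial_x\vphi)Y^{\beta}f\bigr]$ with $|\eta|\le 2$. After Leibniz, the constraint $N\ge 7$ guarantees at most one factor exceeds the bootstrap thresholds, and that single top-order factor paired with the $f$-factor is controlled in $L^1$ (not $L^\infty$) by Lemma \ref{lem_main_estimate_2d}. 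Your proposal never invokes Lemma \ref{lem_main_estimate_2d}, which is the indispensable tool here: it supplies exactly the $L^1$ control of $Y^{\alpha}(\partial_x\vphi)Y^{\beta}f$ and $Y^{\alpha}(\vphi)Y^{\beta}f$ in the regime where pointwise bootstrap bounds on $\vphi$ and $\nabla\vphi$ are unavailable. To repair the proof you would need to follow the paper's order of operations (Sobolev first, split the product second) and insert Lemma \ref{lem_main_estimate_2d} for the high-order coefficient terms.
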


\begin{proof}
The proof follows the same strategy than the proof of Lemma \ref{lemma_decay_bilinear_terms_commuted_vlasov}. Using the Green function for the Poisson equation, we estimate the gradient $\nabla_xZ^{\gamma}\phi$ by $$|\nabla_xZ^{\gamma}\phi|(t,x)\lesssim \sum_{|\gamma'|\leq |\gamma|}\int_{\R^n} \dfrac{1}{|y|^{n-1}}\rho(|Z^{\gamma'}f|)(x-y)dy.$$ By Lemma \ref{lemma_spatial_density_non_modified_vs_into_modified}, we have $$\rho(Z^{\alpha}f)=\sum_{d=0}^{|\alpha|}\sum_{|\beta|\leq |\alpha|}\rho(Q_{d\beta}^{\alpha}(\partial_x\vphi)Y^{\beta}f)+\sum_{j=1}^{|\alpha|}\sum_{d=1}^{|\alpha|+1}\sum_{|\beta|\leq |\alpha|}\dfrac{1}{e^{2jt}}\rho(P_{d\beta}^{\alpha j}(\vphi)Y^{\beta}f),$$ where $Q_{d\beta}^{\alpha}(\partial_x\vphi)$ are multilinear forms with respect to $\partial_x\vphi$ of degree $d$ and signature less than $k'$ such that $k'\leq |\alpha|-1\leq N-4$ and $k'+d+|\beta|\leq |\alpha|\leq N-3$, and  $P_{d\beta}^{\alpha j}(\vphi)$ are multilinear forms of degree $d$ and signature less than $k$ such that $k\leq |\alpha|\leq N-3$ and $k+|\beta|\leq |\alpha|\leq N-3.$ Applying the weighted Sobolev inequality to every term in the above equation, we have
\begin{align*}
|\rho(Q_{d\beta}^{\alpha}(\partial_x\vphi)Y^{\beta}f)(x-y)|&\lesssim \dfrac{1}{(e^t+|x-y|)^2}\sum_{|\eta|\leq 2}\|Y^{\eta}[Q_{d\beta}^{\alpha}(\partial_x\vphi)Y^{\beta}f]\|_{L^1_{x,v}},\\
|\rho(P_{d\beta}^{\alpha j}(\vphi)Y^{\beta}(f))(x-y)|&\lesssim \dfrac{1}{(e^t+|x-y|)^2}\sum_{|\eta|\leq 2}\|Y^{\eta}[P_{d\beta}^{\alpha j}(\vphi)Y^{\beta}f]\|_{L^1_{x,v}}.
\end{align*}
Since $N\geq 7$, there is at most one term $Y^{\eta'}(\vphi)$ with $|\eta'|>N-4$. By the bootstrap assumption and Lemma \ref{lem_main_estimate_2d}, we obtain $$|\rho(Z^{\alpha}f)(x-y)|\lesssim \dfrac{\e}{(e^t+|x-y|)^2}+\dfrac{\e(1+t)^N}{(e^t+|x-y|)^2 e^{t}}\lesssim \dfrac{\e}{(e^t+|x-y|)^2}.$$ By Lemma \ref{lemma_uniform_integral_bound_kernel_convolution_duan}, we have $$|\nabla_xZ^{\gamma}\phi|(t,x)\lesssim \frac{\e}{e^t}.$$
\end{proof}

\begin{lemma}
For every multi-index $\alpha$ with $|\alpha|\leq N-4$, we have $$|Y^{\alpha}\vphi(t,x,v)|\leq \e(1+t).$$ Moreover, for every multi-index $\alpha$ with $|\alpha|\leq N-5$, we have $$|Y^{\alpha}\nabla\vphi(t,x,v)|\leq \e.$$
\end{lemma}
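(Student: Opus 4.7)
The overall strategy is to integrate the defining transport equation of $\vphi$ along the characteristic flow of $\T_\phi$, exploiting the initial condition $\vphi(0, x, v) = 0$. Since $\vphi|_{t=0}=0$, one also has $Y^\alpha \vphi|_{t=0} = 0$ for any multi-index $\alpha$ (each modified vector field reduces to an element of $\lambda$ when $\vphi=0$), so that
\begin{equation*}
Y^\alpha \vphi(t, x, v) = \int_0^t \T_\phi(Y^\alpha \vphi)(s, X(s), V(s))\, ds,
\end{equation*}
where $(X(s), V(s))$ is the backward characteristic of $\T_\phi$ issuing from $(t, x, v)$. The problem then reduces to pointwise bounds on $|\T_\phi(Y^\alpha \vphi)|$ and on $|\T_\phi(Y^\alpha \nabla \vphi)|$ with, respectively, linearly growing and uniform primitives in $t$.

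For the first estimate, I would write $\T_\phi(Y^\alpha \vphi) = Y^\alpha \T_\phi(\vphi) + [\T_\phi, Y^\alpha]\vphi$ and treat each piece separately. The ``genuine'' source is $Y^\alpha \T_\phi(\vphi) = -\tfrac{\mu}{2} e^t\, Y^\alpha \partial_{x^k}(Z^i \phi + c_i \phi)$. Invoking Lemma \ref{lemma_identity_modified_derivative_grav_field_into_regular_fields} to exchange modified for regular vector fields at the cost of multilinear forms in $\vphi$ weighted by $e^{-2t}$, and then using the just-proven improvement $|\nabla_x Z^\gamma \phi| \le \e/e^t$ for $|\gamma| \le N - 3$ (available because $|\alpha| + 1 \le N - 3$), this source is bounded uniformly by a constant times $\e$. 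For the commutator, Lemma \ref{lemma_commutation_formula_modified} expresses $[\T_\phi, Y^\alpha]\vphi$ as a sum of $P^{\alpha i}_{d \gamma \beta}(\vphi)\, \partial_{x^i} Z^\gamma(\phi)\, Y^\beta(\vphi)$, with $k \le |\alpha| - 1$ and $k + |\gamma| + |\beta| \le |\alpha| + 1$. Because $|\alpha| \le N - 4$ and $N \ge 7$, in each product at most one $Y^{\alpha'}\vphi$-factor can have $|\alpha'| > N - 4$; the remaining factors in $P$ and in $Y^\beta \vphi$ are controlled by the bootstrap assumption \ref{boot2} and thus bounded by $(1+t)^N \e^{1/2}$, while $|\partial_{x^i} Z^\gamma \phi| \le \e^{1/2}/e^t$ by \ref{boot4}. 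Provided $\e$ is small enough, the product is bounded by $C \e/e^{t \sigma'}$ for some $\sigma' > 0$ absorbing the polynomial growth, and integration over $[0, t]$ delivers $|Y^\alpha \vphi(t, x, v)| \le \e(1 + t)$.

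For the second estimate, I would apply the same ``commute-and-integrate'' scheme to $Y^\alpha \partial_{x^j} \vphi$ (the $\partial_v$-derivatives being then recovered from the identity $\partial_{v^j} = e^{-t} U_j - \partial_{x^j}$ together with the first part of the lemma). The crucial gain is the extra decay of the source: differentiating once more in space, $\partial_{x^j} Y^\alpha \T_\phi(\vphi)$ produces schematically $e^t \partial_{x^j}\partial_{x^k} Z^i \phi$, which is bounded by $\e/e^t$ using the pointwise second-derivative estimate $|\partial_x^2 \phi| \lesssim \e/e^{2t}$ coming from the Poisson equation together with the improved spatial decay of $\rho(f)$ and its derivatives (itself encoded in \ref{boot4} applied to $U^x_l \partial_{x^k}\phi$). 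The commutator $[\T_\phi, Y^\alpha \partial_{x^j}] \vphi$ inherits one factor of $\partial_{x^j}$, again gaining an extra $\e/e^t$, and the extra terms from $[\partial_{x^j}, \T_\phi]\vphi = \partial_{v^j}\vphi - \mu \partial_{x^j}\nabla_x \phi \cdot \nabla_v \vphi$ are rewritten through the modified vector fields using $U_j = Y_{U_j} + \sum_k \vphi_j^k S_k$ and controlled via the first part of the present lemma and \ref{boot3}. Every source is then integrable in time, of order $C \e/e^{t \sigma'}$, yielding $|Y^\alpha \nabla \vphi(t, x, v)| \le \e$ after integration on $[0, t]$.

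The main technical difficulty is purely combinatorial: in each commutator expansion the multilinear form $P^{\alpha i}_{d \gamma \beta}(\vphi)$ can stack several derivatives of $\vphi$, and one has to verify that in each product at most one factor sits at top order while all others are low enough to fall under the bootstrap assumptions \ref{boot2}--\ref{boot3}. The assumption $N \ge 7$ is precisely what ensures this dichotomy, and it is also what creates the one-derivative loss $N-4 \leadsto N-5$ when passing from $\vphi$ to $\nabla \vphi$.
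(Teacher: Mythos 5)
Your proof takes essentially the same route as the paper's: integrate $\T_\phi(Y^\alpha\vphi)$ along characteristics starting from $\vphi|_{t=0}=0$, decompose via the commutation formula (Lemma \ref{lemma_commutation_formula_modified}), exchange modified for regular vector fields applied to $\nabla\phi$ (Lemma \ref{lemma_identity_modified_derivative_grav_field_into_regular_fields}), and close using the improved bound $|\nabla_x Z^\eta\phi|\le\e/e^t$ together with the bootstrap assumptions \ref{boot2}--\ref{boot4}. For the first estimate this is a close match: both arrive at a uniformly $O(\e)$ integrand and integrate to $\e(1+t)$. Your combinatorial remark about ``at most one top-order factor'' is harmless but not actually needed here, since $k\le |\alpha|-1\le N-5$ and $|\beta|\le |\alpha|\le N-4$ already place every $\vphi$-factor in the range covered by \ref{boot2}.

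For the derivative estimate you and the paper share the key observation that the extra $\partial_{x^j}$ acting on the Poisson field yields an extra $e^{-t}$ via $\partial_{x^i}\partial_{x^j}Z^\eta\phi = e^{-t}\partial_{x^i}\bigl(e^t\partial_{x^j}\bigr)Z^\eta\phi$. But you also flag, as the paper's very terse proof does not, the extra commutator $[\T_\phi,\partial_{x^j}] = -\partial_{v^j}+\mu\,\partial_{x^j}\nabla_x\phi\cdot\nabla_v$, and your proposed handling of the $-\partial_{v^j}\vphi$ contribution does not close. After rewriting $\partial_{v^j}=e^{-t}U_j-\partial_{x^j}$, the $e^{-t}U_j\vphi$ piece is indeed fine, but the $-\partial_{x^j}\vphi$ piece reproduces exactly the quantity being estimated, and \ref{boot3} gives $Y^\alpha\partial_{x^j}\vphi$ only as $O(\e^{1/2})$, not $O(\e/e^t)$. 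Your assertion that ``every source is then integrable in time, of order $C\e/e^{t\sigma'}$'' therefore fails for this term: integrating $O(\e^{1/2})$ produces $O(\e^{1/2}(1+t))$, not the claimed $O(\e)$, and the resulting transport inequality has a linear feedback that is not controlled by the argument as written. To be fair, the paper's own one-line proof (``Replacing $Y^\alpha$ by $Y^\alpha\partial_x$'') omits $[\T_\phi,\partial_{x^j}]\vphi$ entirely, so by spelling this step out you have surfaced a real subtlety; but the proposal would need a genuine additional idea (for instance tracking the coupled pair $\bigl(Y^\alpha\partial_{x^j}\vphi, Y^\alpha\partial_{v^j}\vphi\bigr)$ and exploiting the structure of the induced $2\times 2$ system) rather than a direct appeal to \ref{boot3} at this point.
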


\begin{proof}
Integrating along the characteristics, we have $$|Y^{\alpha}\vphi(t,x,v)|\leq \int_0^t \|\T_{\phi}Y^{\alpha}(\vphi)(s)\|_{L^{\infty}_{x,v}}ds.$$ We estimate the two terms of the decomposition $$\T_{\phi}Y^{\alpha}(\vphi)=Y^{\alpha}\T_{\phi}(\vphi)+[\T_{\phi},Y^{\alpha}](\vphi).$$ Using the equation that defines the coefficient $\vphi$, we have $$Y^{\alpha}\T_{\phi}(\vphi)=e^t\sum_{|\eta|\leq |\alpha|+1}c_{\eta,i}\partial_{x^i}Z^{\eta}(\phi)+\sum_{d=1}^{|\alpha|}\sum_{|\eta|\leq |\alpha|+1}P^{\alpha}_{d\eta}(\vphi)\partial_{x^i}Z^{\eta}(\phi),$$ where $P^{\alpha}_{d\eta}(\vphi)$ are multi-linear forms of degree $d$ with signatures less than $k$ such that $k\leq |\alpha|\leq N-4$ and $k+|\eta|\leq |\alpha|+1\leq N-3$. By the bootstrap assumptions and the improved estimates for $\partial_{x^i}Z^{\eta}(\phi)$, we have $$|Y^{\alpha}\T_{\phi}(\vphi)|(t)\lesssim \e+\dfrac{(1+t)^{N+1}}{e^t}\lesssim \e.$$ The commutator $[\T_{\phi},Y^{\alpha}](\vphi)$ is treated using Lemma \ref{lemma_commutation_formula_modified} from where $$[\T_{\phi},Y^{\alpha}]=\sum_{d=0}^{|\alpha|+1} \sum_{i=1}^2 \sum_{|\beta|, |\gamma|\leq |\alpha|} P^{\alpha i}_{d\gamma\beta}(\vphi)\partial_{x^i}Z^{\gamma}(\phi)Y^{\beta},$$ where then multilinear form $P^{\alpha i}_{d\gamma\beta}(\vphi)$ has degree $d$ and signature less than $k$ with $k\leq |\alpha|-1\leq N-5$ and $k+|\gamma|+|\beta|\leq |\alpha|+1\leq N-3$. By the bootstrap assumptions, we have $$|[\T_{\phi},Y^{\alpha}](\vphi)|\lesssim\e \dfrac{1+(1+t)^{N+1}}{e^t}\lesssim \e.$$ Putting the previous estimates together, we have $$|Y^{\alpha}\vphi(t,x,v)|
\lesssim \int_0^t \e ds\lesssim \e(1+t).$$ Replacing the differential operator $Y^{\alpha}$ by $Y^{\alpha}\partial_x$ in the previous estimates, the term $\partial_{x^i}Z^{\eta}(\phi)$ is replaced by $\partial_{x^i}\partial_{x^j}Z^{\eta}(\phi)$ which provides additional decay since $e^{-t}\partial_{x^i}(e^t\partial_{x^j})Z^{\eta}(\phi)$. As a result, we obtain $$|Y^{\alpha}\nabla_x\vphi (t,x,v)|\lesssim \int_0^t \dfrac{\e}{e^s}ds\lesssim \e.$$
\end{proof}

In summary, we have improved the bootstrap assumptions \ref{boot1}-\ref{boot4}, and therefore the proof of Theorem \ref{theorem_stability_vacuum_external_potential_2D} is completed.

\section{The trapped set of the characteristic flow}\label{section_proof_trapped_set_characteristic_flow}

In this section, we study the trapped set $\Gamma_+$ of the characteristic flow induced by the small data solutions of \eqref{vlasov_poisson_unstable_trapping_potential_paper} that we studied in the previous sections. We give an explicit characterization of $\Gamma_+$, which coincides with the stable manifold at the origin $W^s(0,0)$.

\subsection{Properties of the trapped set} 

Let $f$ be a small data solution to the Vlasov--Poisson system with the potential $\frac{-|x|^2}{2}$, according to the assumptions in Theorem \ref{theorem_stability_vacuum_external_potential} or Theorem \ref{theorem_stability_vacuum_external_potential_2D}. Let us describe the trapped set of the particle system determined by the characteristic flow
\begin{equation}\label{characteristics_NL_system_section_trap}
\frac{d}{dt}X(t,x,v)=V(t,x,v),\qquad \frac{d}{dt}V(t,x,v)=X(t,x,v)-\mu\nabla_x\phi (t,X(t,x,v)).
\end{equation}

We have shown that for every small data solution of the system, the force field $\nabla_x\phi$ decays exponentially in time. In particular, the origin $\{x=0,v=0\}$ is formally a fixed point of \eqref{characteristics_NL_system_section_trap} when $t\to \infty$. We define the set $$W^s(0,0):=\Big\{(x,v)\in \R^n_x\times\R^n_v: (X(t,x,v),V(t,x,v))\to (0,0) \text{   as   } t\to \infty\Big\}.$$

\begin{proposition}\label{prop_stable_mfld_exist_and_propert}
The set $W^s(0,0)$ is an $n$-dimensional invariant manifold of class $C^{N-n-1}$. Moreover, the set $W^s(0,0)$ is characterized as 
\begin{equation}\label{characterization_stable_manifold}
W^s(0,0)=\Big\{(x,v): x+v=\int_0^{\infty}\frac{1}{e^{t'}}\mu\nabla_x\phi(t',X(t',x,v))dt' \Big\}.
\end{equation}
We call $W^s(0,0)$ the \emph{stable manifold of the origin}.
\end{proposition}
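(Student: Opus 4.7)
The plan is to exploit the decoupling at the linear level: writing $A(t):=X(t)+V(t)$ and $B(t):=X(t)-V(t)$, the characteristic system \eqref{characteristics_NL_system_section_trap} becomes
\begin{align*}
\dot A &= A - \mu\nabla_x\phi(t,X(t)),\\
\dot B &= -B + \mu\nabla_x\phi(t,X(t)),
\end{align*}
and integration with factors $e^{-t}$, $e^{t}$ gives
\begin{align*}
A(t) &= e^{t}\Bigl[A(0) - \int_0^t e^{-s}\mu\nabla_x\phi(s,X(s))\,ds\Bigr],\\
B(t) &= e^{-t}\Bigl[B(0) + \int_0^t e^{s}\mu\nabla_x\phi(s,X(s))\,ds\Bigr].
\end{align*}
By Theorems \ref{theorem_stability_vacuum_external_potential}--\ref{theorem_stability_vacuum_external_potential_2D} combined with the Newtonian representation used in Lemma \ref{lemma_decay_bilinear_terms_commuted_vlasov}, one has the pointwise bound $|\nabla_x\phi(s,x)|\lesssim \e\,e^{-(n-1)s}$ uniformly in $x$; in particular $|\nabla_x\phi|\lesssim \e\,e^{-s}$ in every dimension $n\ge 2$, so $\int_0^\infty e^{-s}\nabla_x\phi(s,X(s))\,ds$ converges absolutely and is $O(\e)$.

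Step 1 (characterization \eqref{characterization_stable_manifold}). The $B$-identity gives $|B(t)|\lesssim e^{-t}|B(0)| + (1+t)\,\e\,e^{-t}$, so $B(t)\to 0$ unconditionally. The $A$-identity shows that $A(t)\to 0$ iff the bracket is $o(e^{-t})$. Splitting $\int_0^t = \int_0^\infty - \int_t^\infty$ and using that the tail satisfies $|\int_t^\infty e^{-s}\nabla_x\phi\,ds|\lesssim \e\,e^{-2t}=o(e^{-t})$, this is equivalent to the fixed-point relation
$$A(0)=\int_0^\infty e^{-s}\mu\nabla_x\phi(s,X(s;x,v))\,ds,$$
which is exactly \eqref{characterization_stable_manifold}, and a posteriori yields $|A(t)|\lesssim \e\,e^{-t}$, so the full trajectory decays exponentially to $(0,0)$.

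Step 2 (nonempty $n$-dimensional graph). To produce solutions to the fixed-point relation, I would parametrise by $b:=x-v\in\R^n$ and, for $b$ in a ball of radius $O(\e)$, define
$$\Psi_b(a):=\int_0^\infty e^{-s}\mu\nabla_x\phi\bigl(s,X(s;\tfrac{a+b}{2},\tfrac{a-b}{2})\bigr)\,ds$$
on a ball $\{|a|\le C\e\}\subset\R^n$. The pointwise bound $|\nabla_x\phi|\lesssim \e\,e^{-s}$ makes $\Psi_b$ self-map such a ball. Differentiating under the integral, $\partial_a\Psi_b$ is controlled by $|\nabla_x^2\phi(s,X(s))|\cdot|\partial_a X(s)|$; the first factor is again $\lesssim \e\,e^{-(n-1)s}$ (same Newtonian argument applied to one extra spatial derivative of $\rho$, using Proposition \ref{proposition_improved_decay_spatial_density}), while the second grows at most like $e^{s}$ by Gronwall on the variational system. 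The resulting contraction constant is $O(\e)$, giving a unique fixed point $a=a(b)$, hence a nonempty $n$-dimensional graph, and the bound $|a(b)|=|x+v|\lesssim \e$ claimed in Theorem \ref{thm_characterization_trapped_set}.

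Step 3 (regularity and invariance). Each additional derivative of $\Psi_b$ costs one $e^{s}$-factor from differentiating the flow, compensated by the corresponding higher-order improved decay of $\nabla_x^k\phi$ along trajectories (Proposition \ref{proposition_improved_decay_spatial_density} and its 2D counterpart). Since $\nabla_x\phi$ is of class $C^{N-n}$ in $x$, iterating this argument via the implicit function theorem yields $a(\cdot)\in C^{N-n-1}$, the single lost derivative being needed to keep the top-order variational integrand exponentially integrable. Invariance is immediate from the definition of $W^s(0,0)$: the trajectory issued from $(X(t_0,x,v),V(t_0,x,v))$ at time $t_0$ is the restriction of the original characteristic and therefore still converges to $(0,0)$.

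The main obstacle. The crux is the balance in Step 2: the unstable direction of the linearised flow forces $|\partial_{(x,v)}X(s)|\sim e^{s}$, which can only be overcome by the exponential gain from $|\nabla_x^2\phi|\lesssim \e\,e^{-s}$. This budget is tight, especially in dimension two where the decay of $\nabla_x\phi$ is merely $e^{-s}$; making the same balance work for higher-order derivatives of $\Psi_b$ is exactly what forces the regularity class to drop from $C^{N-n}$ to $C^{N-n-1}$.
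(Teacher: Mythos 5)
Your proposal is correct in its essential structure, but it takes a genuinely different route from the paper for the existence and manifold structure, and it contains a couple of minor imprecisions.

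Your Step 1 coincides exactly with the paper's ``Characterization'' step: both integrate the characteristic flow in the $(x+v, x-v)$ variables and use the exponential decay of $\nabla_x\phi$ to identify the necessary and sufficient condition for trapping.

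Where you diverge is Steps 2--3. The paper proves nonemptiness separately via the intermediate value theorem (showing the sets $A_i$, $B_i$ on either side of the level set are nonempty, which is cheap but non-constructive), and then obtains the $n$-dimensional $C^{N-n-1}$ structure by treating $W^s(0,0)$ directly as the zero set $\{\Psi(x,v)=0\}$ with $\Psi(x,v)=x+v-\Phi(x,v)$ and applying the implicit function theorem, after establishing the key bounds $|\partial_{x,v}^\alpha X(t)|\le (1+2\e^{1/2})e^t$ (Claims~\ref{claim_estimate_derivative_x_stable_mfld} and~\ref{claim_estimate_higher_derivative_x_stable_mfld}, the latter via the multivariate Fa\`a di Bruno formula). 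You instead construct the graph explicitly: for each $b=x-v$ you produce $a=a(b)$ by a Banach fixed-point argument, and then invoke the implicit function theorem for regularity. This is slightly more constructive and packages nonemptiness and graph structure into one step, at the cost of having to verify the contraction property, which relies on the same variational bounds $|\partial_a X(s)|\lesssim e^s$ that the paper proves via a Gronwall--continuity argument. Both approaches lean on the same cost-versus-gain balance (one $e^s$ from the flow derivative versus one $e^{-s}$ of extra decay from $\nabla_x^2\phi$), so the analytical content is the same; your route is cleaner for producing the global graph, while the paper's route is lighter because IVT is one line.

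Two points to tidy up. First, the restriction ``for $b$ in a ball of radius $O(\e)$'' is both unnecessary and actively misleading: the self-map bound $|\Psi_b(a)|\lesssim\e$ and the contraction bound $|\partial_a\Psi_b|\lesssim\e$ are uniform in $(a,b)\in\R^{2n}$ (they use only pointwise decay of $\nabla_x\phi$ and the flow-derivative estimate, neither of which requires smallness of the initial point), so the fixed point $a(b)$ exists for every $b\in\R^n$ and you get the full $n$-dimensional manifold, not a small piece near the origin. If you keep the restriction you would only establish a neighborhood of $(0,0)$ in $W^s(0,0)$. Second, your one-line explanation for the $C^{N-n-1}$ regularity (``single lost derivative needed to keep the top-order variational integrand exponentially integrable'') is not quite the right accounting: in the paper the loss is traced to how many controlled $Z$-derivatives of $\nabla_x\phi$ survive (bootstrap \ref{boot4} in dimension two gives $|\alpha|\le N-3=N-n-1$, which is tight, while in dimension $n\ge 3$ Lemma~\ref{lemma_decay_bilinear_terms_commuted_vlasov} gives $|\gamma|\le N-n$ with a spare derivative). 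The Fa\`a di Bruno cancellation $e^{-t(1+|\lambda|)}\cdot e^{t|\lambda|}=e^{-t}$ costs nothing once the force-field derivatives are available; the constraint is the availability of those derivatives, not integrability.
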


\begin{proof}
\textbf{Characterization of $W^s(0,0)$.} Integrating the characteristic flow \eqref{characteristics_NL_system_section_trap}, we have 
\begin{align}
X(t,x,v)+V(t,x,v)+e^t\int_0^t \dfrac{1}{e^{t'}}\mu\nabla_x\phi(t',X(t',x,v)) dt'&=e^t(x+v),\label{formula_charact_flow_x_plus_v}\\
X(t,x,v)-V(t,x,v)-e^{-t}\int_0^t e^{t'}\mu\nabla_x\phi (t',X(t',x,v)) dt'&=e^{-t}(x-v)\label{formula_charact_flow_x_minus_v}.
\end{align}
Thus, the characteristic flow $(X(t,x,v),V(t,x,v))$ satisfies 
\begin{align}
X(t,x,v)&=\frac{e^t}{2}\Big(x+v-\int_0^t \dfrac{1}{e^{t'}}\mu\nabla_x\phi(t',X(t',x,v)) dt'\Big)\label{characteristic_flow_x}\\
&\qquad \qquad \qquad+\frac{1}{2e^{t}}\Big(x-v+\int_0^t e^{t'}\mu\nabla_x\phi (t',X(t',x,v)) dt'\Big)\nonumber,\\
V(t,x,v)&=\frac{e^t}{2}\Big(x+v-\int_0^t \dfrac{1}{e^{t'}}\mu\nabla_x\phi(t',X(t',x,v)) dt'\Big) \label{characteristic_flow_v}\\
&\qquad \qquad \qquad -\frac{1}{2e^{t}}\Big(x-v+\frac{1}{2e^{t}}\int_0^t e^{t'}\mu\nabla_x\phi (t',X(t',x,v)) dt'\Big). \nonumber
\end{align}
If the dimension $n\geq 2$, then for every $t_1\geq t_2$, we have 
\begin{align*}
\Big|\int_0^{t_1} \frac{1}{e^{t'}}\mu\nabla_x\phi (t',X(t')) dt'-\int_0^{t_2} \frac{1}{e^{t'}}\mu\nabla_x\phi (t',X(t')) dt'\Big|&\lesssim \Big|\int_{t_2}^{t_1} \frac{1}{e^{t'}}\nabla_x\phi (t',X(t')) dt'\Big|\\
&\lesssim \e^{\frac{1}{2}}\int_{t_2}^{t_1} \frac{dt'}{e^{2t}}\lesssim \frac{\e^{\frac{1}{2}}}{e^{2t_2}}.
\end{align*}
Thus, the limit 
\begin{equation}\label{limits_well_defined_stable_mfld_proof}
\int_0^{\infty} \frac{1}{e^{t'}}\mu\nabla_x\phi (t',X(t')) dt',
\end{equation}
is a well-defined real value such that 
\begin{equation}\label{smallness_limits_well_defined_stable_mfld}
\Big| \int_0^{\infty} \frac{1}{e^{t'}}\mu\nabla_x\phi (t',X(t')) dt' \Big|\lesssim \e^{\frac{1}{2}}.
\end{equation}
Furthermore, for every $t\geq 0$ we have 
\begin{align*}
\Big|\int_0^{t} e^{t'}\mu\nabla_x\phi (t',X(t')) dt'\Big|&\lesssim \e^{\frac{1}{2}}\int_{0}^{t} dt'\lesssim \e^{\frac{1}{2}}t,
\end{align*}
where we have used the decay in time of the force field $\nabla_x \phi$. By the representation formulae \eqref{characteristic_flow_x} and \eqref{characteristic_flow_v}, we have $$W^s(0,0)=\Big\{(x,v): e^t\Big(x+v-\int_0^{t}\frac{1}{e^{t'}}\mu\nabla_x\phi(t',X(t',x,v))dt'\Big)\to 0 \text{ as } t\to \infty \Big\},$$ so in particular $$W^s(0,0)\subset\Big\{(x,v): x+v=\int_0^{\infty}\frac{1}{e^{t'}}\mu\nabla_x\phi(t',X(t',x,v))dt' \Big\}.$$ Furthermore, if $x+v=\int_0^{\infty}e^{-t'}\mu\nabla_x\phi(t',X(t',x,v))dt'$, then 
\begin{align*}
e^t\Big| x+v-\int_0^{t}\frac{1}{e^{t'}}\mu\nabla_x\phi(t',X(t',x,v))dt'\Big|&=e^t\Big|\int_t^{\infty}\frac{1}{e^{t'}}\nabla_x\phi(t',X(t',x,v))dt'\Big|\\
&\lesssim \e^{\frac{1}{2}} e^t\int_t^{\infty}\frac{dt'}{e^{2t'}}\lesssim \frac{\e^{\frac{1}{2}}}{e^{t}}. 
\end{align*}
Hence, we have $$\Big\{(x,v): x+v=\int_0^{\infty}\frac{1}{e^{t'}}\mu\nabla_x\phi(t',X(t',x,v))dt' \Big\}\subset W^s(0,0),$$ so the equality \eqref{characterization_stable_manifold} holds. 

\textbf{Nonemptiness of $W^s(0,0)$.} By the smallness \eqref{smallness_limits_well_defined_stable_mfld} of the limits \eqref{limits_well_defined_stable_mfld_proof}, the sets $$A_i=\Big\{(x^i,v^i)\in \R_{x^i}\times\R_{v^i}: x^i+v^i>\int_0^{\infty}\frac{1}{e^{t'}}\mu\partial_{x^i}\phi(t',X(t',x,v))dt' \Big\}$$ and $$B_i=\Big\{(x^i,v^i)\in \R_{x^i}\times\R_{v^i}: x^i+v^i<\int_0^{\infty}\frac{1}{e^{t'}}\mu\partial_{x^i}\phi(t',X(t',x,v))dt' \Big\},$$ are clearly non-empty for every $i\in \{1,2,\dots, n\}$. By the intermediate value theorem, the sets $$W^s_{i}(0,0)=\Big\{(x^i,v^i)\in \R_{x^i}\times\R_{v^i}: x^i+v^i=\int_0^{\infty}\frac{1}{e^{t'}}\mu\partial_{x^i}\phi(t',X(t',x,v))dt' \Big\}$$ are non-empty for every $i\in \{1,2,\dots, n\}$.

\textbf{Invariance of $W^s(0,0)$.} If $(x,v)\in W^s(0,0)$, then, we have $$X(t,x,v)+V(t,x,v)=e^t\int_t^{\infty}\frac{1}{e^{t'}}\mu\nabla_x\phi(t',X(t',x,v))dt',$$ by using the representation formula \eqref{formula_charact_flow_x_plus_v} and the characterization \eqref{characterization_stable_manifold} of $W^s(0,0)$. By change of variables, we have 
\begin{align*}
e^t\int_t^{\infty}\frac{1}{e^{t'}}\mu\nabla_x\phi(t',X(t',x,v))dt'&=e^t\int_0^{\infty}\frac{1}{e^{t+t'}}\mu\nabla_x\phi(t+t',X(t+t',x,v))dt'\\
&=\int_0^{\infty}\frac{1}{e^{t'}}\mu\nabla_x\phi(t+t',X(t+t',x,v))dt'. 
\end{align*}
Thus $(X(t,x,v),V(t,x,v))\in W^s(0,0)$ since $$X(t,x,v)+V(t,x,v)=\int_0^{\infty}\frac{1}{e^{t'}}\mu\nabla_x\phi(t+t',X(t+t',x,v))dt'.$$ In other words, $W^s(0,0)$ is invariant.

\textbf{Manifold structure of $W^s(0,0)$.} We define the maps $\Psi:\R^n_x\times\R^n_v\to\R^n$ and $\Phi:\R^n_x\times\R^n_v\to\R^n$, given by 
\begin{align*}
\Psi(x,v):=x+v-\Phi(x,v),\qquad \Phi(x,v):=\int_0^{\infty} \frac{1}{e^{t'}}\mu\nabla_x\phi(t',X(t',x,v))dt'.
\end{align*}

We have proved in \eqref{limits_well_defined_stable_mfld_proof}-\eqref{smallness_limits_well_defined_stable_mfld} that $\Phi$ is a well-defined map such that $|\Phi(x,v)|\leq \e^{\frac{1}{2}}$. In particular, the map $\Psi$ is also well-defined. In the following, we show that $\Psi$ and $\Phi$ are maps in the class $C^{N-n-1}$. We obtain the proposition by proving $\det[\partial_{x^j}\Psi_i](x,v)\neq 0$ for every $(x,v)\in \{(x,v):\Psi(x,v)=0\}$, and then applying the implicit function theorem.

\begin{claim}\label{claim_estimate_derivative_x_stable_mfld}
For every $(t,x,v)\in [0,\infty)\times \R^n_x\times \R^n_v$ and every $i\in \{1,2,\dots, n\}$, we have 
\begin{equation}\label{estimate_derivative_x_upper_bound}
|\partial_{x^i}X(t,x,v)|\leq (1+2\e^{\frac{1}{2}})e^t,\qquad |\partial_{v^i}X(t,x,v)|\leq (1+2\e^{\frac{1}{2}})e^t.
\end{equation}
\end{claim}

\begin{proof}
By the formula \eqref{characteristic_flow_x}, the derivatives $\partial_{x^i}X(t)$, $\partial_{v^i}X(t)$ satisfy
\begin{align}
\partial_{x^i}X(t)&=\cosh t+\frac{1}{2e^t}\int_0^t e^{t'}\mu\nabla_x(\partial_{x^i}\phi)(t',X(t'))\partial_{x^i}X (t')dt'\label{identity_derivative_x_flow_map}\\
&\qquad \qquad-\frac{e^t}{2}\int_0^t \frac{1}{e^{t'}}\mu\nabla_x(\partial_{x^i}\phi)(t',X(t'))\partial_{x^i}X(t')dt',\nonumber\\
\partial_{v^i}X(t)&=\sinh t+\frac{1}{2e^t}\int_0^t e^{t'}\mu\nabla_x(\partial_{x^i}\phi)(t',X(t'))\partial_{v^i}X (t')dt'\label{identity_derivative_v_flow_map}\\
&\qquad \qquad-\frac{e^t}{2}\int_0^t \frac{1}{e^{t'}}\mu\nabla_x(\partial_{x^i}\phi)(t',X(t'))\partial_{v^i}X(t')dt',\nonumber
\end{align}
In particular, $\partial_{x^i}X(0,x,v)=1$ and $\partial_{x^i}X(0,x,v)=0$ satisfy \eqref{estimate_derivative_x_upper_bound} for every $(0,x,v)\in [0,\infty)\times\R^n_x\times\R^n_v$. The proof of the estimate for $\partial_{x^i}X$ follows by a continuity argument. Let 
\begin{equation}\label{continuity_argument_estimate_derivative_x_stable_mfld}
T:=\sup\Big\{t\geq 0: |\partial_{x^i}X(s,x,v)|\leq (1+2\e^{\frac{1}{2}})e^s \text{ for every } s\in [0,t]\Big\}.
\end{equation}
Using the bootstrap assumption in \eqref{identity_derivative_x_flow_map}, we have
\begin{align*}
|\partial_{x^i}X(t)|&\lesssim e^t+(1+2\e^{\frac{1}{2}})\frac{\e^{\frac{1}{2}}t}{2e^t}+(1+2\e^{\frac{1}{2}})\frac{\e^{\frac{1}{2}}e^t}{2}\int_0^t \frac{dt'}{e^{2t'}}\\
&\lesssim (1+\e^{\frac{1}{2}}+2\e)e^t\lesssim \Big(1+\frac{3}{2}\e^{\frac{1}{2}}\Big)e^t.
\end{align*}
Therefore, the supremum \eqref{continuity_argument_estimate_derivative_x_stable_mfld} is infinite, and we obtain the desired estimate for $\partial_{x^i}X$. The same argument proves the estimate for $\partial_{v^i}X$.
\end{proof}

By Claim \ref{claim_estimate_derivative_x_stable_mfld}, for every $t_1\geq t_2$ we have
\begin{align*}
\Big|\int_0^{t_1} \frac{1}{e^{t'}}\mu\nabla_x(\partial_{x^i}\phi)(t',X(t'))\partial_{x^i}X(t') dt'-\int_0^{t_2}& \frac{1}{e^{t'}}\mu\nabla_x(\partial_{x^i}\phi)(t',X(t'))\partial_{x^i}X(t') dt'\Big|\\
&\lesssim \Big|\int_{t_2}^{t_1} \frac{1}{e^{t'}}\nabla_x(\partial_{x^i}\phi)(t',X(t'))\partial_{x^i}X(t') dt'\Big|\\
&\lesssim \e^{\frac{1}{2}}(1+2\e^{\frac{1}{2}})\int_{t_2}^{t_1} \frac{dt'}{e^{2t}}\lesssim \e^{\frac{1}{2}}\frac{1}{e^{2t_2}}.
\end{align*}
Thus, the limit 
\begin{equation}\label{limits_well_defined_first_deriv_stable_mfld_proof}
\int_0^{\infty} \frac{1}{e^{t'}}\mu\nabla_x(\partial_{x^i}\phi)(t',X(t'))\partial_{x^i}X(t') dt',
\end{equation}
is a well-defined real value such that 
\begin{equation}\label{smallness_limits_first_deriv_well_defined_stable_mfld}
\Big| \int_0^{\infty} \frac{1}{e^{t'}}\mu\nabla_x(\partial_{x^i}\phi)(t',X(t'))\partial_{x^i}X(t') dt' \Big|\lesssim \e^{\frac{1}{2}}.
\end{equation}
Thus, the integral $$\int_0^{t} \frac{1}{e^{t'}}\mu\nabla_x(\partial_{x^i}\phi)(t',X(t',x,v))\partial_{x^i}X(t',x,v)dt'$$ converges uniformly with respect to $(x,v)\in \R^n_x\times\R^n_v$. Using the continuity of the derivative $\partial_{x^i}(\nabla_x\phi(t',X(t',x,v)))=\nabla_x(\partial_{x^i}\phi)(t',X(t',x,v))\partial_{x^i}X(t',x,v)$ for every $(t,x,v)\in [0,\infty)\times \R^n_x\times\R^n_v$, then
\begin{equation}
\partial_{x^i}\Phi(x,v)=\int_0^{\infty} \frac{1}{e^{t'}}\mu\nabla_x(\partial_{x^i}\phi)(t',X(t',x,v))\partial_{x^i}X(t',x,v)dt'
\end{equation}
is well-defined. Furthermore, the estimate $|\partial_{x^i}\Phi(x,v)|\lesssim \e^{\frac{1}{2}}$ holds. The same argument shows that $\partial_{v^i}\Phi(x,v)$ is well-defined and $|\partial_{v^i}\Phi(x,v)|\lesssim \e^{\frac{1}{2}}$. We have proved that $\Phi$ and $\Psi$ are maps of class $C^1$. Next, we proceed to show that $\Phi$ and $\Psi$ are actually maps of class $C^{N-n-1}$. 

In the following claim, we will use the multivariate Faà di Bruno formula \cite[Theorem 2.1]{CS96} to estimate the partial derivatives of $\nabla_x\phi(t,X(t))$. For this purpose, we introduce a linear order in $\N^{2n}_0$. If $\mu=(\mu_1,\dots,\mu_{2n})$ and $\nu=(\nu_1,\dots,\nu_{2n})$ belong to $\N_0^{2n}$, we write $\mu\prec \nu$ provided one of the following holds:
\begin{enumerate}[label = (\roman*)]
\item $|\mu|\leq |\nu|$.
\item $|\mu|= |\nu|$ and $\mu_1<\nu_1$.
\item $|\mu| = |\nu|$, $\mu_1=\nu_1$, \dots, $\mu_k=\nu_k$, and $\mu_{k+1}<\nu_{k+1}$ for some $1\leq k\leq 2n$.
\end{enumerate}

\begin{claim}\label{claim_estimate_higher_derivative_x_stable_mfld}
For every $(t,x,v)\in [0,\infty)\times \R^n_x\times \R^n_v$ and every $2\leq |\alpha|\leq N-n-1$, we have 
\begin{equation}\label{estimate_derivative_higher_x_upper_bound}
|\partial^{\alpha}_{x,v}X(t,x,v)|\leq (1+2\e^{\frac{1}{2}})e^t.
\end{equation}
\end{claim}

\begin{proof}
By the formula \eqref{characteristic_flow_x}, the derivative $\partial^{\alpha}_{x,v}X(t)$ satisfies
\begin{equation}\label{formula_higher_derivatives_x_characteristics}
\partial_{x,v}^{\alpha}X(t)=\frac{1}{2e^t}\int_0^t e^{t'}\mu\partial_{x,v}^{\alpha}(\nabla_x\phi(t',X(t')))dt'-\frac{e^t}{2}\int_0^t \frac{1}{e^{t'}}\mu\partial_{x,v}^{\alpha}(\nabla_x\phi(t',X(t')))dt'.
\end{equation}
In particular, $\partial_{x,v}^{\alpha}X(0,x,v)=0$ satisfies \eqref{estimate_derivative_higher_x_upper_bound} for every $(0,x,v)\in [0,\infty)\times\R^n_x\times\R^n_v$. Suppose that \eqref{estimate_derivative_higher_x_upper_bound} holds for every derivative $\partial_{x,v}^{\beta}X$ with $|\beta|< |\alpha|$. If $|\alpha|=2$ the estimate \eqref{estimate_derivative_higher_x_upper_bound} holds for every $\partial_{x,v}^{\beta}X$ with $|\beta|< 2$ by Claim \ref{claim_estimate_derivative_x_stable_mfld}. The proof of the estimate \eqref{estimate_derivative_higher_x_upper_bound} follows by a continuity argument. Let 
\begin{equation}\label{continuity_argument_estimate_higher_derivative_x_stable_mfld}
T:=\sup\Big\{t\geq 0: |\partial_{x,v}^{\alpha}X(s,x,v)|\leq (1+2\e^{\frac{1}{2}})e^s \text{ for every } s\in [0,t], \text{ and every } ||\Big\}.
\end{equation}
By the multivariate Faà di Bruno formula \cite[Theorem 2.1]{CS96}, we have 
\begin{equation}\label{formula_faa_di_bruno}
\partial_{x,v}^{\alpha}(\nabla_x\phi(t,X(t)))=\sum_{1\leq |\lambda|\leq |\alpha|}\nabla_x\partial_x^{\lambda}\phi(t,X(t))\sum_{s=1}^{|\alpha|}\sum_{p_s(\alpha,\lambda)}(\alpha !)\prod_{j=1}^s \frac{\prod_{i=1}^{2n}(\partial_{x,v}^{l_j}X^i)^{k^i_j}}{k_j! (l_j!)^{|k_j|}},
\end{equation}
where 
\begin{align*}
p_s(\alpha,\lambda)=\Big\{(k_1,\dots,k_s; l_1,\dots,l_s)\in (\N_0^{2n})^{2s}: |k_i|&>0, \quad 0\prec l_1 \prec \dots, \prec l_s,\\  
&\quad \quad\sum_{i=1}^s k_i= \lambda,  \quad \sum_{i=1}^s |k_i| l_i=\nu \Big\}.
\end{align*}
Using the bootstrap assumption to estimate the derivative $\partial_{x,v}^{\alpha}(\nabla_x\phi(t,X(t)))$, we have 
\begin{align}
|\partial_{x,v}^{\alpha}(\nabla_x\phi(t,X(t)))|&\lesssim \e^{\frac{1}{2}}\sum_{1\leq |\lambda|\leq |\alpha|}(1+2\e^{\frac{1}{2}})^{|\lambda|}e^{-t(1+|\lambda|)}e^{t\sum_{j=1}^s|k_j|} \label{estimate_derivative_force_field_along_flow}\\ &\lesssim \e^{\frac{1}{2}}e^{-t}\sum_{1\leq |\lambda|\leq |\alpha|}(1+2\e^{\frac{1}{2}})^{|\lambda|}\lesssim \e^{\frac{1}{2}}e^{-t},\nonumber
\end{align}
where we have used the decay in time of the force field. Applying \eqref{estimate_derivative_force_field_along_flow} in the representation formula \eqref{formula_higher_derivatives_x_characteristics}, we obtain 
\begin{align*}
|\partial_{x,v}^{\alpha}X(t)|&\lesssim \dfrac{t}{e^t}\e^{\frac{1}{2}}+e^t\e^{\frac{1}{2}}\lesssim \Big(1+\frac{3}{2}\e^{\frac{1}{2}}\Big)e^t.
\end{align*}
Therefore, the supremum \eqref{continuity_argument_estimate_higher_derivative_x_stable_mfld} is infinite, and we obtain the desired estimate for $\partial_{x,v}^{\alpha}X(t)$.
\end{proof}

By Claim \ref{claim_estimate_higher_derivative_x_stable_mfld}, for every $t_1\geq t_2$ we have
\begin{align*}
\Big|\int_0^{t_1} \frac{1}{e^{t'}}\mu\partial_{x,v}^{\alpha}(\nabla_x\phi(t',X(t'))) dt'-\int_0^{t_2}& \frac{1}{e^{t'}}\mu\partial_{x,v}^{\alpha}(\nabla_x\phi(t',X(t'))) dt'\Big|\\
&\lesssim \Big|\int_{t_2}^{t_1} \frac{1}{e^{t'}}\partial_{x,v}^{\alpha}(\nabla_x\phi(t',X(t'))) dt'\Big|\\
&\lesssim \e^{\frac{1}{2}}\int_{t_2}^{t_1} \frac{dt'}{e^{2t}}\lesssim \e^{\frac{1}{2}}\frac{1}{e^{2t_2}}.
\end{align*}
Thus, the limit 
\begin{equation}\label{limits_well_defined_higher_stable_mfld_proof}
\int_0^{\infty} \frac{1}{e^{t'}}\mu\partial_{x,v}^{\alpha}(\nabla_x\phi(t',X(t'))) dt',
\end{equation}
is a well-defined real value such that 
\begin{equation}\label{smallness_limits_well_defined_higher_stable_mfld}
\Big| \int_0^{\infty} \frac{1}{e^{t'}}\mu\partial_{x,v}^{\alpha}(\nabla_x\phi(t',X(t'))) dt' \Big|\lesssim \e^{\frac{1}{2}}.
\end{equation}
Thus, the integral $$\int_0^{t} \frac{1}{e^{t'}}\mu\partial_{x,v}^{\alpha}(\nabla_x\phi(t',X(t',x,v)))dt'$$ converges uniformly with respect to $(x,v)\in \R^n_x\times\R^n_v$. Using the continuity of the derivative $\partial_{x,v}^{\alpha}(\nabla_x\phi(t,X(t,x,v)))$ for every $(t,x,v)\in [0,\infty)\times \R^n_x\times\R^n_v$, then
\begin{equation}
\partial_{x,v}^{\alpha}\Phi(x,v)=\int_0^{\infty} \frac{1}{e^{t'}}\mu\partial_{x,v}^{\alpha}(\nabla_x\phi(t',X(t',x,v)))dt'
\end{equation}
is well-defined. Furthermore, the estimate $|\partial_{x,v}^{\alpha}\Phi(x,v)|\lesssim \e^{\frac{1}{2}}$ holds. We have proved that $\Phi$ and $\Psi$ are maps of class $C^{N-n-1}$. 

As a result, for every $(x,v)\in \{(x,v):\Psi(x,v)=0\}$ we have $$\det[\partial_{x^j}\Psi_i](x,v)=\det[\delta_{ij}-\partial_{x^j}\Phi_i](x,v)> 0,$$ since $|\partial_{x^i}\Phi(x,v)|\lesssim \e^{\frac{1}{2}}$. By the implicit function theorem, we conclude that $W^s(0,0)$ is an $n$-dimensional manifold of class $C^{N-n-1}$.
\end{proof}

\begin{corollary}\label{cor_characteriz_trapped_set}
The trapped set $\Gamma_+$ of the characteristic flow \eqref{characteristics_NL_system_section_trap} is equal to the stable manifold of the origin $W^s(0,0)$.
\end{corollary}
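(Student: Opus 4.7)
The plan is to establish the two inclusions $W^s(0,0)\subseteq \Gamma_+$ and $\Gamma_+\subseteq W^s(0,0)$ separately. The first inclusion is essentially immediate: if $(x,v)\in W^s(0,0)$, then by definition $(X(t,x,v),V(t,x,v))\to (0,0)$ as $t\to\infty$, so $\|(X(t),V(t))\|$ is bounded and $(x,v)$ is trapped. The core of the argument is the reverse inclusion.

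For the inclusion $\Gamma_+\subseteq W^s(0,0)$, the key is to exploit the exponentially growing factor in the unstable direction via the representation formula \eqref{formula_charact_flow_x_plus_v}, which I rewrite as
\begin{equation*}
X(t,x,v)+V(t,x,v)=e^t\Big(x+v-\int_0^t \frac{1}{e^{t'}}\mu\nabla_x\phi(t',X(t',x,v))\,dt'\Big).
\end{equation*}
If $(x,v)\in \Gamma_+$, then $\|(X(t),V(t))\|$ remains bounded along the trajectory, hence $|X(t)+V(t)|\leq C$ uniformly in $t$. Dividing through by $e^t$, this forces
\begin{equation*}
x+v-\int_0^t \frac{1}{e^{t'}}\mu\nabla_x\phi(t',X(t',x,v))\,dt'\longrightarrow 0\quad \text{as } t\to\infty.
\end{equation*}

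The next step is to pass to the limit inside the integral. This is where I would invoke the decay of the force field established in the main theorems: $|\nabla_x\phi(t,x)|\lesssim \epsilon^{1/2}/e^t$ uniformly in $x$, so that the integrand is bounded pointwise in absolute value by $C\epsilon^{1/2}/e^{2t'}$, which is integrable on $[0,\infty)$. In particular, the improper integral converges absolutely along every characteristic, giving
\begin{equation*}
x+v=\int_0^{\infty}\frac{1}{e^{t'}}\mu\nabla_x\phi(t',X(t',x,v))\,dt'.
\end{equation*}
By the explicit characterization \eqref{characterization_stable_manifold} of $W^s(0,0)$ obtained in Proposition \ref{prop_stable_mfld_exist_and_propert}, this identity is exactly the condition for $(x,v)$ to belong to $W^s(0,0)$, which concludes the argument.

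The main subtlety in this plan is verifying that the decay estimate for $\nabla_x\phi$ holds uniformly in space so that it is applicable along an arbitrary trajectory starting from a trapped point $(x,v)$, not only along trajectories contained in $W^s(0,0)$; this is already granted by the pointwise space-time decay in part (ii) of Theorems \ref{theorem_stability_vacuum_external_potential} and \ref{theorem_stability_vacuum_external_potential_2D} combined with the representation formula for $\nabla_x\phi$ used in Lemma \ref{lemma_decay_bilinear_terms_commuted_vlasov}. No separate argument is needed for the symmetric formula \eqref{formula_charact_flow_x_minus_v}, since the $e^{-t}$ prefactor automatically produces decay of $X(t)-V(t)$, and thus the convergence $x+v=\Phi(x,v)$ obtained above combined with \eqref{characteristic_flow_x}--\eqref{characteristic_flow_v} yields the stronger statement that $(X(t),V(t))\to (0,0)$ as $t\to\infty$, reconfirming the full inclusion $\Gamma_+\subseteq W^s(0,0)$.
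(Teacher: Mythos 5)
Your argument is essentially the same as the paper's, just phrased as the direct inclusion $\Gamma_+\subseteq W^s(0,0)$ rather than the contrapositive (the paper shows that $(x,v)\notin W^s(0,0)$ implies $|X(t)|,|V(t)|\to\infty$ via the same representation formulae \eqref{characteristic_flow_x}--\eqref{characteristic_flow_v}). Both reduce to the identity \eqref{formula_charact_flow_x_plus_v} plus absolute convergence of the unstable-direction integral, which is already established in the proof of Proposition \ref{prop_stable_mfld_exist_and_propert} using the uniform-in-$x$ exponential decay of $\nabla_x\phi$; you are right that this uniform decay is the crucial ingredient.

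There is one imprecision you should repair. You write that $(x,v)\in\Gamma_+$ forces $\|(X(t),V(t))\|$ to remain bounded uniformly in $t$, but the paper defines trapped as the negation of \emph{escaping to infinity}, i.e.\ the negation of $\|(X(t),V(t))\|\to\infty$. A priori this only gives $\liminf_{t\to\infty}\|(X(t),V(t))\|<\infty$, not uniform boundedness, so "trapped implies bounded" is not immediate (indeed, that trapped points actually have bounded trajectories is itself a consequence of the corollary, so assuming it outright is circular). The fix is minor: pick a sequence $t_k\to\infty$ along which $\|(X(t_k),V(t_k))\|\leq C$, run your division-by-$e^{t_k}$ argument along that sequence to conclude $x+v-\int_0^{t_k}e^{-t'}\mu\nabla_x\phi\,dt'\to 0$, and then use the already-established convergence of the improper integral to deduce $x+v=\int_0^\infty e^{-t'}\mu\nabla_x\phi\,dt'$. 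Alternatively, argue as the paper does: if $x+v\neq\int_0^\infty e^{-t'}\mu\nabla_x\phi\,dt'$, then \eqref{characteristic_flow_x} and \eqref{characteristic_flow_v} force $|X(t)|\to\infty$ and $|V(t)|\to\infty$ directly, which sidesteps the issue entirely.
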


\begin{proof}
By the representation formulae \eqref{characteristic_flow_x} and \eqref{characteristic_flow_v}, if $x+v\neq \int_0^{\infty}e^{-t'}\mu\nabla_x\phi(t',X(t',x,v))dt'$, then $|X(t,x,v)|\to \infty$ and $|V(t,x,v)|\to \infty$. In other words, every $(x,v)\in \R^n_x\times\R^n_v\setminus W^s(0,0)$ escapes to infinity. In contrast, every $(x,v)\in W^s(0,0)$ is trapped by definition.
\end{proof}

\begin{proof}[Proof of Theorem \ref{thm_characterization_trapped_set}]
Apply Proposition \ref{prop_stable_mfld_exist_and_propert} and Corollary \ref{cor_characteriz_trapped_set}.
\end{proof}


\bibliographystyle{alpha}
\bibliography{Bibliography.bib} 

\end{document}